\newcommand\blfootnote[1]{%
  \begingroup
  \renewcommand\thefootnote{}\footnote{#1}%
  \addtocounter{footnote}{-1}%
  \endgroup
}
\newtheorem{theorem}{Theorem}[section]
\newtheorem{definition}[theorem]{Definition}
\newtheorem{example}[theorem]{Example}
\newtheorem{lemma}[theorem]{Lemma}
\newtheorem{proposition}[theorem]{Proposition}
\newtheorem{remark}[theorem]{Remark}
\newenvironment{proof}[1][Proof]{\noindent\textbf{#1.} }{\ \rule{0.5em}{0.5em}}
\begin{document}

\title{On a weighted Trudinger-Moser inequality in $\mathbb{R}^N$}

\author{Emerson Abreu\thanks{Email: eabreu@ufmg.br}\ \ \&\ Leandro G. Fernandes Jr\thanks{Email: lgfernandes@ufmg.br}\\
Departamento de Matem\'atica\\
Universidade Federal de Minas Gerais\\
\bigskip
CP702 - 30123-970 Belo Horizonte-MG, Brazil }
        \date{\vspace{-5ex}}
\maketitle

\begin{abstract}
	
We establish the Trudinger-Moser inequality on weighted Sobolev spaces in the whole space, and for a class of quasilinear elliptic operators in radial form of the type $\displaystyle Lu:=-r^{-\theta}
(r^{\alpha}\vert u'(r)\vert^{\beta}u'(r))',$ where $\theta, \beta\geq 0$ and $\alpha>0$, are constants satisfying some existence conditions. It worth emphasizing that these operators generalize the $p$-
Laplacian and $k$-Hessian operators in the radial case. Our results involve fractional dimensions, a new weighted P\'olya-Szeg{\"o} principle, and a boundness value for the optimal constant in a 
Gagliardo-Nirenberg type inequality.
\end{abstract}
\noindent Key words: weighted Trudinger-Moser inequality, weighted rearrangement, Schwarz symmetrization  \\
\noindent AMS Subject Classification: {26D10, 35B33, 46E30, 46E35}

\section{Introduction}
\label{intro}

\blfootnote{This study was financed by the Coordena\c c\~ao de Aperfei\c coamento de Pessoal de N\'{\i}vel Superior - Brasil (CAPES) and Funda\c c\~ao de Apoio \`a Pesquisa do Estado de Minas Gerais - (FAPEMIG)}
It is well known the classsical Sobolev embedding it holds that the embedding \linebreak$W^{1,p}(\Omega)\hookrightarrow L^{q}(\Omega)$ is continuous for any  \begin{math}p\leq q\leq Np/(N-p)
\end{math}, where \begin{math} p<N \end{math} and \begin{math}\Omega\end{math} a domain contained in \begin{math}\mathbb{R}^{N}\end{math}. Although, the embedding \begin{math}W^{1,N}
(\Omega)\hookrightarrow L^{q}(\Omega)\end{math} is continuous for any \begin{math}N\leq q<\infty\end{math}, \begin{math}W^{1,N}(\Omega)\not\subset L^{\infty}(\Omega)\end{math}.  Motivated by this
 approach Adams \cite{Adams} proved that for every \begin{math}0<\mu\leq 1\end{math} the Sobolev space \begin{math}W^{1,N}(\Omega)(\Omega\,  \mbox{unbounded})\end{math} is embedding in the
  Orlicz space \begin{math}L_{\Psi_{\mu,N}}(\Omega)\end{math}, where \begin{equation*}\label{0.1.0}
\Psi_{\mu,N}(t)=e^{\mu t^{\frac{N}{N-1}}}-\sum_{j=0}^{N-2}\frac{\mu^{j}}{j!}t^{\frac{N}{N-1}j}.
\end{equation*}

Hempel, Morris and Trudinger \cite{Trudingersharp} showed that the best Orlicz space $L_{\Psi}(\Omega)$ for the embedding of $W^{1,N}_{0}(\Omega)$ $($where $\Omega$ is a bounded domain in 
$\mathbb{R}^{N}$ $)$ occurs when $\Psi=\phi:=e^{t^{\frac{N}{N-1}}}-1$. More precisely, the space $W^{1,N}_{0}(\Omega)$ may not be contiuously imbedding in any Orlicz space $L_{\Psi}(\Omega)$
 whose defining function $\Psi$ increases strictly more rapidly than the function $\phi$.

The case when $\Omega$ is a bounded domain was studied by J. Moser in \cite{Moser}, which showed the following sharp result 
\begin{equation}\label{0.1.0.0}
\sup_{u\in W^{1,N}_{0}(\Omega)\backslash\{0\}} \frac{1}{|\Omega|}\int_{\Omega}e^{\mu\left(\frac{\vert u \vert}{\left\Vert\nabla u\right\Vert_{L^{p}}}\right)^{\frac{N}{N-1}}}dx \left\{
\begin{array}{cc}
\leq C(N,\mu), &\mbox{if}\ \mu\leq\mu_{N}\\
&\\
=+\infty, &\mbox{if}\ \mu>\mu_{N},  \\
\end{array}
\right.
\end{equation}
where $\mu_N:=N\omega_{N-1}^{\frac{1}{N-1}}$, $|\Omega|$ is a measure of $\Omega$, $\omega_{N-1}$ is the $(N-1)$-dimensional Lebesque measure of the unit sphere in $\mathbb{R}^{N}$, and
 $C(N,\mu)$ is a positive constant depending only on $N$ and $\mu$.

The case $\Omega=\mathbb{R}^N$, was studied by Ruf in \cite{Ruf} for $N=2$, and Li and Ruf in \cite{Li-Ruf} for $N\geq 3$. In all cases a sharp result as obtained. Namely, there exists $D(N,\mu)$
 which depends only on $N$ and $\mu$ satisfying
\begin{equation}\label{0.1.1}
\int_{\mathbb{R}^{N}}\Psi_{\mu,N}(u)dx\leq D(N,\mu)
\end{equation}
for all $u\in W^{1,N}(\mathbb{R}^{N})$ with $\left\Vert u\right\Vert_{W^{1,N}(\mathbb{R}^{N})}=1$ and $\mu\leq \mu_{N}$. Here, the inequality (\ref{0.1.1}) is not valid if $\mu>\mu_{N}$.

Ishiwata in \cite{Ishiwata} studied the attainability of the best constant 
\begin{equation}\label{0.1.1.0}
d_{N,\mu}:=\sup_{u\in W^{1,N}(\mathbb{R}^{N}):\left\Vert u\right\Vert_{W^{1,N}(\mathbb{R}^{N})}=1}\int_{\mathbb{R}^{N}}\Psi_{N,\mu}(u)dx,
\end{equation}
which is associated with (\ref{0.1.1}) [see section \ref{sec:2}, Theorem \ref{P.0.1.0.1} and Theorem \ref{P.0.1.0.2}]. A similar study was done in \cite{Ishiwata-Nakamura-Wadade} for singular weights.

He used a concentration-compactness type argument, proving that the maximizing sequence for (\ref{0.1.1}) are neither vanishing nor concentrating sequence. He also showed that the functional
 $J(u):=\int_{\mathbb{R}^{2}}\Psi_{2,\mu}(u)dx$ does not have critical points on $M:=\{u\in W^{1,2}(\mathbb{R}^{2}):\left\Vert u \right\Vert_{W^{1,2}(\mathbb{R}^{N})}=1 \}$ for $\mu$ sufficiently small,
  which implies non-existence results in this case.

Our approach for Trundiger-Moser inequality will be done for the class of quasilinear elliptic operators in radial form of the type
\begin{equation*}
Lu:=-r^{-\theta}(r^{\alpha}\vert u'(r)\vert^{\beta}u'(r))',
\end{equation*}
where $\theta, \beta\geq 0$ and $\alpha>0$. See \cite{Clement-Djairo-Enzo,Djairo-Miyagaki} for some problems involving the operator $L$. It worth emphasizing that these operators generalize the $p$
-Laplacian and $k$-Hessian operators in the radial case, more precisely,

\begin{tabular}{ll}
\\
	\noalign{\smallskip}\hline\noalign{\smallskip}
	(i) Laplacian & $\alpha=\theta=N-1$, $\beta=0 $ \\
(ii) $p$-Laplacian $(p\geq 2)$ & $\alpha=\theta=N-1$, $\beta=p-2$ \\
(iii) $k$-Hessian $(1\leq k\leq N)$ & $\alpha=N-k$, $\theta=N-1$, $\beta=k-1$\\
	\noalign{\smallskip}\hline\\
\end{tabular}
                                                   
\noindent where these operators act on the weighted Sobolev spaces 
\[ W^{1,p}_{\alpha,\theta}(0,R):=W^{1,p}((0,R),d\lambda_{\alpha},d\lambda_{\theta})\ \ \mbox{for}\ \ 0<R\leq\infty\]
 defined in section 2. The preposition \ref{P.0.1.0.0}, in section 2 [see Kufner-Opic \cite{Kufner-Opic}] gives us the following Sobolev type continuous embedding for $0<R<+\infty$
\[ W^{1,p}_{\alpha,\theta}(0,R)\hookrightarrow L_{\theta}^{q}(0,R) \, \ \mbox{if} \, \ 1\leq q\leq q^{\star},\ \alpha-\beta-1>0\, \ \mbox{and}\, \ p:=\beta+2,\]
and the number $q^{\star}:=\frac{(1+\theta)(\beta+2)}{\alpha-\beta-1}$ is the critical exponent associated with the weighted Sobolev space $W^{1,p}_{\alpha,\theta}(0,R)$. We would like to emphasize
 that continuity in the above embedding still hold in the following cases $\alpha-\beta-1=0$, $p=\beta+2$ with $1\leq q<\infty$.

As in the classical case, a function in $W^{1,p}_{\alpha,\theta}(0,R)$ (when $\alpha-(p-1)=0$) could have a local singularity, which proves that  \begin{math}W^{1,p}_{p-1,\theta}(0,R)\end{math} 
\begin{math}\not\subset L_{\theta}^{\infty}(0,R)\end{math}.  Motivated by this approach Oliveira and Do \'O \cite{Marcos do O - Oliveira} studied this embedding, and they proved some results on validity
 and attainability of the Trudinger-Moser inequality, for bounded domains see section \ref{sec:2}, Theorem \ref{P.0.1.5}, Theorem \ref{P.0.1.6.1} and Theorem \ref{P.0.1.7}. 

Our goal here is twofold: on the one hand, we prove a Trudinger-Moser type inequality for weighted Sobolev spaces involving fractional dimensions in the unbounded case $(0,\infty)$; and on the other
 hand, we discuss the existence of extremals functions in such inequalities. 

We will replace the constant $c_{\alpha,\theta}$ (wich depends on $\alpha$, $\theta$ and $R$) in Theorem \ref{P.0.1.5} by an uniform constant $d(\alpha,\theta,\mu)$ (wich depends on $\alpha$, 
$\theta$ and $\mu$), by replacing the Dirichlet norm with weight $\Vert u'\Vert_{L_{\alpha}^{p}}$ by the Sobolev norm with weights $\Vert u\Vert_{W^{1,p}_{\alpha,\theta}(0,\infty)}$, in the same spirit of
 the results stated in \cite{Li-Ruf} and \cite{Ruf}. Furthermore, we investigate the compactness on maximizing sequence for such inequalities in the same sense of the results stablished in \cite{Ishiwata}.

Let
\begin{equation*}
A_{p,\mu}(t)=e^{\mu t^{\frac{p}{p-1}}}-\sum_{j=0}^{ \lfloor p\rfloor-1}\frac{\mu^{j}}{j!}t^{\frac{p}{p-1}j},\, \ \mbox{with} \, \ \lfloor p\rfloor\, \ \mbox{the largest integer less than p.}
\end{equation*}
One of our main results is:
\begin{theorem}\label{0.1.9}
	Let $p\geq 2$, $\theta,\alpha\geq 0$ and $\mu>0$ be real numbers such that $\alpha-(p-1)=0$ and $\mu\leq\mu_{\alpha,\theta}$. Then there exists a constant $D(\theta,\alpha,\mu)$ which
	 depends only on $\theta$, $\alpha$ and $\mu$ such that
	\begin{equation}\label{0.1.10}
	\int_{0}^{\infty}A_{p,\mu}(\vert u(x)\vert)d\lambda_{\theta}(x)\leq D(\theta,\alpha,\mu) 
	\end{equation}
	for all $u\in W^{1,p}_{\alpha,\theta}(0,\infty)$ with $\Vert u\Vert_{W^{1,p}_{\alpha,\theta}(0,\infty)}=1$. Furthemore, the inequality (\ref{0.1.10}) fails if $\mu>\mu_{\alpha,\theta}$, that is, for any 
	$\mu>\mu_{\alpha,\theta}$ there exists a sequence $(u_{j})\subset W^{1,p}_{\alpha,\theta}(0,\infty)$ such that
	\begin{eqnarray*}
	\int_{0}^{\infty}A_{p,\mu}\left(\frac{\vert u_{j}(x)\vert}{\Vert u_{j}\Vert_{W^{1,p}_{\alpha,\theta}(0,\infty)}}\right)d\lambda_{\theta}(x)\to\infty \, \ \mbox{as}\,\ j\to\infty.
	\end{eqnarray*}  
\end{theorem}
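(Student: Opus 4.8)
The plan is to reduce the whole-line inequality to the already known bounded-domain estimate (Theorem \ref{P.0.1.5}) by exploiting the full Sobolev norm, in the spirit of the schemes of Ruf \cite{Ruf} and Li--Ruf \cite{Li-Ruf} transported to the weight $d\lambda_\theta$. First I would invoke the new weighted P\'olya--Szeg\"o principle to replace $u$ by its weighted decreasing rearrangement: this preserves both $\int_0^\infty A_{p,\mu}(|u|)\,d\lambda_\theta$ and $\|u\|_{L^p_\theta}$ while not increasing $\|u'\|_{L^p_\alpha}$, so it suffices to treat a nonnegative, nonincreasing $u$ with $\|u\|_{W^{1,p}_{\alpha,\theta}(0,\infty)}=1$. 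Writing $a=\|u'\|_{L^p_\alpha}^p$ and $b=\|u\|_{L^p_\theta}^p$, the constraint reads $a+b=1$, and since $u\not\equiv 0$ forces $b>0$ one always has $a<1$. Because $u$ is nonincreasing, the superlevel set $\{u>1\}$ is an interval $(0,\rho)$, and Chebyshev's inequality gives $\lambda_\theta\big((0,\rho)\big)\le\int_{\{u>1\}}|u|^p\,d\lambda_\theta\le b\le 1$.

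On the tail $[\rho,\infty)$ one has $0\le u\le1$, and since $A_{p,\mu}(t)=\sum_{j\ge\lfloor p\rfloor}\frac{\mu^{j}}{j!}t^{jp/(p-1)}$, its smallest power is $q_0:=\lfloor p\rfloor\,\frac{p}{p-1}>p$ (using $\lfloor p\rfloor>p-1$), whence $A_{p,\mu}(t)\le e^{\mu}\,t^{q_0}$ for $t\in[0,1]$. Thus $\int_\rho^\infty A_{p,\mu}(u)\,d\lambda_\theta\le e^{\mu}\|u\|_{L^{q_0}_\theta}^{q_0}$, and since here $\alpha-(p-1)=0$ is exactly the critical regime in which the embedding $W^{1,p}_{\alpha,\theta}\hookrightarrow L^{q_0}_\theta$ holds for every finite $q_0$ (Proposition \ref{P.0.1.0.0}, with the optimal Gagliardo--Nirenberg constant controlled), this is bounded by $\|u\|_{W^{1,p}_{\alpha,\theta}}=1$. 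On the core $(0,\rho)$ I would truncate, setting $w=(u-1)_+$: then $w$ vanishes at $\rho$, lies in the Dirichlet space on $(0,\rho)$, and $\|w'\|_{L^p_\alpha}^p\le a<1$. Writing $u=w+1$ and using the elementary inequality $(w+1)^{p/(p-1)}\le(1+\varepsilon)w^{p/(p-1)}+C_\varepsilon$ (valid since $p/(p-1)\ge1$, with $C_\varepsilon\sim\varepsilon^{-(p-1)}$), I get $e^{\mu u^{p/(p-1)}}\le e^{\mu C_\varepsilon}e^{\mu(1+\varepsilon)w^{p/(p-1)}}$, and applying Theorem \ref{P.0.1.5} to $w/\|w'\|_{L^p_\alpha}$ controls this whenever $\mu(1+\varepsilon)\,a^{1/(p-1)}\le\mu_{\alpha,\theta}$.

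The hard part will be uniformity in the critical case $\mu=\mu_{\alpha,\theta}$, where the admissible $\varepsilon$ must satisfy $(1+\varepsilon)\,a^{1/(p-1)}\le1$; this forces $\varepsilon\to0$ as $a\to1$, so $C_\varepsilon\to\infty$ and the prefactor $e^{\mu C_\varepsilon}$ blows up. I would resolve this by a dichotomy on $b=1-a$. If $b\ge\delta_0$ for a fixed small $\delta_0$, then $a\le1-\delta_0$ leaves uniform room to choose $\varepsilon$, so $e^{\mu C_\varepsilon}$ is bounded and Theorem \ref{P.0.1.5} yields a contribution $\le e^{\mu C_\varepsilon}c_{\alpha,\theta}\,\lambda_\theta((0,\rho))\le e^{\mu C_\varepsilon}c_{\alpha,\theta}$. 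If instead $b<\delta_0$, then the domain measure $\lambda_\theta((0,\rho))\le b$ is small, and one balances the blow-up against this smallness: choosing $\varepsilon=\varepsilon(b)\to0$ at a controlled rate keeps $e^{\mu C_\varepsilon}\,\lambda_\theta((0,\rho))$ bounded, since the measure factor decays in $b$ while $C_\varepsilon$ grows only polynomially in $\varepsilon^{-1}$. Combining the two regions over both cases produces the uniform constant $D(\theta,\alpha,\mu)$.

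For the sharpness I would exhibit the weighted analogue of the Moser concentrating functions: radial, piecewise-logarithmic profiles $u_j$ tuned to the effective (fractional) dimension determined by $\alpha$ and $\theta$, so that $\|u_j'\|_{L^p_\alpha}=1$ while $\|u_j\|_{L^p_\theta}\to0$, whence $\|u_j\|_{W^{1,p}_{\alpha,\theta}}\to1$. For $\mu>\mu_{\alpha,\theta}$ the exponential term evaluated on the concentration core forces $\int_0^\infty A_{p,\mu}\big(u_j/\|u_j\|_{W^{1,p}_{\alpha,\theta}}\big)\,d\lambda_\theta\to\infty$, exactly as in \cite{Moser,Li-Ruf}; the vanishing of the $L^p_\theta$-mass guarantees that normalizing by the full Sobolev norm rather than by the Dirichlet norm does not affect the divergence. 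This simultaneously yields the bound for $\mu\le\mu_{\alpha,\theta}$ and its failure for $\mu>\mu_{\alpha,\theta}$.
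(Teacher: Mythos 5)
Your overall scheme (P\'olya--Szeg\"o reduction to nonincreasing $u$, splitting into a core where $u$ is large and a tail where $u\le 1$, series estimate on the tail, bounded-domain Theorem \ref{P.0.1.5} on the core) is the same as the paper's, and your tail estimate is sound --- indeed on $\{u\le 1\}$ you only need $u^{q_0}\le u^{p}$ and $\|u\|_{L^p_\theta}^p\le 1$, so the appeal to Proposition \ref{P.0.1.0.0} (stated only for bounded intervals) is unnecessary. The genuine gap is in the core at the critical value $\mu=\mu_{\alpha,\theta}$, which is precisely the nontrivial content of the theorem. Write $b=\|u\|_{L^p_\theta}^p$, $a=1-b$. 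Your admissibility condition $(1+\varepsilon)a^{1/(p-1)}\le 1$ forces $\varepsilon\le c\,b$ for small $b$, hence $C_\varepsilon\ge c\,b^{-1/(p-1)}$ (this is with the correct asymptotics $C_\varepsilon\sim\varepsilon^{-1/(p-1)}$ for the exponent $p/(p-1)$; your stated $C_\varepsilon\sim\varepsilon^{-(p-1)}$ is even larger). The quantity you balance against is only $\lambda_\theta((0,\rho))\le b$, so your claimed bound behaves like
\begin{equation*}
b\,\exp\big(\mu\,c\,b^{-1/(p-1)}\big)\longrightarrow\infty \quad\text{as}\quad b\to 0.
\end{equation*}
The error in the proposal is the sentence asserting this product stays bounded because ``$C_\varepsilon$ grows only polynomially in $\varepsilon^{-1}$'': $C_\varepsilon$ sits \emph{inside the exponential}, so polynomial growth of $C_\varepsilon$ means exponential blow-up of the prefactor, which no polynomial smallness of the measure can compensate. (For $\mu<\mu_{\alpha,\theta}$ your argument does work, since then $\varepsilon$ can be fixed depending only on $\mu$; but that subcritical case is already essentially contained in Theorem \ref{P.0.1.7}.)

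The paper circumvents exactly this obstruction by truncating at a height that \emph{vanishes} with $b$, rather than at the fixed height $1$: it fixes a radius $a$ depending only on $p,\theta$ (Remark \ref{1.7.2}), sets $v=u-u(a)$ on $(0,a)$, and uses Lemma \ref{1.7} to get $u(a)^p\le \frac{1+\theta}{a^{1+\theta}\omega_\theta}\,b$. The cross terms then produce a \emph{multiplicative} loss tied to $b$, namely $u^{p/(p-1)}\le v^{p/(p-1)}(1+\gamma b)+d(a)$ with $\gamma=\frac{1+\theta}{a^{1+\theta}\omega_\theta}$ and $d(a)$ a fixed constant independent of $u$, and this loss is absorbed exactly by the Dirichlet deficit: $w=v(1+\gamma b)^{(p-1)/p}$ satisfies $\|w'\|_{L^p_\alpha(0,a)}^p\le(1+\gamma b)^{p-1}(1-b)\le 1$, because $(1+\gamma t)^{p-1}(1-t)\le 1$ on $[0,1]$ once $a$ is chosen large enough that $\gamma\le 1/(p-1)$ --- this is inequality (\ref{1.9}). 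Theorem \ref{P.0.1.5} then applies to $w$ at the full exponent $\mu\le\mu_{\alpha,\theta}$ with no $\varepsilon$-loss and no blow-up as $b\to0$. That multiplicative absorption, in place of your additive Young-type splitting, is the missing idea. Your sharpness argument, on the other hand, matches the paper's (logarithmic change of variables plus Moser profiles with $\|u_j'\|_{L^p_\alpha}=1$ and $\|u_j\|_{L^p_\theta}\to0$) and is correct as sketched.
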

To state our next results, we need to define the best constant associated with the inequality (\ref{0.1.10}), namely
\begin{eqnarray}\label{0.1.11}
& \displaystyle d(\theta,\alpha,\mu):=\sup_{0\neq u\in W^{1,p}_{\alpha,\theta}(0,\infty)}\int_{0}^{\infty}A_{p,\mu}\left(\frac{\vert u(x)\vert}{\Vert u\Vert_{W^{1,p}_{\alpha,\theta}(0,\infty)}}\right)d
\lambda_{\theta}(x),
\end{eqnarray}
where $\alpha-(p-1)=0$.
\begin{theorem}\label{0.1.12} Under the assumptions of Theorem \ref{0.1.9}, there exists a positive nonincreasing function $u$ in $W^{1,p}_{\alpha,\theta}(0,\infty)$ with $\Vert u\Vert_{W^{1,p}_{\alpha,
\theta}(0,\infty)}=1$ such that 
	\begin{equation*}\label{0.1.13}
	d(\theta,\alpha,\mu)=\int_{0}^{\infty}A_{p,\mu}(\vert u(x)\vert)d\lambda_{\theta}(x),
	\end{equation*}
	in the following cases:
	\begin{itemize}
		\item[$(i)$] $p\geq 3$ and $0<\mu<\mu_{\alpha,\theta}$, 
		\item[$(ii)$] $p=2$ and $\frac{2}{B(2,\theta)}<\mu<\mu_{\alpha,\theta}$.
	\end{itemize} 
	where 	\begin{equation*}\label{0.1.14}
	B(2,\theta)^{-1}:=\inf_{0\neq u\in W^{1,2}_{1,\theta}(0,\infty)}\frac{\left\Vert u'\right\Vert_{L_{1}^{2}(0,\infty)}^{2}\cdot\left\Vert u\right\Vert_{L_{\theta}^{2}(0,\infty)}^{2}}{\left\Vert u\right\Vert_{L_{\theta}
	^{4}(0,\infty)}^{4}}.
	\end{equation*}
\end{theorem}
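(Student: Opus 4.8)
The plan is to follow the concentration--compactness strategy for Trudinger--Moser extremals, in the spirit of Ishiwata \cite{Ishiwata}, adapted to the weighted radial setting. Fix $\mu$ in the relevant range and let $(u_j)\subset W^{1,p}_{\alpha,\theta}(0,\infty)$ be a maximizing sequence for $d(\theta,\alpha,\mu)$, normalized by $\Vert u_j\Vert_{W^{1,p}_{\alpha,\theta}}=1$. First I would invoke the weighted P\'olya--Szeg{\"o} principle to replace each $u_j$ by its symmetric decreasing rearrangement: this leaves $\int_0^\infty A_{p,\mu}(\vert u_j\vert)\,d\lambda_\theta$ and $\Vert u_j\Vert_{L^p_\theta}$ unchanged while not increasing $\Vert u_j'\Vert_{L^p_\alpha}$, so after renormalizing I may assume every $u_j$ is nonnegative and nonincreasing. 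Being bounded in $W^{1,p}_{\alpha,\theta}(0,\infty)$, the sequence admits a subsequence converging weakly to some $u$, strongly in every subcritical $L^q_\theta(0,\infty)$ (the embedding being compact in the borderline case $\alpha-(p-1)=0$), and pointwise almost everywhere.

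The heart of the argument is a dichotomy for $(u_j)$: either it concentrates at the origin, in which case $\limsup_j\int_0^\infty A_{p,\mu}(\vert u_j\vert)\,d\lambda_\theta\le d_\infty$ for an explicit \emph{concentration level} $d_\infty$, or it does not concentrate, in which case a Br\'ezis--Lieb or Vitali-type argument for the exponential nonlinearity yields $\int_0^\infty A_{p,\mu}(\vert u_j\vert)\,d\lambda_\theta\to\int_0^\infty A_{p,\mu}(\vert u\vert)\,d\lambda_\theta$ and $u$ is the sought extremal. Symmetrization is what reduces the full Lions dichotomy to these two alternatives, since a radially nonincreasing sequence can neither split nor leak mass to infinity. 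To compute $d_\infty$ I would test with weighted Moser functions concentrating at $r=0$, for which the gradient part of the norm absorbs the entire unit budget; the strict subcriticality $\mu<\mu_{\alpha,\theta}$ makes the limiting integral finite and pins down $d_\infty$. The vanishing alternative $u\equiv 0$ is excluded because it would force $\int_0^\infty A_{p,\mu}(\vert u_j\vert)\,d\lambda_\theta\to 0<d(\theta,\alpha,\mu)$.

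It then suffices to prove the strict inequality $d(\theta,\alpha,\mu)>d_\infty$ under each set of hypotheses, since this rules out concentration and forces the non-concentrating alternative. For $p\ge 3$ the lowest-order term of $A_{p,\mu}$ is $\tfrac{\mu^{\lfloor p\rfloor}}{\lfloor p\rfloor!}\,t^{\frac{p}{p-1}\lfloor p\rfloor}$ with $\lfloor p\rfloor\ge 3$, and a suitably spread-out (non-concentrating) test function already produces a value exceeding $d_\infty$ for every $\mu\in(0,\mu_{\alpha,\theta})$. For $p=2$ the lowest-order term is the quartic $\tfrac{\mu^2}{2}\,t^4$, so the gain over $d_\infty$ is governed by $\Vert u\Vert_{L^4_\theta}^4$; inserting the weighted Gagliardo--Nirenberg inequality $\Vert u\Vert_{L^4_\theta}^4\ge B(2,\theta)^{-1}\Vert u'\Vert_{L^2_1}^2\Vert u\Vert_{L^2_\theta}^2$ and optimizing over the splitting of the unit budget between $\Vert u'\Vert_{L^2_1}^2$ and $\Vert u\Vert_{L^2_\theta}^2$ shows that $d(\theta,\alpha,\mu)>d_\infty$ precisely when $\mu>2/B(2,\theta)$, which is the origin of the threshold in case $(ii)$. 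Identifying $d_\infty$ sharply and then beating it---especially recovering the exact Gagliardo--Nirenberg threshold in the case $p=2$---is the step I expect to be the main obstacle.

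Once concentration is excluded, $u$ is a maximizer: weak lower semicontinuity of the norm gives $\Vert u\Vert_{W^{1,p}_{\alpha,\theta}}\le 1$, while the identity $\int_0^\infty A_{p,\mu}(\vert u\vert)\,d\lambda_\theta=d(\theta,\alpha,\mu)$ together with the strict monotonicity of $A_{p,\mu}$ rules out $\Vert u\Vert_{W^{1,p}_{\alpha,\theta}}<1$ (rescaling to unit norm would strictly increase the integral beyond the supremum), forcing $\Vert u\Vert_{W^{1,p}_{\alpha,\theta}}=1$. Finally $u$ is nonnegative and nonincreasing as a pointwise limit of such functions, and the strong maximum principle applied to its Euler--Lagrange equation (driven by the operator $L$) upgrades this to $u>0$, completing the proof.
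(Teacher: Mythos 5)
Your overall scheme (symmetrize via the weighted P\'olya--Szeg\"o principle, extract a weak limit, prove a strict inequality that excludes the bad alternative, then pass to the limit) is the right one, and your mechanism for producing the strict inequality in case $(ii)$ --- scaling a Gagliardo--Nirenberg optimizer so as to shift the unit budget between the gradient term and the $L^{p}_{\theta}$ term --- is essentially Proposition \ref{1.14} of the paper. But your dichotomy is set up incorrectly, and the error is fatal as written. You claim that a radially nonincreasing normalized sequence ``can neither split nor leak mass to infinity,'' so that the only enemy is concentration at the origin, and that vanishing is trivially excluded because it ``would force $\int_0^\infty A_{p,\mu}(\vert u_j\vert)\,d\lambda_\theta\to 0$.'' Both claims are false here. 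Nonincreasing normalized sequences can perfectly well vanish by spreading out: the paper's Example \ref{1.12} takes $\phi_n(x)=\lambda_n^{\gamma}\phi(\lambda_n^{\sigma}x)\,(1+\lambda_n^{p\gamma}\lambda_0)^{-1/p}$ with $\lambda_n\to 0$ and $p\gamma=\sigma(1+\theta)$; these are nonincreasing, have unit $W^{1,p}_{\alpha,\theta}$-norm, converge weakly to $0$, and their entire budget migrates into the $L^p_\theta$ part of the norm. Consequently, when $p$ is an integer (so $\lfloor p\rfloor=p-1$ and $\tfrac{p}{p-1}\lfloor p\rfloor=p$), the lowest-order term of $A_{p,\mu}$ gives
\begin{equation*}
\int_0^\infty A_{p,\mu}(\vert \phi_n\vert)\,d\lambda_\theta\;\geq\;\frac{\mu^{p-1}}{(p-1)!}\,\Vert \phi_n\Vert_{L^{p}_{\theta}(0,\infty)}^{p}\;\longrightarrow\;\frac{\mu^{p-1}}{(p-1)!}\;>\;0,
\end{equation*}
so the functional does \emph{not} tend to zero along vanishing sequences; it tends to the positive vanishing level computed in Proposition \ref{1.13}, and excluding vanishing is precisely the hard step of the proof.

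In fact you have the two loss-of-compactness modes playing each other's roles. For strictly subcritical $\mu<\mu_{\alpha,\theta}$, concentration at the origin is harmless: by the Adachi--Tanaka type inequality (Theorem \ref{P.0.1.7}), a sequence with $\Vert u_j'\Vert_{L^p_\alpha}\to 1$ and $\Vert u_j\Vert_{L^p_\theta}\to 0$ satisfies $\int_0^\infty A_{p,\mu}(\vert u_j\vert)\,d\lambda_\theta\leq C_{\mu,p,\theta}\,\Vert u_j\Vert_{L^p_\theta}^p/\Vert u_j'\Vert_{L^p_\alpha}^p\to 0$, so your ``concentration level'' $d_\infty$ equals $0$ and beating it is vacuous --- it can never generate the threshold $2/B(2,\theta)$. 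Indeed, if your scheme were correct it would yield attainment for every small $\mu>0$ when $p=2$, contradicting Theorem \ref{0.1.15}. The correct comparison, which is what the paper proves, is $d(\theta,\alpha,\mu)>d_{nvl}(p,\theta,\mu)=\mu^{p-1}/(p-1)!$, established by exactly your scaling computation (Proposition \ref{1.14}): for $p>2$ it holds for all $\mu\in(0,\mu_{\alpha,\theta}]$, while for $p=2$ it requires $\mu>2/B(2,\theta)$. The remainder of the paper's argument (Lemma \ref{2.1.0}, Lemma \ref{1.11}, Lemma \ref{1.16}, Proposition \ref{1.17}) then shows $(\mu_\infty,\nu_\infty)=(0,0)$, i.e.\ no mass escapes to infinity, so the weak limit is a nontrivial maximizer of unit norm. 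Two smaller points: your Gagliardo--Nirenberg inequality is stated with the inequality reversed --- by definition of $B(2,\theta)$ one has $\Vert u\Vert_{L^4_\theta}^4\leq B(2,\theta)\,\Vert u'\Vert_{L^2_1}^2\,\Vert u\Vert_{L^2_\theta}^2$, so the strict-inequality argument must be run with $v$ equal to (or near) the optimizer, where equality holds; and the hypotheses here do not make $p$ an integer in case $(i)$, so the non-integer case (where the vanishing level is $0$, by Proposition \ref{1.13}) must be treated separately, as the paper does via inequality (\ref{1.13.3}).
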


\begin{theorem}\label{0.1.15}
	Let $p=2$, $\theta\geq 0$ and $\alpha=1$. Then there exists $\mu_{0}$ such that $d(\theta,\alpha,\mu)$ is not achieved for all $0<\mu<\mu_{0}$.
\end{theorem}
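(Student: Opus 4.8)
The plan is to reduce the question, in the spirit of the unweighted case, to a comparison between the supremum $d(\theta,1,\mu)$ and the trivial ``vanishing'' level produced by mass spreading to infinity. With $p=2$ and $\alpha=1$ (so $\alpha-(p-1)=0$) the exponent is $p/(p-1)=2$, and with the stated convention that $\lfloor p\rfloor$ is the largest integer strictly less than $p$ one has $A_{2,\mu}(t)=e^{\mu t^{2}}-1=\sum_{j\ge 1}\frac{\mu^{j}}{j!}t^{2j}$, whose leading term is the quadratic $\mu t^{2}$. Writing the constraint as $\|u\|_{W^{1,2}_{1,\theta}}^{2}=\|u'\|_{L_{1}^{2}}^{2}+\|u\|_{L_{\theta}^{2}}^{2}=1$, I would aim to prove that there is $\mu_{0}>0$ such that, for $0<\mu<\mu_{0}$,
\[
\int_{0}^{\infty}A_{2,\mu}(|u|)\,d\lambda_{\theta}<\mu=d(\theta,1,\mu)\qquad\text{for every }u\text{ with }\|u\|_{W^{1,2}_{1,\theta}}=1,
\]
from which non-attainment is immediate.

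First I would establish the lower bound $d(\theta,1,\mu)\ge\mu$ by a spreading family. Because $\alpha=1$ is the conformal weight, the Dirichlet energy $\|u'\|_{L_{1}^{2}}^{2}$ is invariant under the dilation $u\mapsto u(\cdot/s)$, while $\|u\|_{L_{\theta}^{q}}^{q}$ scales like $s^{\theta+1}$. Taking $u_{s}(r)=c_{s}\,u(r/s)$ normalized so that $\|u_{s}\|_{W^{1,2}_{1,\theta}}=1$ and letting $s\to\infty$ forces $c_{s}^{2}\sim s^{-(\theta+1)}$, whence $\|u_{s}'\|_{L_{1}^{2}}^{2}\to 0$, $\|u_{s}\|_{L_{\theta}^{2}}^{2}\to 1$ and every higher moment $\|u_{s}\|_{L_{\theta}^{2j}}^{2j}\to 0$ for $j\ge 2$. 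Since $\int A_{2,\mu}(u_{s})\,d\lambda_{\theta}=\mu\|u_{s}\|_{L_{\theta}^{2}}^{2}+\sum_{j\ge 2}\frac{\mu^{j}}{j!}\|u_{s}\|_{L_{\theta}^{2j}}^{2j}\to\mu$, the claim follows.

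The crux is the strict upper bound. Subtracting the quadratic term and using $\|u\|_{L_{\theta}^{2}}^{2}=1-\|u'\|_{L_{1}^{2}}^{2}$, the desired inequality is equivalent to $\sum_{j\ge 2}\frac{\mu^{j}}{j!}\|u\|_{L_{\theta}^{2j}}^{2j}<\mu\|u'\|_{L_{1}^{2}}^{2}$. For $j=2$ the weighted Gagliardo--Nirenberg inequality defining $B(2,\theta)$ gives $\frac{\mu^{2}}{2}\|u\|_{L_{\theta}^{4}}^{4}\le\frac{\mu^{2}}{2}B(2,\theta)\|u'\|_{L_{1}^{2}}^{2}\|u\|_{L_{\theta}^{2}}^{2}\le\frac{\mu^{2}}{2}B(2,\theta)\|u'\|_{L_{1}^{2}}^{2}$. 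For $j\ge 3$ the weighted embedding of section~2 yields, by scaling, an interpolation inequality $\|u\|_{L_{\theta}^{2j}}\le C_{j}\|u'\|_{L_{1}^{2}}^{1-1/j}\|u\|_{L_{\theta}^{2}}^{1/j}$, whence $\|u\|_{L_{\theta}^{2j}}^{2j}\le C_{j}^{2j}\|u'\|_{L_{1}^{2}}^{2(j-1)}\|u\|_{L_{\theta}^{2}}^{2}\le C_{j}^{2j}\|u'\|_{L_{1}^{2}}^{2}$, the last step using $j\ge 3$ (so $2(j-1)\ge 2$) and $\|u'\|_{L_{1}^{2}}\le 1$. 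Summing, the left-hand side is at most $\|u'\|_{L_{1}^{2}}^{2}\big(\frac{\mu^{2}}{2}B(2,\theta)+g(\mu)\big)$ with $g(\mu):=\sum_{j\ge 3}\frac{\mu^{j}}{j!}C_{j}^{2j}=O(\mu^{3})$. Choosing $\mu_{0}$ so small that $\frac{\mu^{2}}{2}B(2,\theta)+g(\mu)<\mu$ for $0<\mu<\mu_{0}$, and noting that every admissible $u$ has $\|u'\|_{L_{1}^{2}}^{2}>0$ (constants do not lie in $L_{\theta}^{2}(0,\infty)$), yields the strict inequality for all such $u$ at once.

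Combining the two bounds, for $0<\mu<\mu_{0}$ one obtains simultaneously $d(\theta,1,\mu)=\mu$ and $\int A_{2,\mu}(|u|)\,d\lambda_{\theta}<\mu$ for every $u$ on the constraint sphere, so the supremum is not achieved. I expect the main obstacle to be the uniform tail estimate of the previous paragraph: establishing the weighted interpolation inequalities for all $j\ge 3$ with constants $C_{j}$ mild enough that $g(\mu)$ converges and is $O(\mu^{3})$, so that the smallness of $\mu$ lets the $O(\mu^{2})$ Gagliardo--Nirenberg term and the $O(\mu^{3})$ tail together stay strictly below $\mu$. This is precisely where the exponential nonlinearity and the fractional-dimension weights interact, and where the (likely non-sharp) value of $\mu_{0}$ is produced; recovering the sharp threshold $2/B(2,\theta)$ would require the optimal constants together with a finer, concentration--compactness analysis of maximizing sequences via the weighted P\'olya--Szeg{\"o} principle.
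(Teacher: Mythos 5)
Your route is genuinely different from the paper's. You pin down the supremum as the vanishing level, proving $d(\theta,1,\mu)=\mu$ for small $\mu$ (lower bound by a spreading family, upper bound by a pointwise strict inequality on the constraint sphere), and conclude non-attainment. The paper's proof (Section \ref{sec:6}) instead shows that the functional $F(w)=\int_{0}^{\infty}A_{2,\mu}(|w|)\,d\lambda_{\theta}$ has \emph{no critical point} on the sphere $S=\{v:\Vert v\Vert_{W^{1,2}_{1,\theta}(0,\infty)}=1\}$: through each $u\in S$ it runs the dilation curve $v_{t}=u_{t}/\Vert u_{t}\Vert_{W^{1,2}_{1,\theta}(0,\infty)}$ with $u_{t}(x)=t^{1/2}u(t^{1/(1+\theta)}x)$ and proves $\frac{d}{dt}F(v_{t})\vert_{t=1}<0$ for all $u\in S$ when $\mu$ is small, so no maximizer (indeed no constrained critical point) can exist. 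The two arguments exploit the same scaling family — your spreading family $u_{s}=c_{s}u(\cdot/s)$ is the $t\to 0$ end of the paper's curve — and rest on the same analytic ingredient (see below). Your version buys the extra information $d(\theta,1,\mu)=\mu$ for $0<\mu<\mu_{0}$; the paper's version buys non-existence of critical points, hence non-existence of solutions of the associated Euler--Lagrange equation, which is stronger in a different direction. Both yield a non-sharp $\mu_{0}$.

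The gap — which you flag yourself but do not close — is the claim $g(\mu)=\sum_{j\geq 3}\frac{\mu^{j}}{j!}C_{j}^{2j}=O(\mu^{3})$. The interpolation inequality of Section \ref{sec:7} (or the embedding of Section \ref{sec:2}) gives each constant $C_{j}$ for fixed $j$, but says nothing about its growth in $j$; if $C_{j}^{2j}$ grew faster than $j!\gamma^{-j}$ for every $\gamma>0$, the series would diverge for all $\mu>0$ and your upper bound would collapse, so this is not a technicality. The missing estimate is exactly inequality (\ref{4.0.1}) of the paper, which follows from Theorem \ref{P.0.1.7} (the Adachi--Tanaka-type inequality (\ref{0.1.7.0}) of Oliveira and do \'O) with $p=2$: expanding $A_{2,\gamma}$ in powers and retaining the $j$-th term gives, for any $0<\gamma<2\pi(1+\theta)$,
\[
\Vert u\Vert_{L_{\theta}^{2j}(0,\infty)}^{2j}\;\leq\; C_{\gamma,2,\theta}\,\frac{j!}{\gamma^{j}}\,\Vert u'\Vert_{L_{1}^{2}(0,\infty)}^{2(j-1)}\Vert u\Vert_{L_{\theta}^{2}(0,\infty)}^{2},
\]
i.e. $C_{j}^{2j}\leq C_{\gamma,2,\theta}\,j!/\gamma^{j}$, whence $g(\mu)\leq C_{\gamma,2,\theta}\sum_{j\geq 3}(\mu/\gamma)^{j}=O(\mu^{3})$ for $\mu<\gamma$. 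With this single citation inserted, your argument closes: the $j=2$ term is controlled by $B(2,\theta)$ exactly as you wrote, $\frac{\mu^{2}}{2}B(2,\theta)+g(\mu)<\mu$ for $\mu$ small, and $\Vert u'\Vert_{L_{1}^{2}}>0$ for every nonzero admissible $u$, so $\int_{0}^{\infty}A_{2,\mu}(|u|)\,d\lambda_{\theta}<\mu\leq d(\theta,1,\mu)$ on all of $S$. In short: correct strategy, genuinely different packaging, but incomplete as written; the step you defer cannot be obtained from the fixed-$q$ embeddings alone and is precisely the uniform moment bound that the paper's own proof is built on.
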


To prove (\ref{0.1.0.0}), Moser \cite{Moser} used the well known Schwarz Symmetrization arguments, which provides a radially symmetric function $u^{\#}$ defined on the ball $B_{R}(0)$, where 
$\mathcal{L}^{N}(\Omega)=\mathcal{L}^{N}(B_{R}(0))$ and all the balls $\{x\in B_{R}(0);u^{\#}(x)>t\}$ has the same $\mathcal{L}^{N}$ measure of the sets $\{x\in\Omega;u(x)>t\}$. Furthermore, $u^{\#}$
 satisfies the P\'olya-Szeg\"o inequality.
\begin{equation}\label{0.1.16}
\int_{B_{R}(0)}\vert\nabla u^{\#}\vert^{N}dx\leq\int_{\Omega}\vert\nabla u\vert^{N}dx.
\end{equation}
Thus the prove of (\ref{0.1.0.0}) was reduced to the subset of radially non-increasing symmetric functions. In our case, P\'olya-Szeg\"o inequality for $W^{1,p}_{\alpha,\theta}(0,\infty)$ was not available.
 That was one additional difficulty in this type of problem. See, for instance, \cite{Marcos do O - Oliveira}.

In this paper we present the half weighted Schwarz symmetrization with the goal of work around the problem. Thus, we will reduce again the Trudinger-Moser inequality to non-increasing functions.

The paper is organized as follows. In section $2$, we define some elements and present some previous results about Trudinger-Moser inequality on $W^{1,p}_{p-1,\theta}(0,R)$, where $R<\infty$. 
In section $3$, we prove a new P\'olya-Szeg\"o Principle on $W^{1,p}_{\alpha,\theta}$ using a new class of isoperimetric inequalities on $\mathbb{R}$ with respect to weights $\vert x\vert^{k}$. 
In section $4$, we establish the Trudinger-Moser inequality on $W^{1,p}_{\alpha,\theta}(0,\infty)$, under the assumptions of Theorem \ref{0.1.9}. In the section $5$, we obtain the Theorem \ref{0.1.12}
 studying the compactness of a maximizing sequence $(u_{n})$ for $(\ref{0.1.11})$. In the section $6$, we show the Theorem $\ref{0.1.15}$ proving that the functional $F(u)=\int_{0}^{\infty} A_{2,\mu}
 (\vert u(x)\vert)d\lambda_{\theta}(x)$ does not have criticals points on $\{u\in W^{1,2}_{1,\theta}(0,\infty): \Vert u\Vert_{W^{1,2}_{1,\theta}(0,\infty)}=1\}$. Finally, in the section $7$ we present a brief 
 discourse about Gagliardo-Nirenberg-Sobolev type inequality and we show that $2/B(2,\theta)<2\pi(1+\theta)$, thus the case $(ii)$ of the Theorem $\ref{0.1.12}$ makes sense.

\section{Basics definitions and previous results} \label{sec:2}

Let $0<R\leq +\infty$, $1\leq p<+\infty$ and $\theta\geq 0$. Let us denote by $L_{\theta}^{p}(0,R)$ the weighed Lebesque space defined as the set of all measuable functions $u$ on $(0,R)$ for which
\begin{eqnarray*}
\left\Vert u\right\Vert_{L_{\theta}^{p}(0,R)}:=\left[\int_{0}^{R}|u(x)|^{p}d\lambda_{\theta}(x)\right]^{1/p}<\infty, 
\end{eqnarray*} where 
\begin{eqnarray*}
d\lambda_{\theta}(x)=\omega_{\theta}x^{\theta}dx, \, \ \omega_{\theta}=\frac{2\pi^{\frac{1+\theta}{2}}}{\Gamma\left(\frac{1+\theta}{2}\right)}, \, \mbox{for all}\, \ \theta\geq 0, 
\end{eqnarray*}
with $\Gamma(x)=\int_{0}^{\infty}t^{x-1}e^{-t}dt$ the Gamma Function.
Besides, we denote by
\begin{eqnarray*}
W^{1,p}_{\alpha,\theta}(0,R):=\left\{u\in L_{\theta}^{p}(0,R); u' \in L_{\alpha}^{p}(0,R) \, \mbox{and} \, \lim_{x\to R^{-}} u(x)=0 \right\}
\end{eqnarray*}
and
\begin{eqnarray*}
\left\Vert u\right\Vert_{W^{1,p}_{\alpha,\theta}(0,R)}:=\left(\left\Vert u'\right\Vert_{L_{\alpha}^{p}(0,R)}^{p}+\left\Vert u\right\Vert_{L_{\theta}^{p}(0,R)}^{p}\right)^{\frac{1}{p}}.
\end{eqnarray*}

In the following proposition, see \cite{Kufner-Opic} for more details, we collect some embedding results for the weighted spaces $W^{1,p}_{\alpha,\theta}$, which will be used in this paper.

\begin{proposition}\label{P.0.1.0.0}
	Let $u: (0,R]\to\mathbb{R}$ be an absolutely continuous function. If $R<\infty$, $u(R)=0$ and 
	\begin{itemize}
		\item [$(1)$] for $1\leq\beta+2\leq q<\infty$ one has
		\begin{itemize}
			\item [$(a)$] $\alpha>\beta+1$, $\theta\geq \alpha\frac{q}{\beta+2}-q\frac{(\beta+1)}{\beta+2}-1$, or
			\item [$(b)$]$\alpha\leq\beta+1$, $\theta>-1$.
		\end{itemize}
		\item [$(2)$] for $1\leq q<\beta+2<\infty$ one has
		\begin{itemize}
			\item [$(c)$] $\alpha>\beta+1$, $\theta>\alpha\frac{q}{\beta+2}-q\frac{(\beta+1)}{\beta+2}-1$, or
			\item [$(d)$] $\alpha\leq\beta+1$, $\theta>-1$
		\end{itemize}
	\end{itemize}
	then \begin{equation*}\label{0.1.3}
	\left(\int_{0}^{R}\vert u\vert^{q}x^{\theta}dx\right)^{\frac{1}{q}}\leq C\left(\int_{0}^{R}\vert u'\vert^{\beta+2}x^{\alpha}dx\right)^{\frac{1}{\beta+2}},
	\end{equation*}
	where $C$ is a constant which does not depend on $u$.
\end{proposition}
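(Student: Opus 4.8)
The plan is to reduce the stated two-weight inequality to a one-dimensional weighted Hardy inequality for the conjugate averaging operator, and then invoke the Muckenhoupt--Bradley characterization, checking that hypotheses (a)--(d) are precisely the conditions under which the relevant Muckenhoupt constant is finite. First, since $u$ is absolutely continuous on $(0,R]$ with $u(R)=0$, I would write $u(x)=-\int_x^R u'(t)\,dt$, so that $|u(x)|\le (H^{*}f)(x):=\int_x^R f(t)\,dt$ with $f:=|u'|\ge 0$. Consequently the claim follows once one establishes, with $p:=\beta+2$ and $p':=p/(p-1)$, the Hardy inequality
\begin{equation*}
\left(\int_0^R\Big(\int_x^R f(t)\,dt\Big)^q x^{\theta}\,dx\right)^{1/q}\le C\left(\int_0^R f(t)^{p}t^{\alpha}\,dt\right)^{1/p}
\end{equation*}
for all $f\ge 0$, since the normalizing constants $\omega_{\theta},\omega_{\alpha}$ are absorbed into $C$.

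Next I would recall the classical characterization of this inequality for the conjugate Hardy operator $H^{*}$ (Muckenhoupt for $p=q$, Bradley/Maz'ya--Rozin for $p\neq q$; see Kufner--Opic). For $1<p\le q<\infty$ it holds if and only if
\begin{equation*}
A:=\sup_{0<r<R}\left(\int_0^r x^{\theta}\,dx\right)^{1/q}\left(\int_r^R t^{-\alpha/(p-1)}\,dt\right)^{1/p'}<\infty,
\end{equation*}
while for $1\le q<p<\infty$, with $1/s:=1/q-1/p$, it holds if and only if an analogous integral quantity of Maz'ya--Rozin type, built from the same two factors raised to suitable powers, is finite.

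The third step is the computation with the power weights $x^{\theta}$ and $x^{\alpha}$. Here $\int_0^r x^{\theta}\,dx=r^{\theta+1}/(\theta+1)$, which is finite precisely because $\theta>-1$ in every case, while the behaviour of $\int_r^R t^{-\alpha/(p-1)}\,dt$ as $r\to 0$ splits according to whether $\alpha/(p-1)=\alpha/(\beta+1)$ exceeds, equals, or is below $1$, that is, whether $\alpha>\beta+1$, $\alpha=\beta+1$, or $\alpha<\beta+1$. In the first case this integral grows like $r^{1-\alpha/(\beta+1)}$, so the expression under the supremum equals, up to a constant, the pure power $r^{(\theta+1)/q+(\beta+1-\alpha)/(\beta+2)}$; boundedness as $r\to 0$ is then equivalent to the exponent being nonnegative, which rearranges to exactly condition (a). In the remaining cases the second factor is bounded (for $\alpha<\beta+1$) or grows only logarithmically (for $\alpha=\beta+1$), and the factor $r^{(\theta+1)/q}$ with $\theta>-1$ dominates, giving condition (b). For $q<p$ the finiteness of the Maz'ya--Rozin integral forces the analogous exponent to be strictly positive, which is why (c) and (d) tighten (a) and (b) to strict inequalities.

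The main obstacle I anticipate is the borderline equality case of (a). There the exponent of $r$ is exactly $0$, so the supremum $A$ is still finite and the Hardy inequality holds; but the naive pointwise estimate $|u(x)|\le\|u'\|_{L_{\alpha}^{p}}\big(\int_x^R t^{-\alpha/(p-1)}\,dt\big)^{1/p'}$ followed by integration in $x$ would only yield a logarithmically divergent integral and would therefore demand strict inequality. Thus the sharp statement genuinely requires the Hardy-operator characterization rather than a direct H\"older bound, and one must keep track of the constant $C$ so that it remains independent of $u$.
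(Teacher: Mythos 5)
Your proof is correct and is essentially the paper's own justification: the paper offers no proof of Proposition~\ref{P.0.1.0.0}, simply citing Kufner--Opic's book on Hardy-type inequalities, and your reduction of the embedding to the two-weight Hardy inequality for the conjugate operator $f\mapsto\int_x^R f$, followed by the Muckenhoupt--Bradley (for $q\geq\beta+2$) and Maz'ya--Rozin (for $q<\beta+2$) characterizations with the power weights $x^{\theta}$, $x^{\alpha}$, is exactly the argument that citation stands for. Your exponent computations recovering conditions $(a)$--$(d)$, including the observation that the equality case of $(a)$ genuinely needs the Hardy characterization rather than a pointwise H\"older bound, are accurate (the only unaddressed edge case, $\beta+2=1$, falls outside the paper's standing assumption $\beta\geq 0$).
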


Next, we present a results due to Oliveira and Do \'O \cite{Marcos do O - Oliveira}.

\begin{theorem}\label{P.0.1.5}
	Let $\alpha,\theta\geq 0$ and $p\geq 2$ be real numbers such that $\alpha-(p-1)=0$. Then there exists a constant $c_{\alpha,\theta}$ depending only on $\alpha,\theta$ and $R$ such that
	\begin{equation}\label{P.0.1.6}
	\sup_{u\in W^{1,p}_{\alpha,\theta}(0,R)}\int_{0}^{R}e^{\mu\left(\vert u\vert\right)^{\frac{p}{p-1}}}d\lambda_{\theta}(r)\left\{ 
	\begin{array}{cl}
	\leq c_{\alpha,\theta}, &\ \ \mbox{if}\  \, \ \mu\leq\mu_{\alpha,\theta}:= (1+\theta)\omega_{\alpha}^{\frac{1}{\alpha}}\\
	&\\
	=\infty, &\ \ \mbox{if}\ \, \ \mu>\mu_{\alpha,\theta},
	\end{array}
	\right.	
	\end{equation}
where $\Vert u'\Vert_{L_{\alpha}^{p}}=1$.
\end{theorem}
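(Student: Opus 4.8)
The plan is to adapt Moser's original argument to the weighted measure, the decisive point being that the hypothesis $\alpha-(p-1)=0$ makes his logarithmic change of variables \emph{exact}. Since $|u|^{p/(p-1)}\ge 0$, the integrand $e^{\mu|u|^{p/(p-1)}}$ is nondecreasing in $\mu$, so the supremum in \eqref{P.0.1.6} is nondecreasing in $\mu$; it therefore suffices to prove the bound at the critical value $\mu=\mu_{\alpha,\theta}$ and, separately, to exhibit blow-up for $\mu>\mu_{\alpha,\theta}$. As a warm-up I would record the pointwise estimate from $u(R)=0$: for absolutely continuous $u$ with $\|u'\|_{L^p_\alpha}=1$, Hölder's inequality gives
\[
|u(r)|\le\int_r^R|u'(s)|\,ds\le\Big(\int_r^R|u'|^p s^\alpha\,ds\Big)^{1/p}\Big(\int_r^R s^{-\alpha/(p-1)}\,ds\Big)^{1/p'},
\]
where the exponent $\alpha/(p-1)=1$ exactly because $\alpha=p-1$, so the second factor is $(\ln(R/r))^{1/p'}$ and $|u(r)|^{p/(p-1)}\le\omega_\alpha^{-1/\alpha}\ln(R/r)$. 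This crude bound already yields finiteness for $\mu<\mu_{\alpha,\theta}$, but with a constant degenerating as $\mu\uparrow\mu_{\alpha,\theta}$, which is why the critical case cannot be avoided.

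The substitution I would use is $t=\ln(R/r)$ and $w(t)=u(Re^{-t})$, so that $w(0)=u(R)=0$. A direct computation using $\alpha=p-1$ gives
\[
\int_0^\infty|w'(t)|^p\,dt=\int_0^R|u'(r)|^p r^\alpha\,dr=\omega_\alpha^{-1},
\]
while the target integral transforms into
\[
\omega_\theta\!\int_0^R e^{\mu|u|^{p/(p-1)}}r^\theta\,dr=\omega_\theta R^{1+\theta}\!\int_0^\infty e^{\mu w(t)^{p/(p-1)}-(1+\theta)t}\,dt.
\]
Rescaling by $\tau=(1+\theta)t$ and setting $g(\tau)=\mu^{(p-1)/p}w(\tau/(1+\theta))$ turns the constraint into $\int_0^\infty|g'|^p\,d\tau=(\mu/\mu_{\alpha,\theta})^{p-1}$ and the target into a constant multiple of $\int_0^\infty e^{g^{p/(p-1)}-\tau}\,d\tau$; here I use that $\mu_{\alpha,\theta}^{p-1}=(1+\theta)^{p-1}\omega_\alpha$ since $(p-1)/\alpha=1$. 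At $\mu=\mu_{\alpha,\theta}$ the constraint reads $\int_0^\infty|g'|^p\,d\tau\le 1$ with $g(0)=0$, which is precisely the hypothesis of Moser's one-dimensional lemma; invoking it bounds the last integral by a constant $C(p)$, and tracing constants back gives $c_{\alpha,\theta}=\omega_\theta R^{1+\theta}C(p)/(1+\theta)$, depending only on $\alpha,\theta,R$.

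For the supercritical blow-up I would use the Moser family. In the $t$-variable take $w_n(t)=(\omega_\alpha n)^{-1/p}\min\{t,n\}$, so that $\int_0^\infty|w_n'|^p\,dt=\omega_\alpha^{-1}$, and pull back to $u_n(r)=w_n(\ln(R/r))\in W^{1,p}_{\alpha,\theta}(0,R)$ with $\|u_n'\|_{L^p_\alpha}=1$. On $0<r\le Re^{-n}$ one has $w_n\equiv\omega_\alpha^{-1/p}n^{1/p'}$, hence
\[
\omega_\theta\!\int_0^R e^{\mu|u_n|^{p/(p-1)}}r^\theta\,dr\ge\frac{\omega_\theta R^{1+\theta}}{1+\theta}\exp\!\Big(\big[\mu\omega_\alpha^{-1/\alpha}-(1+\theta)\big]n\Big),
\]
which diverges as $n\to\infty$ whenever $\mu>\mu_{\alpha,\theta}=(1+\theta)\omega_\alpha^{1/\alpha}$.

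The main obstacle is the critical case, and it is concentrated entirely in the sharp constant of Moser's lemma: the crude Hölder bound is lossy exactly at $\mu=\mu_{\alpha,\theta}$ (the resulting integral $\int_0^R r^{-1}\,dr$ diverges), so one cannot sidestep the refined one-dimensional estimate, which exploits that $g$ cannot saturate the Hölder inequality uniformly across all scales. I would also verify that the change of variables is legitimate for merely absolutely continuous $u$, so that $w$ is absolutely continuous with $w(0)=0$. Note that, in contrast with the unbounded case treated later in the paper, \emph{no} symmetrization or weighted P\'olya–Szeg\"o inequality is needed here: the substitution $w(t)=u(Re^{-t})$ transforms the Dirichlet integral exactly and Moser's lemma applies to signed $g$, so monotonicity of $u$ never enters.
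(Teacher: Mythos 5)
Your proof is correct and is essentially the canonical argument: the paper itself does not prove Theorem \ref{P.0.1.5} but quotes it from Oliveira and do \'O \cite{Marcos do O - Oliveira}, whose proof is precisely your logarithmic substitution reducing matters to Moser's one-dimensional lemma, with the same normalization (your $g$ at $\mu=\mu_{\alpha,\theta}$ coincides with the function $w(t)=\omega_{\alpha}^{1/(\alpha+1)}(1+\theta)^{\alpha/(1+\alpha)}u(Re^{-t/(1+\theta)})$ that the paper itself uses in Section \ref{sec:4}, and your Moser family is the one used there for sharpness). The only cosmetic point is that Moser's lemma should be applied to $|g|$ rather than to a possibly signed $g$, which is harmless since $g(0)=0$ and $\int_0^\infty\bigl| |g|'\bigr|^{p}\,d\tau=\int_0^\infty|g'|^{p}\,d\tau$, as you essentially note.
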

They also showed the existence of extremal functions for inequality $(\ref{P.0.1.6})$, as follows 

\begin{theorem}\label{P.0.1.6.1}
Under the assumptions of Theorem $\ref{P.0.1.5}$, there are extremal functions for $C_{\alpha,\theta,R}(\mu)$ when $\mu\leq\mu_{\alpha,\theta}$; that is, there exists $u\in W^{1,p}_{\alpha,\theta}(0,R)$
 such that
	\begin{eqnarray*}
	C_{\alpha,\theta,R}(\mu)=\int_{0}^{R}e^{\mu\vert u\vert^{\frac{p}{p-1}}}d\lambda_{\theta}(r),
	\end{eqnarray*}
where \begin{eqnarray*}
C_{\alpha,\theta,R}(\mu):=\sup_{u\in W^{1,p}_{\alpha,\theta}(0,\infty):\Vert u'\Vert_{L_{\alpha}^{p}}=1}\int_{0}^{R}e^{\mu\left(\vert u\vert\right)^{\frac{p}{p-1}}}d\lambda_{\theta}(r).
\end{eqnarray*}
\end{theorem}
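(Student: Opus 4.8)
The plan is to use the direct method of the calculus of variations combined with a concentration--compactness (blow-up) analysis, in the spirit of Carleson--Chang and Li--Ruf. Write $q=p/(p-1)$ and set $C:=C_{\alpha,\theta,R}(\mu)$, which is finite by Theorem~\ref{P.0.1.5}. Choose a maximizing sequence $(u_n)\subset W^{1,p}_{\alpha,\theta}(0,R)$ with $\|u_n'\|_{L_{\alpha}^{p}}=1$ and $\int_{0}^{R} e^{\mu|u_n|^{q}}\,d\lambda_\theta\to C$. First I would normalize the sequence by a weighted rearrangement: replacing each $u_n$ by its non-increasing, non-negative rearrangement leaves $\int_{0}^{R} e^{\mu|u_n|^{q}}\,d\lambda_\theta$ unchanged, by equimeasurability with respect to $d\lambda_\theta$, and does not increase $\|u_n'\|_{L_{\alpha}^{p}}$, by a weighted P\'olya--Szeg\"o inequality; so, after dividing by the new Dirichlet norm, we may assume every $u_n$ is non-negative, non-increasing and satisfies $\|u_n'\|_{L_{\alpha}^{p}}=1$. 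Since $p>1$ the space is reflexive, and the boundedness of $(u_n)$ together with the compactness of the subcritical embeddings in Proposition~\ref{P.0.1.0.0} (valid for $R<\infty$) yields, along a subsequence, $u_n\rightharpoonup u$ in $W^{1,p}_{\alpha,\theta}(0,R)$, $u_n\to u$ in $L_{\theta}^{s}(0,R)$ for subcritical $s$ and a.e.\ on $(0,R)$, with $u\ge0$ non-increasing and $\|u'\|_{L_{\alpha}^{p}}\le 1$ by weak lower semicontinuity.

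When $\mu<\mu_{\alpha,\theta}$ the argument closes quickly. Fix $\mu'$ with $\mu<\mu'\le\mu_{\alpha,\theta}$; Theorem~\ref{P.0.1.5} applied to each $u_n$ (all of Dirichlet norm $1$) gives the uniform bound $\int_{0}^{R} e^{\mu'|u_n|^{q}}\,d\lambda_\theta\le c_{\alpha,\theta}$, which upgrades the pointwise convergence $e^{\mu|u_n|^{q}}\to e^{\mu|u|^{q}}$ to equi-integrability in $L_{\theta}^{1}$. Hence $\int_{0}^{R} e^{\mu|u_n|^{q}}\,d\lambda_\theta\to\int_{0}^{R} e^{\mu|u|^{q}}\,d\lambda_\theta=C$. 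If $u\equiv0$ this would force $C=\int_{0}^{R} d\lambda_\theta$, contradicting $C>\int_{0}^{R} d\lambda_\theta$ (test with any nonzero admissible function); therefore $u\not\equiv0$ and $u$ is an extremal. A scaling comparison---if $\|u'\|_{L_{\alpha}^{p}}<1$, then $u/\|u'\|_{L_{\alpha}^{p}}$ would be admissible with strictly larger functional---shows $\|u'\|_{L_{\alpha}^{p}}=1$, so that $u$ belongs to the constraint set.

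The critical case $\mu=\mu_{\alpha,\theta}$ is where the main difficulty lies, since the exponential is no longer subcritical and the map $u\mapsto\int_{0}^{R} e^{\mu|u|^{q}}\,d\lambda_\theta$ is not weakly continuous. I would separate two alternatives. If the weak limit is nontrivial, $u\not\equiv0$, then the energy lost in the limit, $1-\|u'\|_{L_{\alpha}^{p}}^{p}$, is either $0$---in which case uniform convexity promotes the weak convergence to $u_n'\to u'$ in $L_{\alpha}^{p}$ and the conclusion is immediate---or strictly positive, in which case a Lions-type concentration lemma built on the sharp constant $\mu_{\alpha,\theta}$ of Theorem~\ref{P.0.1.5} supplies a subcritical margin, hence equi-integrability of $\{e^{\mu|u_n|^{q}}\}$, and one concludes as before that $u$ is an extremal with $\|u'\|_{L_{\alpha}^{p}}=1$. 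The genuine obstacle is to exclude the remaining possibility that the maximizing sequence concentrates, i.e.\ $u\equiv0$ while the mass of $e^{\mu|u_n|^{q}}$ collapses at the origin $x=0^{+}$, the point where the embedding degenerates. To rule this out I would perform a blow-up analysis there: rescaling $u_n$ about its peak on the concentration scale and exploiting the explicit one-dimensional weighted form of $L$, one identifies the limiting value of $\int_{0}^{R} e^{\mu_{\alpha,\theta}|u_n|^{q}}\,d\lambda_\theta$ along any concentrating sequence as an explicit finite level $\beta_0$, of Carleson--Chang type, determined by $\alpha,\theta,R$ and the weighted capacity at the origin. The decisive step is then the strict inequality $C_{\alpha,\theta,R}(\mu_{\alpha,\theta})>\beta_0$, which I would establish by constructing a family of weighted Moser functions $\psi_\varepsilon$ with a logarithmic profile near $0$, normalized by $\|\psi_\varepsilon'\|_{L_{\alpha}^{p}}=1$, and expanding $\int_{0}^{R} e^{\mu_{\alpha,\theta}|\psi_\varepsilon|^{q}}\,d\lambda_\theta$ as $\varepsilon\to0$ to produce a value exceeding $\beta_0$. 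This contradiction forbids concentration and forces $u\not\equiv0$, whereupon the first alternative yields the extremal. Balancing the constants in this test-function expansion against the weight $x^{\theta}$ and the fractional-dimension exponent $1+\theta$ is the technical heart of the argument and the step I expect to be hardest.
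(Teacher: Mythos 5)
First, a remark on the comparison you asked for: this paper contains no proof of Theorem \ref{P.0.1.6.1} at all. It is stated in Section \ref{sec:2} as a known result, imported from Oliveira and do \'O \cite{Marcos do O - Oliveira}, so your attempt can only be judged against that source and against the standard arguments. Your treatment of the subcritical range $\mu<\mu_{\alpha,\theta}$ is essentially complete and correct: the rearrangement step is legitimately covered by the paper's own Theorem \ref{2} (applied with $l=\theta$, $k=(\alpha+(p-1)\theta)/p$, which satisfies $0<k\le l+1$ since $\alpha=p-1\le p+\theta$, after extending functions of $W^{1,p}_{\alpha,\theta}(0,R)$ by zero), equimeasurability with respect to $\lambda_{\theta}$ preserves the functional, and the Vitali argument based on Theorem \ref{P.0.1.5} at an exponent $\mu'\in(\mu,\mu_{\alpha,\theta}]$ plus the scaling normalization forcing $\Vert u'\Vert_{L^{p}_{\alpha}}=1$ and $u\not\equiv 0$ all go through. (One small inaccuracy: Proposition \ref{P.0.1.0.0} asserts only continuity of the embeddings, not compactness; for the a.e.\ convergence you need, it is cleaner to invoke Helly's selection theorem for the non-increasing sequence, or the pointwise bound (\ref{1.5}).)

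The genuine gap is the critical case $\mu=\mu_{\alpha,\theta}$, which is the only part of the theorem that is not routine, and there your proof stops being a proof and becomes a program. The two decisive steps are only announced: (i) that any concentrating normalized maximizing sequence (weak limit $u\equiv 0$) carries $\int_{0}^{R}e^{\mu_{\alpha,\theta}|u_n|^{p/(p-1)}}d\lambda_{\theta}$ to an explicit finite level $\beta_{0}$, and (ii) the strict inequality $C_{\alpha,\theta,R}(\mu_{\alpha,\theta})>\beta_{0}$, to be obtained from an expansion of weighted Moser-type test functions. Neither is carried out, and both are exactly where the Carleson--Chang-type difficulty lives; without them nothing prevents the maximizing sequence from concentrating at the origin, in which case the weak limit is $0$ and your direct method yields no extremal. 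For the record, the proof in \cite{Marcos do O - Oliveira} does not run this blow-up analysis in the original variable: it uses the logarithmic change of variables $w(t)=\omega_{\alpha}^{1/(\alpha+1)}(1+\theta)^{\alpha/(1+\alpha)}u(Re^{-t/(1+\theta)})$ --- the same substitution this paper borrows in the second half of the proof of Theorem \ref{0.1.9}, see (\ref{1.9.2})--(\ref{1.9.4}) --- which transforms the problem into the one-dimensional half-line functional $\int_{0}^{\infty}e^{(\mu/\mu_{\alpha,\theta})|w|^{p/(p-1)}-t}dt$ constrained by $\int_{0}^{\infty}|w'|^{p}dt\le 1$, and attainability is then inherited from the Carleson--Chang-type lemma for that reduced problem, so the concentration level and the test-function computation are packaged into a known result rather than re-derived in the weighted setting. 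To complete your route you must either execute (i) and (ii) directly, with the constants balanced against the weight $x^{\theta}$, or perform this reduction and quote the one-dimensional lemma.
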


In the same spirit of Adachi and Tanaka (see \cite{Adachi-Tanaka}), Oliveira and Do \'O showed the following result

\begin{theorem}\label{P.0.1.7}
Let $\theta,\alpha\geq 0$ and $p\geq 2$ be real numbers such that $\alpha-(p-1)=0$. Then for any $\mu\in (0,\mu_{\alpha,\theta})$ there is a constant $C_{\mu,p,\theta}$ depending only on $\mu$, $p$
 and $\theta$ such that
 \begin{equation}\label{0.1.7.0}
	\int_{0}^{\infty}A_{p,\mu}\left(\frac{\vert u(r)\vert}{\Vert u'\Vert_{L_{\alpha}^{p}(0,\infty)}}\right)d\lambda_{\theta}(r)\leq C_{\mu,p,\theta}\left(\frac{\Vert u\Vert_{L_{\theta}^{p}(0,\infty)}}{\Vert
	 u'\Vert_{L_{\alpha}^{p}(0,\infty)}}\right)^{p}
\end{equation}
for all $u\in W^{1,p}_{\alpha,\theta}(0,R)\backslash\{0\}$. Besides that, for any $\mu\geq\mu_{\alpha,\theta}$ there is a sequence $(u_{j})\subset W^{1,p}_{\alpha,\theta}(0,\infty)$ such that 
$\Vert u_{j}'\Vert_{L_{\alpha}^{p}(0,\infty)}=1$ and 
\begin{eqnarray*}
\frac{1}{\Vert u_{j}'\Vert_{L_{\alpha}^{p}(0,\infty)}}\int_{0}^{\infty}A_{p,\mu}\left(\vert u_{j}(r)\vert\right)d\lambda_{\theta}(r)\to\infty \, \ \mbox{as}\, \ j\to\infty.
\end{eqnarray*}
Where
\begin{equation*}\label{0.1.8}
A_{p,\mu}(t)=e^{\mu t^{\frac{p}{p-1}}}-\sum_{j=0}^{\lfloor p\rfloor-1}\frac{\mu^{j}}{j!} t^{\frac{p}{p-1}j},\, \ \mbox{with} \, \ \lfloor p\rfloor\, \ \mbox{is the largest integer less than p.}
\end{equation*}
\end{theorem}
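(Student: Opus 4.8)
The plan is to reduce to radially non-increasing functions, normalize the weighted Dirichlet term, and then split the integral at the level $u=1$, handling the sublevel and superlevel regions by two entirely different mechanisms. First I would pass to $|u|$ and to its $\lambda_\theta$-nonincreasing rearrangement $u^{\#}$: equimeasurability (layer-cake, since $A_{p,\mu}$ is nonnegative and monotone) gives $\int_0^\infty A_{p,\mu}(|u|)\,d\lambda_\theta=\int_0^\infty A_{p,\mu}(u^{\#})\,d\lambda_\theta$ and $\|u\|_{L_\theta^p}=\|u^{\#}\|_{L_\theta^p}$, while a weighted P\'olya--Szeg\"o inequality yields $\|(u^{\#})'\|_{L_\alpha^p}\le\|u'\|_{L_\alpha^p}$. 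After the homogeneity rescaling $u\mapsto u/\|u'\|_{L_\alpha^p}$, it therefore suffices to prove $\int_0^\infty A_{p,\mu}(u)\,d\lambda_\theta\le C\,\|u\|_{L_\theta^p}^p$ for $u$ nonincreasing, $u(\infty)=0$, and $\|u'\|_{L_\alpha^p}\le 1$, with $C=C(\theta,\alpha,\mu)$.

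The core is a weighted radial lemma. Writing $p'=p/(p-1)$ and using $\alpha=p-1$, the fundamental theorem of calculus together with H\"older against the weight $\omega_\alpha s^\alpha$ gives, for $0<r<R_0$,
\[
u(r)\le u(R_0)+\omega_\alpha^{-1/p}\big(\ln(R_0/r)\big)^{1/p'}.
\]
The logarithm is forced by the identity $\alpha p'/p=1$, which makes the conjugate weight integrate to $\ln(R_0/r)$; this borderline feature is exactly what produces the threshold $\mu_{\alpha,\theta}$. On the sublevel set $\{u\le 1\}$ I would expand $A_{p,\mu}(t)=\sum_{j\ge\lfloor p\rfloor}\frac{\mu^j}{j!}t^{p'j}$ and use $p'\lfloor p\rfloor\ge p$ to deduce $A_{p,\mu}(t)\le e^{\mu}t^{p}$ for $t\in[0,1]$, whence $\int_{\{u\le 1\}}A_{p,\mu}(u)\,d\lambda_\theta\le e^{\mu}\|u\|_{L_\theta^p}^{p}$. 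On the superlevel set $\{u>1\}=(0,r_1)$ with $u(r_1)=1$, I would apply the radial lemma with $R_0=r_1$, the elementary bound $(1+b)^{p'}\le(1+\varepsilon)b^{p'}+C_\varepsilon$, and $A_{p,\mu}(u)\le e^{\mu u^{p'}}$ to obtain $e^{\mu u(r)^{p'}}\le C_\varepsilon\,(r_1/r)^{\mu(1+\varepsilon)\omega_\alpha^{-1/(p-1)}}$. Integrating $r^\theta$ against this power of $r$ over $(0,r_1)$ is finite precisely when $\mu(1+\varepsilon)<(1+\theta)\omega_\alpha^{1/(p-1)}=\mu_{\alpha,\theta}$, which is achievable for small $\varepsilon$ since $\mu<\mu_{\alpha,\theta}$; the outcome is $\int_{\{u>1\}}A_{p,\mu}(u)\,d\lambda_\theta\le C\,\omega_\theta r_1^{1+\theta}$.

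The two pieces are glued by the scale-invariant observation
\[
\omega_\theta r_1^{1+\theta}=(1+\theta)\,\lambda_\theta(\{u>1\})\le(1+\theta)\int_{\{u>1\}}u^{p}\,d\lambda_\theta\le(1+\theta)\,\|u\|_{L_\theta^p}^{p},
\]
which supplies the homogeneous factor $\|u\|_{L_\theta^p}^p$ and a constant depending only on $\theta,\alpha,\mu$ (in particular independent of $R$ and of $u$); undoing the normalization yields (\ref{0.1.7.0}). For sharpness I would use weighted Moser functions: in the variable $t=\ln(1/r)$ one has $\int_0^\infty|u'|^p\,d\lambda_\alpha=\omega_\alpha\int_{\mathbb{R}}|w'|^p\,dt$ and $d\lambda_\theta=\omega_\theta e^{-(1+\theta)t}\,dt$, so the ramp-plateau profiles $w_j$ of height $L_j=\omega_\alpha^{-1/p}T_j^{1/p'}$ and width $T_j\to\infty$, normalized to $\|u_j'\|_{L_\alpha^p}=1$, satisfy $\mu L_j^{p'}=\mu\,\omega_\alpha^{-1/(p-1)}T_j$. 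The plateau then contributes $\sim e^{(\mu-\mu_{\alpha,\theta})\omega_\alpha^{-1/(p-1)}T_j}$ to $\int_0^\infty A_{p,\mu}(u_j)\,d\lambda_\theta$, which diverges for $\mu>\mu_{\alpha,\theta}$; at $\mu=\mu_{\alpha,\theta}$ this integral stays bounded below while $\|u_j\|_{L_\theta^p}^p\to 0$, so the quotient in (\ref{0.1.7.0}) still blows up and no uniform $C_{\mu,p,\theta}$ can exist.

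The main obstacle is not a single estimate but making the constant genuinely uniform -- independent of $R$ and of $u$ -- while recovering the homogeneous right-hand side $\|u\|_{L_\theta^p}^p$. This is what forces the splitting at the fixed level $u=1$ (so the sublevel part is automatically comparable to $\|u\|_{L_\theta^p}^p$) and the $(1+\varepsilon)$-absorption in the superlevel part, whose convergence is governed by exactly the exponent balance $\theta-\mu(1+\varepsilon)\omega_\alpha^{-1/(p-1)}>-1$ that defines $\mu_{\alpha,\theta}$. The most delicate point is the endpoint sharpness at $\mu=\mu_{\alpha,\theta}$, where the divergence is visible only at the level of the quotient and requires tracking that the rearranged Moser mass $\|u_j\|_{L_\theta^p}$ vanishes.
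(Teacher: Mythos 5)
The paper does not prove this statement: Theorem \ref{P.0.1.7} is quoted as a prior result of Oliveira and do \'O \cite{Marcos do O - Oliveira} (in the spirit of Adachi--Tanaka \cite{Adachi-Tanaka}), so there is no internal proof to compare against, and your argument has to be judged on its own. It is correct. Writing $p'=p/(p-1)$, every step checks out: the reduction to nonincreasing functions is legitimized by the paper's own Theorem \ref{2} (take $l=\theta$ and gradient weight $m=\alpha=p-1$, so $k=(p-1)(1+\theta)/p$, which satisfies $0<k\le l+1$), together with equimeasurability of $A_{p,\mu}(|u|)$ under the $\mu_\theta$-symmetrization; the borderline radial estimate $u(r)\le u(R_0)+\omega_\alpha^{-1/p}\bigl(\ln(R_0/r)\bigr)^{1/p'}$ is exactly where $\alpha=p-1$ enters; the splitting at the level $u=1$ works because $p'\lfloor p\rfloor\ge p$ gives $A_{p,\mu}(t)\le e^{\mu}t^{p}$ on $[0,1]$; the $(1+\varepsilon)$-absorption on $\{u>1\}$ produces a power of $r_1/r$ that is $\lambda_\theta$-integrable precisely when $\mu(1+\varepsilon)<\mu_{\alpha,\theta}$; and the gluing $\omega_\theta r_1^{1+\theta}=(1+\theta)\lambda_\theta(\{u>1\})\le(1+\theta)\Vert u\Vert_{L_\theta^p}^p$ yields a constant depending only on $\mu$, $p$, $\theta$. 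Note that the rearrangement step you invoke is the tool this paper develops in Section \ref{sec:3} (and uses for Theorem \ref{0.1.9}); the cited source, by the authors' own remark in the introduction, did not have it available, so your route is if anything cleaner than what the quoted reference could have done.

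One remark on the sharpness half. As transcribed in the paper the statement is defective: the prefactor $1/\Vert u_j'\Vert_{L_\alpha^p}$ equals $1$, and with that literal reading the divergence claim is trivially true for every $\mu>0$ by dilation, since for $\alpha=p-1$ the map $u\mapsto u(\cdot/s)$ preserves $\Vert u'\Vert_{L_\alpha^p}$ while multiplying $\int_0^\infty A_{p,\mu}(|u|)\,d\lambda_\theta$ by $s^{1+\theta}$. The meaningful claim is the failure of inequality (\ref{0.1.7.0}) for $\mu\ge\mu_{\alpha,\theta}$, i.e., blow-up of the quotient, and that is exactly what your Moser-function computation gives: for $\mu>\mu_{\alpha,\theta}$ the plateau contribution $\sim e^{(\mu-\mu_{\alpha,\theta})\omega_\alpha^{-1/(p-1)}T_j}$ already diverges, and at the endpoint $\mu=\mu_{\alpha,\theta}$ your observation that the integral stays bounded below while $\Vert u_j\Vert_{L_\theta^p}^p\to 0$ (like $1/T_j$) is precisely the argument needed to rule out any uniform constant. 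So your proposal proves the intended statement, and at the endpoint proves the correct, stronger version of it.
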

As mentioned in the Introduction, Ishiwata \cite{Ishiwata} studied the attainability of $d_{N,\mu}$ $($\ref{0.1.1.0}$)$ in the classical case. He emphasized the importance of evaluate vanishing behaviour
 on maximizing sequence in unbounded case. Next, the main results in \cite{Ishiwata} are presented. 

\begin{theorem}\label{P.0.1.0.1}
	Let $N\geq 2$ and \begin{eqnarray*}
	B_{2}:=\sup_{0\neq\psi\in W^{1,2}(\mathbb{R}^{2})}\frac{\left\Vert \psi\right\Vert_{L^{4}}^{4}}{\left\Vert \nabla \psi\right\Vert_{L^{2}}^{2}\left\Vert \psi\right\Vert_{L^{2}}^{2}}.
	\end{eqnarray*}
	Then $d_{N,\mu}$ is attained for $0<\mu<\mu_{N}$ if $N\geq 3$ and for $2/B_{2}<\mu\leq\mu_{2}=4\pi$ if $N=2$.
\end{theorem}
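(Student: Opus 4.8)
The plan is to obtain the extremal as the (renormalized) weak limit of a maximizing sequence, controlling the loss of compactness on $\mathbb{R}^N$ by a concentration-compactness dichotomy. I begin with a maximizing sequence $(u_n)\subset W^{1,N}(\mathbb{R}^N)$ for $d_{N,\mu}$ normalized by $\Vert u_n\Vert_{W^{1,N}}=1$ and $\int_{\mathbb{R}^N}\Psi_{N,\mu}(u_n)\to d_{N,\mu}$. Since $\Psi_{N,\mu}$ is increasing and even, Schwarz symmetrization replaces each $u_n$ by its radial nonincreasing rearrangement $u_n^{\#}$: this is equimeasurable with $|u_n|$, so it preserves $\int\Psi_{N,\mu}$ and the $L^N$ norm, while the classical P\'olya-Szeg\"o inequality gives $\Vert\nabla u_n^{\#}\Vert_{L^N}\leq\Vert\nabla u_n\Vert_{L^N}$, whence $\Vert u_n^{\#}\Vert_{W^{1,N}}\leq 1$; renormalizing to norm one only increases the functional. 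I may therefore assume $(u_n)$ radial and nonincreasing, and extract a weak limit $u_n\rightharpoonup u$ in $W^{1,N}$.

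The heart of the matter is to show $u\not\equiv 0$ and that one may pass to the limit under the integral sign. Applying the concentration-compactness principle to the measures $(|\nabla u_n|^N+|u_n|^N)\,dx$, the radial monotone structure rules out dichotomy, leaving vanishing or compactness. To exclude vanishing I use that, if $\sup_y\int_{B_1(y)}|u_n|^N\to 0$, then $u_n\to 0$ in $L^q$ for every subcritical $q$; since the subcritical Trudinger-Moser bound ($\mu\leq\mu_N$) makes $\{e^{\mu u_n^{N/(N-1)}}\}$ uniformly integrable, all terms of $\Psi_{N,\mu}$ beyond the leading one disappear, giving $\limsup_n\int\Psi_{N,\mu}(u_n)\leq \frac{\mu^{N-1}}{(N-1)!}\limsup_n\int|u_n|^N\leq\frac{\mu^{N-1}}{(N-1)!}$ (equal to $\mu$ when $N=2$). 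Vanishing would thus force $d_{N,\mu}$ below this value, so it suffices to verify the strict inequality $d_{N,\mu}>\frac{\mu^{N-1}}{(N-1)!}$. For $N=2$ this is exactly where the hypothesis $\mu>2/B_2$ enters: choosing as test function a rescaled Gagliardo-Nirenberg optimizer, for which $\Vert u\Vert_{L^4}^4=B_2\Vert\nabla u\Vert_{L^2}^2\Vert u\Vert_{L^2}^2$, and using $\Psi_{2,\mu}(t)\geq\mu t^2+\frac{\mu^2}{2}t^4$, one gets with $s=\Vert\nabla u\Vert_{L^2}^2$ the lower bound $\mu(1-s)+\frac{\mu^2}{2}B_2s(1-s)=\mu(1-s)(1+\frac{\mu B_2}{2}s)$, which exceeds $\mu$ for all small $s>0$ precisely when $\mu>2/B_2$. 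For $N\geq 3$ the analogous strict inequality can be produced for the whole range $0<\mu<\mu_N$ without an extra lower bound on $\mu$.

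Once vanishing is excluded, $u\not\equiv 0$, and the radial setting makes the embedding $W^{1,N}(\mathbb{R}^N)\hookrightarrow L^q(\mathbb{R}^N)$ compact for every subcritical $q$ (Strauss radial decay together with local compactness), so $u_n\to u$ in all such $L^q$ and a.e. Passing to the limit in $\int\Psi_{N,\mu}$ then requires $L^1$ convergence of the exponential part, which follows from a.e. convergence plus uniform integrability of $\{e^{\mu u_n^{N/(N-1)}}\}$; in the strictly subcritical regime ($N\geq 3$, or $N=2$ with $\mu<\mu_2$) this uniform integrability is immediate from the Trudinger-Moser bound, and a Brezis-Lieb decomposition lets me conclude $\int\Psi_{N,\mu}(u)=d_{N,\mu}$ with $\Vert u\Vert_{W^{1,N}}=1$. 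Positivity and monotonicity of the extremal are inherited from the symmetrized sequence.

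The step I expect to be the main obstacle is the critical endpoint $N=2$, $\mu=\mu_2=4\pi$, where uniform integrability of $e^{4\pi u_n^2}$ may fail through concentration. There one must rule out a concentrating maximizing sequence by a Carleson-Chang type blow-up analysis, showing that concentration cannot exceed a finite explicit level $L$ and that $d_{2,\mu_2}>L$ via a Moser test-function construction; this strict gap is what secures compactness at criticality. In contrast, in the strictly subcritical range the entire difficulty collapses to excluding vanishing, which is governed solely by the comparison of $\mu$ with $2/B_2$.
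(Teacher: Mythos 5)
First, a point of orientation: the paper never proves this statement --- it is quoted from Ishiwata \cite{Ishiwata} as background --- but it does prove the weighted analogue (Theorem \ref{0.1.12}) in Section \ref{sec:5} by exactly the strategy you outline, so that proof is the natural benchmark. Against it, your skeleton is right: symmetrization plus P\'olya-Szeg\"o, identification of the vanishing level $\mu^{N-1}/(N-1)!$, and the rescaled Gagliardo-Nirenberg optimizer giving $d_{N,\mu}>\mu^{N-1}/(N-1)!$ precisely when $\mu>2/B_2$ for $N=2$ (these are the paper's Propositions \ref{1.13} and \ref{1.14}). The first genuine gap is your claim that ``the radial monotone structure rules out dichotomy.'' That is false: a radial nonincreasing normalized sequence can split its $L^N$-mass between a fixed bump at the origin and a flattening plateau spreading to infinity, and monotonicity does nothing to forbid this. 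This is exactly why the paper (following Ishiwata) introduces the six quantities $\mu_0,\mu_\infty,\nu_0,\nu_\infty,\eta_0,\eta_\infty$, proves the sub-additivity estimate of Lemma \ref{1.16}, and only then concludes $(\mu_0,\nu_0)=(1,d)$ and $(\mu_\infty,\nu_\infty)=(0,0)$ in Proposition \ref{1.17}: the escaping mass contributes at most its leading-order value $\frac{\mu^{N-1}}{(N-1)!}\eta_\infty$ to the functional, and ruling out \emph{partial} escape needs the same strict inequality again, not the radial structure. Your final limit passage inherits the problem: the radial compact embedding is into $L^q$ only for $q>N$ strictly, so the leading term $\int|u_n|^N$ --- which for $N=2$ is the entire first term of $\Psi_{2,\mu}$ --- does not converge by compactness, and no Brezis-Lieb decomposition will give $\Vert u\Vert_{W^{1,N}}=1$ until you have shown that no mass escapes. (The paper's corresponding step, at the end of Section \ref{sec:5}, uses $\mu_\infty=0$ and then a separate scaling argument to force $\Vert u\Vert=1$.)

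The second gap is the endpoint. The statement asserts attainment for $N=2$ up to and including $\mu=\mu_2=4\pi$, and your proposal explicitly defers that case (``the main obstacle''), sketching a Carleson-Chang blow-up analysis without carrying it out. Uniform integrability of $e^{4\pi u_n^2}$ genuinely fails there, so this is not a routine extension: that endpoint is Ruf's theorem \cite{Ruf}, proved by a separate concentration analysis, and it is telling that the paper's own weighted analogue, Theorem \ref{0.1.12}, stops strictly below $\mu_{\alpha,\theta}$. In summary: correct strategy, matching the paper's, but with one incorrectly justified step (dichotomy/mass escape to infinity) and one missing case (the critical exponent for $N=2$) that the stated theorem requires.
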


\begin{theorem}\label{P.0.1.0.2}
	Let $N=2$. If $\mu\ll 1$, then $d_{2,\mu}$ is not attained.
\end{theorem}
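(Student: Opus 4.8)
For $p=2$ one has $\lfloor 2\rfloor-1=0$ and $\tfrac{p}{p-1}=2$, so $A_{2,\mu}(t)=e^{\mu t^{2}}-1$ and the functional in question is $F(u)=\int_{0}^{\infty}(e^{\mu u^{2}}-1)\,d\lambda_{\theta}$, to be maximised over $M:=\{u\in W^{1,2}_{1,\theta}(0,\infty):\|u\|_{W^{1,2}_{1,\theta}}=1\}$. Writing $a:=\|u'\|_{L^{2}_{1}}^{2}$ and $b:=\|u\|_{L^{2}_{\theta}}^{2}$, so that $a+b=1$ on $M$, I would split off the quadratic part:
\[
F(u)=\mu b+E(u),\qquad E(u):=\int_{0}^{\infty}\bigl(e^{\mu u^{2}}-1-\mu u^{2}\bigr)\,d\lambda_{\theta}\geq 0,
\]
and use $b=1-a$ to get $F(u)=\mu-\mu a+E(u)$. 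The plan is to show that for all sufficiently small $\mu$ the level is exactly $d(\theta,1,\mu)=\mu$, while the strict inequality $F(u)<\mu$ holds for \emph{every} $u\in M$. Since the inequality is strict for every admissible competitor, the supremum can never be achieved; this is stronger than, and implies, the absence of maximising critical points of $F$ on $M$ stated in the outline.

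For the lower bound $d(\theta,1,\mu)\geq\mu$ I would exhibit a spreading (vanishing) family: fix a profile $\phi$, set $u_{t}(x)=c_{t}\,\phi(x/t)$, and choose $c_{t}$ so that $\|u_{t}\|_{W^{1,2}_{1,\theta}}=1$. A direct scaling computation using $\alpha=1$ shows that the Dirichlet term scales like $c_{t}^{2}$ (no power of $t$) while the $L^{2}_{\theta}$ term scales like $c_{t}^{2}t^{1+\theta}$; the normalisation then forces $c_{t}^{2}\sim t^{-(1+\theta)}$, whence $a_{t}\to 0$, $b_{t}\to 1$ and $\|u_{t}\|_{L^{4}_{\theta}}\to 0$ as $t\to\infty$, so that $F(u_{t})\to\mu$. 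Together with the upper bound below this pins the level at $d(\theta,1,\mu)=\mu$.

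The heart of the matter is a uniform estimate $E(u)\leq C\mu^{2}a$ on $M$, with $C$ independent of $u$ and of $\mu\leq\mu_{\alpha,\theta}/2$. Starting from the elementary bound $e^{s}-1-s\leq\tfrac{s^{2}}{2}e^{s}$ for $s\geq 0$ with $s=\mu u^{2}$, one obtains
\[
E(u)\leq\frac{\mu^{2}}{2}\int_{0}^{\infty}u^{4}e^{\mu u^{2}}\,d\lambda_{\theta}=\frac{\mu^{2}}{2}\Bigl(\|u\|_{L^{4}_{\theta}}^{4}+\int_{0}^{\infty}u^{4}\bigl(e^{\mu u^{2}}-1\bigr)\,d\lambda_{\theta}\Bigr).
\]
The first summand is controlled by the weighted Gagliardo--Nirenberg inequality defining $B(2,\theta)$, namely $\|u\|_{L^{4}_{\theta}}^{4}\leq B(2,\theta)\,a\,b\leq B(2,\theta)\,a$. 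For the second I would apply H\"older with a fixed pair $(s,s')$, use $(e^{\mu u^{2}}-1)^{s'}\leq e^{s'\mu u^{2}}-1$, and invoke the Trudinger--Moser inequality (Theorem \ref{0.1.9}, legitimate since $s'\mu\leq\mu_{\alpha,\theta}$ for $\mu$ small) to bound $\|e^{\mu u^{2}}-1\|_{L^{s'}_{\theta}}\leq C$ uniformly on $M$; the remaining factor $\|u\|_{L^{4s}_{\theta}}^{4}$ is handled by the higher $L^{q}$ interpolation in the spirit of Proposition \ref{P.0.1.0.0}, which produces a factor $a^{2\delta}$ with $2\delta>1$, hence $\leq a$ on $M$. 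Collecting terms yields $E(u)\leq C\mu^{2}a$.

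Inserting this into the identity gives $F(u)-\mu=-\mu a+E(u)\leq-\mu a\,(1-C\mu)$, which is strictly negative for every $u\in M$, because $a>0$ for all nonzero $u$ (the alternative $u'\equiv 0$ with $u\in L^{2}_{\theta}$ forces $u\equiv 0$), as soon as $\mu<\mu_{0}:=\min\{1/C,\,\mu_{\alpha,\theta}/2\}$. Thus $F(u)<\mu=d(\theta,1,\mu)$ for all $u\in M$, so the best constant is not attained, which is the assertion. I expect the main obstacle to be precisely the uniform estimate $E(u)\leq C\mu^{2}a$, that is, extracting a \emph{full} power of $a=\|u'\|_{L^{2}_{1}}^{2}$ from the nonlinear remainder: a crude bound $E(u)=O(\mu^{2})$ is useless, since along the maximising (vanishing) sequences $a\to 0$, and only a bound degenerating together with $a$ yields the decisive strict inequality. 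Securing it requires the precise interplay between the exponential integrability supplied by the Trudinger--Moser inequality and the gain of derivatives encoded in the weighted Gagliardo--Nirenberg/P\'olya--Szeg\"o machinery developed earlier in the paper.
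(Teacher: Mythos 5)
Your proposal is correct in substance, but it follows a genuinely different route from the one in the paper. Note first that the statement as quoted is Ishiwata's classical theorem; what this paper actually proves is its weighted analogue, Theorem \ref{0.1.15}, in Section \ref{sec:6}, and both Ishiwata's proof and the paper's proceed by showing that $F$ has \emph{no critical points at all} on the unit sphere $S$: for each $u\in S$ one differentiates $F$ along the normalized scaling curve $v_t=u_t/\Vert u_t\Vert_{W^{1,2}_{1,\theta}}$, $u_t(x)=t^{1/2}u(t^{1/(1+\theta)}x)$, and shows $\frac{d}{dt}F(v_t)\vert_{t=1}<0$ for all $u\in S$ when $\mu$ is small, the key input being inequality (\ref{4.0.1}) (a consequence of the Adachi--Tanaka-type Theorem \ref{P.0.1.7}), which bounds $\Vert u\Vert_{L^{2j}_\theta}^{2j}\big/\bigl(\Vert u'\Vert_{L^2_1}^{2}\Vert u\Vert_{L^2_\theta}^{2}\bigr)$ by $C\,j!\,\gamma^{-j}\Vert u'\Vert_{L^2_1}^{2(j-2)}$ for every $j\geq 2$. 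You instead pin down the supremum exactly, $d=\mu$ (the vanishing level), and prove the uniform gap $F(u)\leq\mu-\mu a(1-C\mu)<\mu$ for every competitor; your key estimate $E(u)\leq C\mu^{2}a$ checks out: the elementary bound $e^{s}-1-s\leq\tfrac{s^{2}}{2}e^{s}$, the $B(2,\theta)$ inequality for the $L^{4}_{\theta}$ term, and H\"older plus Theorem \ref{0.1.9} plus weighted interpolation for the tail are all legitimate, and with $1-\gamma=2/q$ (the case $\alpha=p-1$ of (\ref{1.1.1})) the exponent $2\gamma=2-1/s>1$ indeed yields the full factor $a$. In fact (\ref{4.0.1}) would give your estimate in one line, since $E(u)=\sum_{j\geq2}\frac{\mu^{j}}{j!}\Vert u\Vert_{L^{2j}_\theta}^{2j}\leq C\,a\,b\sum_{j\geq2}(\mu/\gamma)^{j}a^{j-2}$. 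As for what each approach buys: the paper's argument yields nonexistence of \emph{all} critical points (hence nonexistence of solutions of the associated Euler--Lagrange equation), which your gap estimate does not deliver --- contrary to your parenthetical remark, strict inequality everywhere rules out maximizers only, not critical points --- while your argument, in return, identifies $d(\theta,1,\mu)=\mu$ exactly and provides a quantitative deficit, which the paper's derivative computation does not. Two small repairs: the interpolation step should cite inequality (\ref{1.1.1}) of Section \ref{sec:7} rather than Proposition \ref{P.0.1.0.0} (which concerns bounded intervals); and since the quoted statement concerns all of $W^{1,2}(\mathbb{R}^{2})$ while your argument lives in the radial/weighted space (the case $\theta=1$), you must add the standard remark that a putative maximizer may be replaced by its Schwarz rearrangement, so that non-attainment among radial functions suffices.
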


\section{P{\'o}lya-Szeg{\"o} Principle on $W^{1.p}_{\alpha,\theta}$}\label{sec:3}

As mentioned in the introduction, we are going to define a half weighted Schwarz symmetrization to prove a P\'olya-Szego Principle, see the inequality $(\ref{0.1.16})$.

We define the measure $\mu_{l}$ by $d\mu_{l}(x)= \vert x\vert^{l}dx$. Besides, if $M\subset\mathbb{R}$ is a measurable set with finite $\mu_{l}$-measure, then let $M^{*}$ denote the interval $(0,R)$
 such that 
\begin{eqnarray*}
\mu_{l}((0,R))=\mu_{l}(M).
\end{eqnarray*}
Further, if $u:\mathbb{R}\longrightarrow\mathbb{R}$ is a measurable function such that 
\begin{eqnarray*}
\mu_{l}\left(\{y\in\mathbb{R};\left\vert u(y)\right\vert>t\}\right)<\infty \, \ \mbox{for all}\,\ t>0,
\end{eqnarray*}
then let $u^{*}$ denote the half weighted Schawarz symmetrization of $u$, or in short, the half $\mu_{l}$-symmetrization of $u$, given by
\begin{equation*}
u^{*}(x)=\sup\left\{t\geq 0; \mu_{l}\left(\left\{y\in \mathbb{R};\left\vert u(y)\right\vert>t\right\}\right)>\mu_{l}(0,x)\right\},
\end{equation*}
for every $x>0$.

\begin{remark}
	The word ``half'' appears here because our symmetrization is a little bit different in three aspects:
	\itemize
	\item[$(i)$] it is defined on $(0,\infty)$;
	\item[$(ii)$] we are comparing the distribution $\rho(t)
	:=\mu_{l}\left(\left\{y\in\mathbb{R};\left\vert u(y)\right\vert>t\right\}\right)$ with the measure of $(0,x)$, instead $B_{\vert x\vert}(0)$;
	\item[$(iii)$] the set $M^{*}$ is a semi ball with the same measure of $M$, instead a ball.
\end{remark}

We will carry out the proof of the next result based on Isoperimetric Inequality on $\mathbb{R}$ with weight $\left\vert x\right\vert^{k}$ [see \cite{Alvino},Theorem 6.1]. Besides, It worth noting that the
 Theorem $8.1$ in \cite{Alvino} do not cover the case $k<l+1$ when $N=1$. For negative values of $k$, the proof is a consequence of the well-known Hardy-Littlewood inequlaity. See also Cabr\'e and
  Ros-Oton \cite{Cabre-Ros-Oton} for monomial weights, and Talenti \cite{Talenti} for some cases when $N\geq2$.

\begin{theorem}\label{2} Let $k,l$ be real numbers satisfying $0<k\leq l+1$. Besides, let $1\leq p<\infty$ and $m:=pk+(1-p)l$. Then there holds
	\begin{equation}\label{2.1}
	\int_{0}^{\infty}\left\vert u'\right\vert^{p}\left\vert x\right\vert^{pk+(1-p)l}dx\geq\int_{0}^{\infty}\left\vert (u^{*})' \right\vert^{p}\left\vert x\right\vert^{pk+(1-p)l}dx,
	\end{equation}
	for every $u\in W^{1,p}_{l,m}(0, \infty)$, where $u^{*}$ denotes the half $\mu_{l}$-symmetrization of $u$.
\end{theorem}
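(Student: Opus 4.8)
The plan is to run the classical P\'olya--Szeg\"o scheme---coarea formula, H\"older's inequality, isoperimetric inequality---but weighted by $|x|^{l}$ (for the rearrangement measure $\mu_{l}$) and by $|x|^{m}$ (for the Dirichlet integral), exploiting the algebraic link $m=pk-(p-1)l$ between the two weights. First I would reduce to $u\ge 0$, since replacing $u$ by $|u|$ leaves both the left-hand side of (\ref{2.1}) and $u^{*}$ unchanged, and I would assume enough regularity (by a standard truncation and approximation argument together with Sard's theorem) so that for almost every level $t>0$ the set $\{x:u(x)=t\}$ is finite and consists of points where $u'\ne 0$. Write $\rho(t):=\mu_{l}(\{u>t\})$ for the weighted distribution function; this is finite because $u\to 0$ at infinity, and by construction $u^{*}$ has the \emph{same} distribution function, with superlevel sets equal to the semi-intervals $(0,R(t))$ determined by $\mu_{l}((0,R(t)))=\rho(t)$.

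The first analytic step is the one-dimensional weighted coarea formula, which gives both
\[-\rho'(t)=\sum_{x:\,u(x)=t}\frac{|x|^{l}}{|u'(x)|}\qquad\text{and}\qquad \int_{0}^{\infty}|u'|^{p}|x|^{m}\,dx=\int_{0}^{\infty}\Big(\sum_{x:\,u(x)=t}|u'(x)|^{p-1}|x|^{m}\Big)\,dt.\]
Next I would apply H\"older's inequality with exponents $p$ and $p'=p/(p-1)$ to the factorization $|x|^{k}=\big(|u'|^{p-1}|x|^{m}\big)^{1/p}\big(|x|^{l}/|u'|\big)^{1/p'}$, whose validity is precisely the identity $m=pk-(p-1)l$. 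Applied termwise over the finite level set, this yields for almost every $t$
\[\sum_{x:\,u(x)=t}|u'|^{p-1}|x|^{m}\ \ge\ \frac{P_{k}(t)^{p}}{(-\rho'(t))^{p-1}},\qquad P_{k}(t):=\sum_{x:\,u(x)=t}|x|^{k},\]
where $P_{k}(t)$ is the $|x|^{k}$-weighted perimeter of the superlevel set $\{u>t\}$.

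The geometric heart of the argument is the weighted isoperimetric inequality in the regime $0<k\le l+1$: among all measurable sets of prescribed $\mu_{l}$-measure, the semi-interval $(0,R)$ minimizes the $|x|^{k}$-weighted perimeter, so that $P_{k}(t)\ge R(t)^{k}=:P_{k}^{*}(t)$. I would invoke Alvino's Theorem~6.1 \cite{Alvino} for this, supplying the parameter range it does not reach (the borderline situation of Theorem~8.1 in \cite{Alvino} for $N=1$, and negative exponents) via the Hardy--Littlewood rearrangement inequality, as indicated before the statement. Since $u^{*}$ is monotone with single-point level sets $\{R(t)\}$, both the coarea identity and the H\"older step collapse to equalities for $u^{*}$, giving $\int_{0}^{\infty}|(u^{*})'|^{p}|x|^{m}\,dx=\int_{0}^{\infty}P_{k}^{*}(t)^{p}(-\rho'(t))^{-(p-1)}\,dt$ with the very same $\rho$ as for $u$. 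Chaining the coarea identity, the H\"older lower bound, and the isoperimetric inequality $P_{k}(t)\ge P_{k}^{*}(t)$ then produces $\int_{0}^{\infty}|u'|^{p}|x|^{m}\,dx\ge\int_{0}^{\infty}|(u^{*})'|^{p}|x|^{m}\,dx$, which is exactly (\ref{2.1}).

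The step I expect to be the main obstacle is making this coarea/distribution machinery rigorous for a general $u\in W^{1,p}_{l,m}(0,\infty)$---establishing the absolute continuity of $\rho$, the finiteness and regularity of almost every level set, and the passage from the smooth/truncated case to the general one by approximation with control on the weighted Dirichlet energy (so that the inequality survives in the limit)---together with the precise form of the weighted isoperimetric inequality, which is the one genuinely nontrivial external input and the reason the hypothesis $0<k\le l+1$ is imposed (it is what forces the symmetric minimizer to be the semi-interval anchored at the origin).
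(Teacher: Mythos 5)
Your proposal is correct and follows essentially the same route as the paper's proof: the weighted coarea formula, H\"older's inequality exploiting the identity $m=pk-(p-1)l$, the Fleming--Rishel identity equating the weighted level-set quantity $\int_{u=t}|x|^{l}/|u'|\,d\mathcal{H}^{0}$ for $u$ and $u^{*}$, and Alvino's weighted isoperimetric inequality (Theorem 6.1 of \cite{Alvino}, with the Hardy--Littlewood supplement for the cases it does not cover). The only cosmetic difference is that you let the H\"older step degenerate gracefully at $p=1$, whereas the paper treats $p=1$ as a separate, direct application of the isoperimetric inequality.
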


\begin{proof}
	Observe that it is sufficient to consider $u$ a non-negative function. Let
	\begin{eqnarray*}
	&\displaystyle I:=\int_{0}^{\infty}\left\vert u'\right\vert^{p}\left\vert x\right\vert^{pk+(1-p)l}dx \, \ \mbox{and}\\
	& \displaystyle I^{*}:=\int_{0}^{\infty}\left\vert (u^{*})'\right\vert^{p}\left\vert x\right\vert^{pk+(1-p)l}dx.
	\end{eqnarray*}
	The Coarea Formula holds 
	
	\begin{eqnarray*}
	&\displaystyle I:=\int_{0}^{\infty}\int_{u=t}\left\vert u'\right\vert^{p-1}\left\vert x\right\vert^{pk+(1-p)l}d\mathcal{H}^{0}(x)dt\, \ \mbox{and}\\
	&\displaystyle I^{*}:=\int_{0}^{\infty}\int_{u^{*}=t}\left\vert (u^{*})'\right\vert^{p-1}\left\vert x\right\vert^{pk+(1-p)l}d\mathcal{H}^{0}(x)dt.
	\end{eqnarray*}
	
If $p=1$, we get
\begin{align*}
	&\displaystyle I:=\int_{0}^{\infty}\int_{u=t}\left\vert x\right\vert^{k}d\mathcal{H}^{0}(x)dt\, \ \mbox{and}\nonumber\\
	&\displaystyle I^{*}:=\int_{0}^{\infty}\int_{u^{*}=t}\left\vert x\right\vert^{k}d\mathcal{H}^{0}(x)dt, 
\end{align*}
hence, we obtain from Isoperimetric Inequality on $\mathbb{R}$ with weight $\left\vert x\right\vert^{k}$ [see \cite{Alvino}, Theorem 6.1] and definition of $u^{\star}$ that
   \begin{align*}
   \int_{u=t}\left\vert x\right\vert^{k}d\mathcal{H}^{0}(x)\geq\int_{u^{*}=t}\left\vert x\right\vert^{k}d\mathcal{H}^{0}(x).
   \end{align*}
  Therefore, $I\geq I^{\star}$ when $p=1$.
  
Now, asssume that $1<p<\infty$.	By Holder's Inequality we have 
\begin{eqnarray*}
	\int_{u=t}\left\vert x\right\vert^{k} dH^{0}(x)&\leq&\left(\int_{u=t}\left\vert x\right\vert^{kp+(1-p)l}\left\vert u'\right\vert^{p-1}d\mathcal{H}^{0}(x)\right)^{\frac{1}{p}}
\left(\int_{u=t}\frac{\left\vert x\right\vert^{l}}{\left\vert u'\right\vert}d\mathcal{H}^{0}(x)\right)^{\frac{p-1}{p}}
	\end{eqnarray*}
	for $a.e$ $t\in [0, \infty)$, thus we get
	\begin{equation}\label{2.2}
	I\geq\int_{0}^{\infty}\left(\int_{u=t}\left\vert x\right\vert^{k}d\mathcal{H}^{0}(x)\right)^{p}\left(\int_{u=t}\frac{\left\vert x\right\vert^{l}}{\left\vert u'\right\vert}d\mathcal{H}^{0}(x)\right)^{1-p}dt.
	\end{equation}
 Since that $\vert(u^{\star})'\vert$ and $\vert x\vert$ are constants along of $\{u^{\star}=t\}$, hence, for $u^{*}$ we obtain the equality, i.e, 
	\begin{equation}\label{2.3}
	I^{*}=\int_{0}^{\infty}\left(\int_{u^{*}=t}\left\vert x\right\vert^{k}d\mathcal{H}^{0}(x)\right)^{p}\left(\int_{u^{*}=t}\frac{\left
		\vert x\right\vert^{l}}{\left\vert (u^{\star})'\right\vert}d\mathcal{H}^{0}(x)\right)^{1-p}dt.
	\end{equation}	
	In addition, by definition of $u^{*}$, we have
	\begin{eqnarray*}
	\displaystyle \int_{u>t}\left\vert x\right\vert^{l}dx=\int_{u^{*}>t}\left\vert x\right\vert^{l}dx,
	\end{eqnarray*}
	and as a consequence of Coarea Formula we get
	\begin{equation}\label{2.4}
	\int_{u=t}\frac{\left\vert x\right\vert^{l}}{\left\vert u'\right\vert}d\mathcal{H}^{0}(x)=	\int_{u^{*}=t}\frac{\left\vert x\right\vert^{l}}{\left\vert (u^{*})'\right\vert}d\mathcal{H}^{0}(x),
	\end{equation}
	for $a.e$ $t\in[0, \infty)$, that is sometimes called Fleming - Rishel's Formula.
	
Again, by Isoperimetric Inequality on $\mathbb{R}$ with weight $\left\vert x\right\vert^{k}$ [see \cite{Alvino},Theorem 6.1] and the definition of $u^{*}$ we obtain
	\begin{equation}\label{2.5}
	\int_{u=t}\left\vert x\right\vert^{k}d\mathcal{H}^{0}(x)\geq\int_{u^{*}=t}\left\vert x\right\vert^{k}d\mathcal{H}^{0}(x).	
	\end{equation}
	Therefore, from (\ref{2.2}), (\ref{2.3}), (\ref{2.4}), and (\ref{2.5}) we have
	\begin{eqnarray*}
	I\geq I^{*},
	\end{eqnarray*}  
	thus, (\ref{2.1}) follows.
\end{proof}

\section{Trudinger-Moser inequality on $W^{1,p}_{\alpha,\theta}(0,\infty)$}\label{sec:4}

In this section, we establish a Trudinger-Moser type inequality on $W^{1,p}_{\alpha,\theta}(0,\infty)$ (Theorem \ref{0.1.9}) via  the P{\'o}lya-Szeg{\"o} Principle presented in section \ref{sec:3}.
 
\begin{lemma}\label{1.7} \begin{enumerate}
\item [(i)]Let $u$ be a function in $W^{1,p}_{\alpha,\theta}(0,\infty)$. Then
\begin{equation}\label{1.7.1}
\vert u(x)\vert^{p}\leq p\omega_{\theta}^{-\frac{p-1}{p}}\omega_{\alpha}^{-\frac{1}{p}}x^{-\frac{(p-1)\theta+\alpha}{p}}\left\Vert u\right\Vert_{L_{\theta}^{p}(0,\infty)}^{p-1}\left\Vert u'\right\Vert_{L_{\alpha}^{p}
(0,\infty)}\ \ \text{for all}\ \ x>0.
\end{equation}
 Consequently, the embedding $W^{1,p}_{\alpha,\theta}(0,\infty)\hookrightarrow L^q_\theta(0,\infty)$ is compact for all $q$ satisfying
 \[\dfrac{p^{2}(1+\theta)}{(p-1)\theta+\alpha}\leq q<\dfrac{p(1+\theta)}{\alpha-(p-1)}:=p^{\star},\] 
	where $\alpha\geq(p-1)$ and $\alpha\leq p+\theta$.
	
\item [(ii)] Let $u\in L_{\theta}^{p}(0,R)$ a nonincreasing function, then 
\begin{equation}\label{1.5}
	|u(x)|\leq \left(\frac{1+\theta}{\omega_{\theta}x^{1+\theta}}\right)^{1/p}\left[\int_{0}^{R}|u(s)|^{p}d\lambda_{\theta}(s)\right]^{1/p}, \, \ \mbox{for all}\, \ 0<x< R.	
\end{equation}	
Hence, if $(u_{n})\subset W^{1,p}_{\alpha,\theta}(0,\infty)$ is a nonincreasing sequence converging weakly to $u$ in $W^{1,p}_{\alpha,\theta}(0,\infty)$, then $u_{n}\to u$ strongly in 
$L^{q}_{\theta}(0,\infty)$, for each $p<q<p^{\star}$ $(\alpha\geq p-1)$.
\end{enumerate}
\end{lemma}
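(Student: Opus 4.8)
The plan is to prove the two pointwise bounds first and then use them, together with local compactness, to upgrade weak convergence to strong convergence in $L^{q}_{\theta}$.

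For the pointwise estimate (\ref{1.7.1}) in part $(i)$ I would exploit the boundary behaviour $\lim_{x\to\infty}u(x)=0$ built into $W^{1,p}_{\alpha,\theta}(0,\infty)$. Assuming $u\geq0$ (which suffices after replacing $u$ by $|u|$), the fundamental theorem of calculus gives
\[
u(x)^{p}=-\int_{x}^{\infty}\frac{d}{ds}\,u(s)^{p}\,ds=-p\int_{x}^{\infty}u(s)^{p-1}u'(s)\,ds\leq p\int_{x}^{\infty}u(s)^{p-1}|u'(s)|\,ds .
\]
I would then insert the weights $s^{\theta/p'}$ and $s^{\alpha/p}$ (with $p'=p/(p-1)$) and factor out the leftover power $s^{-\theta/p'-\alpha/p}$; since $\theta,\alpha\geq0$ this power is nonincreasing, so on $[x,\infty)$ it is bounded by $x^{-\theta/p'-\alpha/p}=x^{-[(p-1)\theta+\alpha]/p}$. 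Applying H\"older's inequality with exponents $p'$ and $p$ to the two weighted factors, using $(p-1)p'=p$, and extending the integrals to $(0,\infty)$ produces exactly $\|u\|_{L^{p}_{\theta}}^{p-1}\|u'\|_{L^{p}_{\alpha}}$ together with the constants $\omega_{\theta}^{-(p-1)/p}\omega_{\alpha}^{-1/p}$, which is (\ref{1.7.1}). The only delicate point is the monotone-weight trick used to pull $x^{-[(p-1)\theta+\alpha]/p}$ out of the integral.

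For the compact embedding in part $(i)$, given a bounded sequence $(u_{n})$ I would extract $u_{n}\rightharpoonup u$ in $W^{1,p}_{\alpha,\theta}(0,\infty)$ and show $u_{n}\to u$ in $L^{q}_{\theta}(0,\infty)$. On every compact interval $[\delta,M]\subset(0,\infty)$ the classical Rellich--Kondrachov theorem yields strong convergence in $L^{q}_{\theta}([\delta,M])$, so the whole issue is ruling out loss of mass at the two ends. At infinity I would use (\ref{1.7.1}) in the form $|u_{n}(x)|\leq C\,x^{-[(p-1)\theta+\alpha]/p^{2}}$ with $C$ uniform in $n$; the $\lambda_{\theta}$-tail $\int_{M}^{\infty}x^{-q[(p-1)\theta+\alpha]/p^{2}}x^{\theta}\,dx$ of this profile is finite and tends to $0$ exactly for $q\geq q_{0}:=p^{2}(1+\theta)/[(p-1)\theta+\alpha]$ (the borderline $q=q_{0}>p$ being reached through the splitting $|u_{n}|^{q}=|u_{n}|^{q-p}|u_{n}|^{p}$ and the uniform $L^{p}_{\theta}$ bound), which is the source of the lower bound on $q$. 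Near the origin, where (\ref{1.7.1}) is too weak, I would instead invoke the continuous Sobolev embedding $W^{1,p}_{\alpha,\theta}(0,\infty)\hookrightarrow L^{p^{\star}}_{\theta}(0,\infty)$ and H\"older's inequality against the finite measure $\lambda_{\theta}(0,\delta)=\omega_{\theta}\delta^{1+\theta}/(1+\theta)$, so that $\int_{0}^{\delta}|u_{n}|^{q}\,d\lambda_{\theta}\leq\|u_{n}\|_{L^{p^{\star}}_{\theta}}^{q}\,\lambda_{\theta}(0,\delta)^{1-q/p^{\star}}\to0$ uniformly as $\delta\to0$ whenever $q<p^{\star}$, which is the source of the upper bound. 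Combining the three regions gives strong convergence, and I expect this reconciliation of decay at infinity with the absence of concentration at the origin to be the main obstacle.

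Part $(ii)$ is analogous but lighter. The bound (\ref{1.5}) is immediate from monotonicity: for nonincreasing $u\geq0$ and $0<s\leq x$ one has $u(s)\geq u(x)$, hence
\[
\int_{0}^{R}|u|^{p}\,d\lambda_{\theta}\geq\int_{0}^{x}u(s)^{p}\,d\lambda_{\theta}(s)\geq u(x)^{p}\,\lambda_{\theta}(0,x)=u(x)^{p}\,\omega_{\theta}\frac{x^{1+\theta}}{1+\theta},
\]
which rearranges to (\ref{1.5}). For the strong convergence statement I would again pass to the nonincreasing weak limit $u$ (monotonicity is preserved under the a.e.\ limit supplied by local compactness), apply Rellich on compact subintervals, control the tail at infinity through (\ref{1.5}) in the form $|u_{n}(x)|\leq C\,x^{-(1+\theta)/p}$ (whose weighted tail is integrable for $q>p$), and control the behaviour near $0$ exactly as in part $(i)$ via the embedding into $L^{p^{\star}}_{\theta}$ for $q<p^{\star}$. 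Assembling these estimates yields $u_{n}\to u$ in $L^{q}_{\theta}(0,\infty)$ for every $p<q<p^{\star}$.
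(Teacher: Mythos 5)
Your treatment of the two pointwise estimates coincides with the paper's own proof. For (\ref{1.7.1}) the paper does exactly what you propose: it writes $\vert u(x)\vert^{p}\leq \vert u(y)\vert^{p}+p\int_{x}^{y}\vert u\vert^{p-1}\vert u'\vert\,dt$, lets $y\to\infty$ using the vanishing condition built into $W^{1,p}_{\alpha,\theta}(0,\infty)$, and then performs the weighted H\"older step with the leftover nonincreasing power of $t$ evaluated at $t=x$; and it dismisses (\ref{1.5}) as ``easy to check'', which your monotonicity bound $\int_{0}^{R}\vert u\vert^{p}\,d\lambda_{\theta}\geq u(x)^{p}\lambda_{\theta}(0,x)$ settles. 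The compactness assertions, on the other hand, are not proved in the paper at all, so your three-region argument (Rellich on $[\delta,M]$, uniform tail decay from the pointwise bounds with the splitting $\vert u_{n}\vert^{q}=\vert u_{n}\vert^{q-p}\vert u_{n}\vert^{p}$ at the endpoint $q=q_{0}:=p^{2}(1+\theta)/[(p-1)\theta+\alpha]$, and uniform smallness near the origin) is genuine added value rather than a reproduction. Note that the endpoint splitting requires $q_{0}>p$, which holds precisely when $\alpha<p+\theta$, i.e.\ precisely when the interval $[q_{0},p^{\star})$ is nonempty, so your argument is consistent there.

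One step does fail as written: the near-origin estimate via the ``continuous Sobolev embedding $W^{1,p}_{\alpha,\theta}(0,\infty)\hookrightarrow L^{p^{\star}}_{\theta}(0,\infty)$''. When $\alpha=p-1$ --- which is the case in which the compactness statement is actually invoked in Sections 5 and 6 of the paper --- one has $p^{\star}=\infty$, and the embedding into $L^{\infty}_{\theta}$ is false; indeed $W^{1,p}_{p-1,\theta}\not\subset L^{\infty}_{\theta}$ is the starting point of the whole Trudinger--Moser problem. So the factor $\Vert u_{n}\Vert_{L^{p^{\star}}_{\theta}}^{q}$ in your H\"older estimate is not finite in general. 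The repair is immediate and does not change the structure of your proof: choose any finite $\tilde{q}$ with $q<\tilde{q}<p^{\star}$, use the subcritical embedding (for instance the weighted Gagliardo--Nirenberg inequality (\ref{1.1.1}) of Section 7, or Proposition \ref{P.0.1.0.0} on a bounded interval after multiplying by a cutoff), and conclude $\int_{0}^{\delta}\vert u_{n}\vert^{q}\,d\lambda_{\theta}\leq\Vert u_{n}\Vert_{L^{\tilde{q}}_{\theta}}^{q}\,\lambda_{\theta}(0,\delta)^{1-q/\tilde{q}}\to 0$ uniformly in $n$ as $\delta\to 0$. The same substitution also spares you from justifying the critical embedding on the half-line when $\alpha>p-1$: Proposition \ref{P.0.1.0.0} is stated only for $R<\infty$, so even in that case your step, as written, would need a cutoff argument to be legitimate.
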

\begin{proof} It is easy to check out (\ref{1.5}) from a nonincreasing function. Then, we will do only (\ref{1.7.1}).  
	
For every $0<x<y$ we have
\begin{eqnarray*}
	\left\vert u(x)\right\vert^{p}\leq \left\vert u(y)\right\vert^{p}+p\int_{x}^{y}\left\vert u(t)\right\vert^{p-1}\left\vert u'(t)\right\vert dt.
\end{eqnarray*}
By Holder Inequality and $\displaystyle\lim_{y\to\infty} u(y)=0$, we get
\begin{eqnarray*}
	\begin{array}{l}
	\displaystyle\left\vert u(x)\right\vert^{p}\leq
	p\int_{x}^{\infty}\left\vert u(t)\right\vert^{p-1}\left\vert u'(t)\right\vert dt\nonumber\\
	\quad\displaystyle\leq  p\omega_{\theta}^{-\frac{p-1}{p}}\omega_{\alpha}^{-\frac{1}{p}}x^{-\frac{p-1}{p}(1+\theta)}\left(\int_{0}^{\infty}\left\vert u(t)\right\vert^{p}d\lambda_{\theta}(t)\right)^{\frac{p-1}{p}}
	\cdot \left(\int_{0}^{\infty}\left\vert u'(t)\right\vert^{p}d\lambda_{\alpha}(t)\right)^{\frac{1}{p}}
	\end{array}
\end{eqnarray*}
which proves (\ref{1.7.1}).
\end{proof}

The next remark will be used in the proof of Theorem \ref{0.1.9}.
\begin{remark}\label{1.7.2}
By inequality (\ref{1.7.1}), we have  $\vert u(x)\vert\leq 1$, for all \begin{align*}x\geq\left(\dfrac{p}{\omega_{\theta}^{\frac{p}{p-1}}\omega_{\alpha}^{\frac{1}{p}}}\right)^{\frac{p}{(p-1)(1+
\theta)}}:=a_{0}\end{align*}
whenever $u\in W^{1,p}_{\alpha,\theta}(0,\infty)$ with $\Vert u\Vert_{W^{1,p}_{\alpha,\theta}(0, \infty)}\leq 1$ and $\alpha-(p-1)=0$. It is worth noting that $a_{0}$ depends only on $p$, and $\theta$. 
\end{remark}

\noindent\textbf{Proof of Theorem \ref{0.1.9}}

 We can assume by Theorem \ref{2} that $u$ is a nonincreasing positive function on $(0,\infty)$.

Let $a\geq a_{0}$ (see Remark \ref{1.7.2}) to be chosen later. Next, we divide the integral at (\ref{0.1.10}) in two parts, that is,
\begin{equation}\label{1.8}
\int_{0}^{\infty}A_{p,\mu}(|u(x)|)d\lambda_{\theta}(x)=\int_{0}^{a}A_{p,\mu}(|u|)d\lambda_{\theta}(x)+\int_{a}^{\infty}A_{p,\mu}(\vert u\vert)d\lambda_{\theta}(x).
\end{equation}
By Lemma \ref{1.7}, the second part at (\ref{1.8}) can be to estimated. Indeed, we have
\begin{eqnarray*}
\int_{a}^{\infty}A_{p,\mu}(\vert u\vert)d\lambda_{\theta}(x)=\sum_{j=\lfloor p\rfloor}^{\infty}\frac{\mu^{j}}{j!}\int_{a}^{\infty}\vert u\vert^{\frac{p}{p-1}j}r^{\theta}\omega_{\theta}dr.
\end{eqnarray*}

We obtain by Lemma \ref{1.7} and Remark \ref{1.7.2}
\begin{eqnarray}\label{1.8.1}
\displaystyle\int_{a}^{\infty}A_{p,\mu}(\vert u\vert)d\lambda_{\theta}(x) & = & \displaystyle\sum_{j=\lfloor p\rfloor}^{\infty}\frac{\mu^{j}}{j!}\int_{a}^{\infty}\vert u\vert^{\frac{p}{p-1}j}r^{\theta}\omega_{\theta}dr
 \nonumber\\
 & \leq &\omega_{\theta}\frac{\mu^{\lfloor p\rfloor}}{\lfloor p\rfloor!}\int_{0}^{\infty}\vert u\vert^{p}r^{\theta}dr  \nonumber \\
 & + &\omega_{\theta}\sum_{j= \lfloor p\rfloor+1}^{\infty}\frac{\mu^{j}(1+\theta)^{\frac{j}{p-1}}}{j!\omega_{\theta}^{\frac{j}{p-1}}}\left[\omega_{\theta}\int_{0}^{\infty}\vert u\vert^{p}r^{\theta}dr\right]^{\frac{j}
 {p-1}}\nonumber\\
 &\cdot&\int_{a}^{\infty}r^{\theta-\frac{(1+\theta)j}{p-1}}dr\nonumber\\
 =&\dfrac{\mu^{\lfloor p\rfloor}}{\lfloor p\rfloor!}&+\sum_{j= \lfloor p\rfloor+1}^{\infty}\frac{\mu^{j}(1+\theta)^{\frac{j}{p-1}}(p-1)\omega_{\theta}}{j!\omega_{\theta}^{j/p-1}(1+\theta)(j-(p-1))a^{\frac{(1+\theta)j}
 {p-1}}} 
\end{eqnarray}

To estimate the first part at (\ref{1.8}), let
\begin{eqnarray*}
v(r)=\left\{	\begin{array}{cc}
		u(r)-u(a), & 0<r\leq a \\
		0, & r\geq a\\
	\end{array}
 \right.
\end{eqnarray*}
Note that if $1<q\leq 2$ and $b\geq 0$, we have $(x+b)^{q}\leq \vert x\vert^{q}+qb^{q-1}x +b^{q}$ for all $x\geq -b$.
Then, by Lemma \ref{1.7} we obtain
\begin{eqnarray}\label{1.10}
u(r)^{\frac{p}{p-1}}&\leq& v(r)^{\frac{p}{p-1}}+\frac{p}{p-1}v(r)^{\frac{1}{p-1}}u(a)+u(a)^{\frac{p}{p-1}}\nonumber	\\
& \leq & v(r)^{\frac{p}{p-1}}+v(r)^{\frac{p}{p-1}}u(a)^{p}+u(a)^{\frac{p}{p-1}}+\frac{1}{(p-1)^{1/p-1}}\nonumber\\
&\leq& v(r)^{\frac{p}{p-1}}\left[1+\frac{1+\theta}{a^{1+\theta}\omega_{\theta}}\left(\omega_{\theta}\int_{0}^{\infty}|u|^{p}r^{\theta}dr\right)\right]+\left(\frac{1+\theta}{a^{1+\theta}\omega_{\theta}}\right)^{1/
p-1}\nonumber\\
&+&\frac{1}{(p-1)^{1/p-1}}\nonumber\\
&:=& v(r)^{\frac{p}{p-1}}\left[1+\frac{1+\theta}{a^{1+\theta}\omega_{\theta}}\left(\omega_{\theta}\int_{0}^{\infty}|u|^{p}r^{\theta}dr\right)\right]+d(a).
\end{eqnarray}
	Hence
\begin{eqnarray*}
u(r)&\leq& v(r)\left[1+\frac{1+\theta}{a^{1+\theta}\omega_{\theta}}\left(\omega_{\theta}\int_{0}^{\infty}|u|^{p}r^{\theta}dr\right)\right]^{\frac{p-1}{p}}+d(a)^{\frac{p-1}{p}}\nonumber\\
&:=&w(r)+d(a)^{\frac{p}{p-1}},
\end{eqnarray*} thus
\begin{eqnarray}\label{1.9}
\omega_{\alpha}\int_{0}^{a}|w'|^{p}r^{\alpha}dr&=&\omega_{\alpha}\int_{0}^{a}|u'|^{p}\left[1+\frac{1+\theta}{a^{1+\theta}\omega_{\theta}}\left(\omega_{\theta}\int_{0}^{\infty}|u|^{p}r^{\theta}dr\right)
\right]^{p-1}r^{\alpha}dr\nonumber\\
&=&\left[1+\frac{1+\theta}{a^{1+\theta}\omega_{\theta}}\left(\omega_{\theta}\int_{0}^{\infty}|u|^{p}r^{\theta}dr\right)\right]^{p-1}\omega_{\alpha}\int_{0}^{a}|u'|^{p}r^{\alpha}dr\nonumber\\
&\leq& \left[1+\frac{1+\theta}{a^{1+\theta}\omega_{\theta}}\left(\omega_{\theta}\int_{0}^{\infty}|u|^{p}r^{\theta}dr\right)\right]^{p-1}\left[1-\omega_{\theta}\int_{0}^{\infty}|u|^{p}r^{\theta}dr\right]\nonumber\\
&\leq& 1
\end{eqnarray}
where in the last inequality we used that the function $f:[0,1]\to \mathbb{R}$ defined by $f(t)=(1+\gamma t)^{p-1}(1-t)-1$ is non-positive for any $\gamma$ fixed in the interval $(0,1/(p-1))$ and 
consequentely the inequality (\ref{1.9}) is valid with
\begin{eqnarray*}
\left(\frac{(p-1)(1+\theta)}{\omega_{\theta}}\right)^{1/(1+\theta)}\leq a<\infty.
\end{eqnarray*} 
Next, from (\ref{1.10}) we have
\begin{eqnarray}\label{1.9.1}
\int_{0}^{a}A_{p,\mu}(\vert u(x)\vert)d\lambda_{\theta}(x)&\leq&\omega_{\theta}\int_{0}^{a}e^{\mu\vert u\vert^{\frac{p}{p-1}}}r^{\theta}dr\nonumber\\
&\leq&\omega_{\theta}\int_{0}^{a}e^{\mu\vert w\vert^{\frac{p}{p-1}}}r^{\theta}dr+\omega_{\theta}\int_{0}^{a}e^{d(a)}r^{\theta}dr.
\end{eqnarray}
We combine (\ref{1.8.1}), (\ref{1.9}), (\ref{1.9.1}) and Theorem \ref{P.0.1.5} to conclude the first part of the proof of the theorem.

For the second part, we are going to do the changing of variable as in \cite{Marcos do O - Oliveira}. We define $w(t)=\omega_{\alpha}^{\frac{1}{\alpha+1}}(1+\theta)^{\frac{\alpha}{1+\alpha}}u(Re^{-
\frac{t}{1+\theta}})$ for all $u\in W^{1,p}_{\alpha,\theta}(0,R)$, where $\alpha-(p-1)=0$. Then, we get

\begin{eqnarray}\label{1.9.2}
\int_{0}^{R}\vert u'(r)\vert^{p}d\lambda_{\alpha}(r)=\int_{0}^{\infty}\vert w'(t)\vert^{p}dt,
\end{eqnarray}
\begin{eqnarray}\label{1.9.3}
\int_{0}^{R}\vert u(r)\vert^{p}d\lambda_{\theta}(r)=\frac{R^{1+\theta}\omega_{\theta}}{(1+\theta)^{p}\omega_{\alpha}}\int_{0}^{\infty}\vert w(t)\vert^{p}e^{-t}dt
\end{eqnarray}
and \begin{eqnarray}\label{1.9.4}
\int_{0}^{R}e^{\vert u\vert^{\frac{p}{p-1}}}d\lambda_{\theta}(r)=\frac{\omega_{\theta} R^{1+\theta}}{1+\theta}\int_{0}^{\infty}e^{\frac{\mu}{\mu_{\alpha,\theta}}\vert w\vert^{\frac{p}{p-1}}-t}dt.
\end{eqnarray}
We consider Moser's functions
\begin{eqnarray*}
	w_{j}(t)=\left\{\begin{array}{cc}
		\frac{t}{j^{\frac{1}{p}}} & 0\leq t\leq j\\
		j^{\frac{p-1}{p}} & t\geq j.
	\end{array} 
\right.
\end{eqnarray*}

Hence, we obtain  from (\ref{1.9.2}), (\ref{1.9.3}) and (\ref{1.9.4}) that
\begin{eqnarray*}
\int_{0}^{R}e^{\left(\frac{\vert u_{j}\vert}{\Vert u_{j}\Vert_{W^{1,p}_{\alpha,\theta}(0,R)}}\right)^{\frac{p}{p-1}}}d\lambda_{\theta}(r)&=&\frac{\omega_{\theta} R^{1+\theta}}{1+\theta}\int_{0}^{\infty}
e^{\frac{\mu\vert w_{j}\vert^{\frac{p}{p-1}}}{\mu_{\alpha,\theta}\left(1+\rho(\alpha,\theta,R)a_{j}\right)^{\frac{1}{p-1}}}-t}dt\\
&\geq& \large{e}^{\left(\frac{\mu}{\mu_{\alpha,\theta}(1+\rho\left(\alpha,\theta,R\right)a_{j})^{\frac{1}{p-1}}}-1\right)j},
\end{eqnarray*} where $\rho(\alpha,\theta,R)=\frac{R^{1+\theta}\omega_{\theta}}{(1+\theta)^{p}\omega_{\alpha}}$, $a_{j}=\frac{1}{j}\displaystyle\int_{0}^{j}e^{-t}t^{p}dt+j^{p-1}e^{-j}$ and $w_{j}(t)=
\omega_{\alpha}^{\frac{1}{1+\alpha}}(1+\theta)^{\frac{\alpha}{\alpha+1}}u_{j}(Re^{-\frac{t}{(1+\theta)}})$.
Thus, if $\mu>\mu_{\alpha,\theta}$  \begin{eqnarray*}
\lim_{j\to\infty}\int_{0}^{R}e^{\left(\frac{\vert u_{j}\vert}{\Vert u_{j}\Vert_{W^{1,p}_{\alpha,\theta}(0,R)}}\right)^{\frac{p}{p-1}}}d\lambda_{\theta}(r)&\geq&\lim_{j\to\infty}e^{\left(\frac{\mu}{\mu_{\alpha,\theta}
(1+\rho\left(\alpha,\theta,R\right)a_{j})^{\frac{1}{p-1}}}-1\right)j}\\
&=&+\infty.
\end{eqnarray*} 
Which concludes the theorem.

\section{Proof of the Theorem \ref{0.1.12}}
\label{sec:5}
In this section, we are going to show the Theorem \ref{0.1.12}. To show the attainability, we study the maximizing sequence to (\ref{0.1.11}). Throughout this section we assume ( via Lemma \ref{2}) that
 $(u_{n})$ is a non-increasing positive maximizing sequence to (\ref{0.1.11}). Besides, assume

\begin{equation*}
u_{n}\rightharpoonup u \, \ \mbox{in}\, \ W^{1,p}_{\alpha,\theta}(0,\infty),\ \ \mbox{where}\ \ \alpha-(p-1)=0.
\end{equation*}

We begin with
\begin{lemma}\label{2.1.0} Let $0<\mu<\mu_{\alpha,\theta}$. Then, we have
	\begin{align}\label{2.1.1}
&\displaystyle\int_{0}^{\infty}A_{p,\mu}\left(\left\vert u_{n}\right\vert\right)-\frac{\mu^{\lfloor p\rfloor}}{\lfloor p\rfloor!}\left\vert u_{n}\right\vert^{\frac{p\lfloor p\rfloor}{p-1}} d\lambda_{\theta}-\int_{0}^{\infty}
A_{p,\mu}\left(\left\vert u\right\vert\right)-\frac{\mu^{\lfloor p\rfloor}}{\lfloor p\rfloor!}\left\vert u\right\vert^{\frac{p\lfloor p\rfloor}{p-1}} d\lambda_{\theta}\to 0  \nonumber \\
&\text{as}\, \ n\to\infty.  \end{align}
	
\end{lemma}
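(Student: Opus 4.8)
The plan is to isolate what makes the subtracted term special. Writing out the Taylor expansion,
$$\tilde{A}(t):=A_{p,\mu}(t)-\frac{\mu^{\lfloor p\rfloor}}{\lfloor p\rfloor!}\,t^{\frac{p\lfloor p\rfloor}{p-1}}=\sum_{j=\lfloor p\rfloor+1}^{\infty}\frac{\mu^{j}}{j!}\,t^{\frac{p}{p-1}j},$$
so the surviving profile $\tilde{A}$ involves only the powers $\frac{p}{p-1}j$ with $j\ge\lfloor p\rfloor+1$, and each of these is \emph{strictly} larger than $p$ (equivalently $\lfloor p\rfloor+1>p-1$, which always holds). This is the decisive point: the removed term is the only one whose exponent can reach the critical value $p$ — this happens exactly when $p$ is an integer, since then $\tfrac{p\lfloor p\rfloor}{p-1}=p$ — and a purely weakly convergent $L^{p}$-term is precisely where mass may leak to infinity. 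Every term that remains is super-critical and hence compact. Since $\alpha-(p-1)=0$ forces $p^{\star}=\infty$, Lemma~\ref{1.7}(ii) gives $u_{n}\to u$ strongly in $L^{q}_{\theta}(0,\infty)$ for every $q\in(p,\infty)$; in particular $u_{n}\to u$ in measure and, along a subsequence, $\tilde{A}(|u_{n}|)\to\tilde{A}(|u|)$ a.e. It then remains to upgrade this to $L^{1}$-convergence, which I would obtain from uniform integrability of $\{\tilde{A}(|u_{n}|)\}$.

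I would split the integral at the radius $a_{0}$ of Remark~\ref{1.7.2}. On the tail $[a_{0},\infty)$ the functions are non-increasing, so the sharp decay (\ref{1.5}) (not the weaker bound (\ref{1.7.1})) gives $|u_{n}(x)|\le C\,x^{-(1+\theta)/p}$; since $|u_{n}|\le1$ there, $\tilde{A}(|u_{n}|)\le C\,|u_{n}|^{\frac{p(\lfloor p\rfloor+1)}{p-1}}\le C'\,x^{-\frac{(1+\theta)(\lfloor p\rfloor+1)}{p-1}}$, a $\lambda_{\theta}$-integrable majorant on $[a_{0},\infty)$ precisely because $\lfloor p\rfloor+1>p-1$ (the same computation as in (\ref{1.8.1})). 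Dominated convergence then yields $\int_{a_{0}}^{\infty}\tilde{A}(|u_{n}|)\,d\lambda_{\theta}\to\int_{a_{0}}^{\infty}\tilde{A}(|u|)\,d\lambda_{\theta}$.

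On the bounded set $(0,a_{0})$, of finite $\lambda_{\theta}$-measure, I would use subcritical room below $\mu_{\alpha,\theta}$ to force equi-integrability. Fix $\mu'$ with $\mu<\mu'<\mu_{\alpha,\theta}$. Since $\|u_{n}\|_{W^{1,p}_{\alpha,\theta}}=1$, Theorem~\ref{0.1.9} at level $\mu'$ bounds $\int_{0}^{\infty}A_{p,\mu'}(|u_{n}|)\,d\lambda_{\theta}$ uniformly; restoring the finitely many polynomial terms (each controlled on the finite-measure set $(0,a_{0})$ by the uniform $L^{q}_{\theta}$-bounds) gives $M':=\sup_{n}\int_{0}^{a_{0}}e^{\mu'|u_{n}|^{p/(p-1)}}\,d\lambda_{\theta}<\infty$. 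Hölder's inequality with exponent $\mu'/\mu$ then yields, for every measurable $E\subseteq(0,a_{0})$,
$$\int_{E}e^{\mu|u_{n}|^{\frac{p}{p-1}}}\,d\lambda_{\theta}\le (M')^{\mu/\mu'}\,\lambda_{\theta}(E)^{1-\mu/\mu'},$$
which tends to $0$ as $\lambda_{\theta}(E)\to0$, uniformly in $n$. As $0\le\tilde{A}(|u_{n}|)\le e^{\mu|u_{n}|^{p/(p-1)}}$, the family $\{\tilde{A}(|u_{n}|)\}$ is uniformly integrable on $(0,a_{0})$; combined with convergence in measure, Vitali's theorem gives $\int_{0}^{a_{0}}\tilde{A}(|u_{n}|)\,d\lambda_{\theta}\to\int_{0}^{a_{0}}\tilde{A}(|u|)\,d\lambda_{\theta}$. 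Adding the two pieces produces (\ref{2.1.1}).

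I expect the main obstacle to be the bounded-part estimate: one must check that weak (not strong) $W^{1,p}$-convergence is harmless for the super-critical remainder, and the clean mechanism for this is exactly the equi-integrability manufactured by choosing $\mu'\in(\mu,\mu_{\alpha,\theta})$. Once that is in place the tail is routine, provided the sharp non-increasing decay (\ref{1.5}) — rather than (\ref{1.7.1}) — is used, since it is this stronger bound that makes the lowest surviving power $\lambda_{\theta}$-integrable at infinity.
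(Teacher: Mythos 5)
Your proof is correct, but it takes a genuinely different route from the paper's. Both arguments begin with the same reduction: the quantity in \eqref{2.1.1} equals $\int_{0}^{\infty}B_{\lfloor p\rfloor+1,\mu}(\vert u_{n}\vert)\,d\lambda_{\theta}-\int_{0}^{\infty}B_{\lfloor p\rfloor+1,\mu}(\vert u\vert)\,d\lambda_{\theta}$ with $B_{k,\mu}(t)=\sum_{j\geq k}\frac{\mu^{j}}{j!}t^{\frac{p}{p-1}j}$ (your $\tilde{A}$), and both rest on the same two pillars: the subcritical gap $\mu<\mu_{\alpha,\theta}$, exploited through the uniform bound of Theorem \ref{0.1.9} at an auxiliary level ($q\mu<\mu_{\alpha,\theta}$ in the paper, $\mu'\in(\mu,\mu_{\alpha,\theta})$ in your version), and the compactness of Lemma \ref{1.7}(ii) for non-increasing sequences. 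From there the paper works pointwise: the Mean Value Theorem plus convexity of $B_{\lfloor p\rfloor+1,\mu}$ give
\begin{equation*}
\left\vert B_{\lfloor p\rfloor+1,\mu}(u_{n})-B_{\lfloor p\rfloor+1,\mu}(u)\right\vert\leq \mu\tfrac{p}{p-1}\left\vert \gamma_{n}u_{n}+(1-\gamma_{n})u\right\vert^{\frac{1}{p-1}}\left[A_{p,\mu}(u_{n})+A_{p,\mu}(u)\right]\left\vert u_{n}-u\right\vert,
\end{equation*}
and a three-factor H\"older inequality (exponents $r,q,t$ with $q\mu<\mu_{\alpha,\theta}$, $t>p^{2}/(p-1)$) reduces everything to $\Vert u_{n}-u\Vert_{L_{\theta}^{t}}\to 0$; this buys a quantitative, Lipschitz-type modulus: the difference is $O(\Vert u_{n}-u\Vert_{L_{\theta}^{t}})$. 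You instead split the domain at $a_{0}$ and use soft integration theory: on the tail, the decay \eqref{1.5} for non-increasing functions plus $\vert u_{n}\vert\leq 1$ produce an $n$-independent integrable majorant (integrable precisely because the lowest surviving exponent $\frac{p(\lfloor p\rfloor+1)}{p-1}$ is supercritical, the same computation as \eqref{1.8.1}), so dominated convergence applies; on the finite-measure piece, H\"older with exponent $\mu'/\mu$ converts the uniform Trudinger--Moser bound into equi-integrability, and Vitali's theorem concludes. Your route avoids the paper's somewhat delicate MVT/convexity manipulation and is more elementary and robust, at the price of losing the explicit rate. One small point of hygiene: dominated convergence on the tail needs a.e.\ convergence, which you obtain only along a subsequence; either run the standard subsequence-of-a-subsequence argument to recover the full sequence, or invoke the in-measure versions of the dominated convergence and Vitali theorems, which the convergence in measure you already established supports.
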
 
\begin{proof}
	We rewritten (\ref{2.1.1}) as follows
	\begin{eqnarray*}\label{2.1.2}
	\int_{0}^{\infty}B_{\lfloor p\rfloor+1,\mu}\left(\left\vert u_{n}\right\vert\right)\ d\lambda_{\theta}-\int_{0}^{\infty}B_{\lfloor p\rfloor+1,\mu}\left(\left\vert u\right\vert\right) d\lambda_{\theta}\to
	 0\end{eqnarray*}
	as $n\to\infty$,
\end{proof}
where \begin{align*}
	B_{k,\mu}(t):=\sum_{j=k}^{\infty}\dfrac{\mu^{j}}{j!}t^{\frac{p}{p-1}j}, \, \ \text{where}\, \ k\in\mathbb{N} \, \ \text{and}\, \ t\in [0,\infty).
\end{align*}
It follows from Mean Value Theorem and convexity of $B_{\lfloor p\rfloor+1,\mu}$ that
\begin{align}\label{2.1.3}
&\left\vert B_{\lfloor p\rfloor+1,\mu}(u_{n}(x))-B_{\lfloor p\rfloor+1,\mu}(u(x))\right\vert\nonumber\\
&\leq \left(B_{\lfloor p\rfloor+1,\mu}\right)'(\gamma_{n}(x)u_{n}(x)+(1-\gamma_{n}(x)u(x))\cdot\left\vert u_{n}(x)-u(x)\right\vert\nonumber\\
&=\mu\frac{p}{p-1}\left\vert \gamma_{n}(x)u_{n}(x)+(1-\gamma_{n}(x)u(x)\right\vert^{\frac{1}{p-1}}\nonumber\\
 &\cdot B_{\lfloor p\rfloor,\mu}(\gamma_{n}(x)u_{n}(x)+(1-\gamma_{n}(x))u(x))\cdot\left\vert u_{n}(x)-u(x)\right\vert\nonumber\\
&\leq\mu\frac{p}{p-1}\left\vert \gamma_{n}(x)u_{n}(x)+(1-\gamma_{n}(x)u(x)\right\vert^{\frac{1}{p-1}}\nonumber\\
&\cdot\left[\gamma_{n}(x)B_{\lfloor p\rfloor,\mu}(u_{n}(x))+(1-\gamma_{n}(x))B_{\lfloor p\rfloor,\mu}(u(x))\right]\cdot\left\vert u_{n}(x)-u(x)\right\vert \nonumber\\
&\leq\mu\frac{p}{p-1}\left\vert \gamma_{n}(x)u_{n}(x)+(1-\gamma_{n}(x)u(x)\right\vert^{\frac{1}{p-1}}\cdot \left[A_{p,\mu}(u_{n}(x))+A_{p,\mu}(u(x))\right]\nonumber\\
&\cdot\left\vert u_{n}(x)-u(x)\right\vert
\end{align}
Now, by H\"older's and Minkowski's Inequalities, and (\ref{2.1.3}) we get

\begin{align}\label{2.1.4}
&\left\vert\int_{0}^{\infty}B_{\lfloor p\rfloor+1,\mu}\left(\left\vert u_{n}\right\vert\right) d\lambda_{\theta}-\int_{0}^{\infty}B_{\lfloor p\rfloor+1,\mu}\left(\left\vert u\right\vert\right) d\lambda_{\theta}\right\vert
\nonumber\\
&\leq\mu\frac{p}{p-1}\left(\int_{0}^{\infty}\left\vert \gamma_{n}(x)u_{n}(x)+(1-\gamma_{n}(x))u(x)\right\vert^{\frac{r}{p-1}}d\lambda_{\theta}(x)\right)^{\frac{1}{r}}\nonumber\\
&\cdot\left(\int_{0}^{\infty}\left[A_{ p,\mu}(u_{n}(x))+A_{ p,\mu}(u(x))\right]^{q}d\lambda_{\theta}(x)\right)^{\frac{1}{q}}\left(\int_{0}^{\infty}\left\vert u_{n}(x)-u(x)\right\vert^{t}d\lambda_{\theta}(x)
\right)^{\frac{1}{t}}\nonumber\\
&\leq\mu\frac{p}{p-1}\Vert u_{n}\Vert_{L_{\theta}^{\frac{r}{p-1}}(0,\infty)}^{\frac{1}{p-1}}\Vert u\Vert_{L_{\theta}^{\frac{r}{p-1}}(0,\infty)}^{\frac{1}{p-1}}\left(\int_{0}^{\infty}\left(A_{ p,\mu}(u_{n}(x))\right)^{q}d
\lambda_{\theta}(x)\right)^{\frac{1}{q}}\nonumber\\
&\cdot\left(\int_{0}^{\infty}\left(A_{ p,\mu}(u(x))\right)^{q}d\lambda_{\theta}(x)\right)^{\frac{1}{q}}\cdot\Vert u_{n}-u\Vert_{L_{\theta}^{t}(0,\infty)}\nonumber\\
&\leq\mu\frac{p}{p-1}\Vert u_{n}\Vert_{L_{\theta}^{\frac{r}{p-1}}(0,\infty)}^{\frac{1}{p-1}}\Vert u\Vert_{L_{\theta}^{\frac{r}{p-1}}(0,\infty)}^{\frac{1}{p-1}}\left(\int_{0}^{\infty}A_{p,q\mu}(u_{n}(x))d
\lambda_{\theta}(x)\right)^{\frac{1}{q}}\nonumber\\
&\cdot\left(\int_{0}^{\infty}A_{p,q\mu}(u(x))d\lambda_{\theta}(x)\right)^{\frac{1}{q}}\Vert u_{n}-u\Vert_{L_{\theta}^{t}(0,\infty)},
 \end{align}
 where $q,r,t>1$ are real numbers satisfying $\frac{1}{r}+\frac{1}{q}+\frac{1}{t}=1$, $q\mu<\mu_{\alpha,\theta}$, $\frac{r}{p-1}\geq p$ and $t>\frac{p^{2}}{p-1}$. Besides, in the last inequality at \ref{2.1.4})
  we used the following inequality
 \begin{eqnarray*}
 \left(e^{\mu t^{\frac{p}{p-1}}}-\sum_{j=0}^{\lfloor p\rfloor-1}\frac{\mu^{j}}{j!}t^{\frac{p}{p-1}j}\right)^{q}\leq e^{q\mu t^{\frac{p}{p-1}}}-\sum_{j=0}^{\lfloor p\rfloor-1}\frac{(q\mu)^{j}}{j!}t^{\frac{p}{p-1}j}.
 \end{eqnarray*}
Therefore, from (\ref{2.1.4}), Lemma \ref{1.7} and compactness embedding we conclude the proof of the Lemma.

To continue the study of the maximizing sequence $(u_{n})$ based on the concentration-compactness type argument, we analyze the possibility of a lack of compactness which is called vanishing. 

For this, we will introduce some components as follows

\begin{align*}
&\lefteqn{\mu_{0}=\lim_{R\to\infty}\lim_{n\to\infty}\left(\int_{0}^{R}\vert u_{n}(x)\vert^{p}d\lambda_{\theta}(x)+\int_{0}^{R}\vert(u_{n})'(x)\vert^{p}d\lambda_{\alpha}(x)\right)}\\
&\mu_{\infty}=\lim_{R\to\infty}\lim_{n\to\infty}\left(\int_{R}^{\infty}\vert u_{n}(x)\vert^{p}d\lambda_{\theta}(x)+\int_{R}^{\infty}\vert (u_{n})'(x)\vert^{p}d\lambda_{\alpha}(x)\right)\\
&\nu_{0}=\lim_{R\to\infty}\lim_{n\to\infty}\left(\int_{0}^{R}A_{p,\mu}\left(\vert u_{n}\vert\right)d\lambda_{\theta}(x)\right)\\
&\nu_{\infty}=\lim_{R\to\infty}\lim_{n\to\infty}\left(\int_{R}^{\infty}A_{p,\mu}\left(\vert u_{n}(x)\vert\right)d\lambda_{\theta}(x)\right)\\
&\eta_{0}=\lim_{R\to\infty}\lim_{n\to\infty}\int_{0}^{R}\vert u_{n}(x)\vert^{\frac{p}{p-1}\lfloor p\rfloor}d\lambda_{\theta}(x)\\
&\eta_{\infty}=\lim_{R\to\infty}\lim_{n\to\infty}\int_{R}^{\infty}\vert u_{n}(x)\vert^{\frac{p}{p-1}\lfloor p\rfloor} d\lambda_{\theta}(x)
\end{align*}
taking an appropriate subsequences if necessary. It is easy to see that
\begin{eqnarray}\label{2.0.1}
\nu_{i}&\geq&\frac{\mu^{\lfloor p\rfloor}}{\lfloor p\rfloor!}\eta_{i},\, 1=\mu_{0}+\mu_{\infty},\,  d(p,\theta,\mu)=\nu_{0}+\nu_{\infty} \, \ \text{and}\\
1&\geq&\eta_{0}+\eta_{\infty}\,  (\, \text{if}\, p \,\text{is an integer}),\nonumber
\end{eqnarray}
where $i=0 \,\text{or}\, \ i=\infty$.
\begin{definition} $(u_{n})$ is a normalized vanishing sequence, $($$NVS$$)$ in short, if $(u_{n})$ satisfies  $\Vert u_{n}\Vert_{W^{1,p}_{\alpha,\theta}(0,\infty)}=1$ $($with $\alpha-(p-1)=0$$)$, $u=0$
 and $\nu_{0}=0$.
\end{definition}

\begin{example}\label{1.12}
	Let $\phi$ be a smooth nonincreasing function with compact support on $[0,+\infty)$ satisfying $\left\Vert \phi\Vert\right._{L_{\theta}^{p}(0,\infty)}=1$. Besides that, we take $\gamma,\sigma$ 
	positive real numbers such that $p\gamma-\sigma(1+\theta)=0$. We set
\begin{eqnarray*}\label{1.12.1}
	\phi_{n}(x):=\frac{\lambda_{n}^{\gamma}\phi(\lambda_{n}^{\sigma}x)}{(1+\lambda_{n}^{p\gamma}\lambda_{0})^{\frac{1}{p}}},
\end{eqnarray*}
where $\lambda_{0}:=\Vert \phi'\Vert_{L_{\alpha}^{p}(0,\infty)}^{p}$ and $(\lambda_{n})$ is a positive sequence such that $\lambda_{n}\to 0$ as $n\to\infty$. Thus, $\phi_{n}$ is a normalized vanishing
 sequence.
\end{example}
The main aim here it is show that $d(\alpha,\theta,\mu)$ is gratter than the vanishing level, more precisely

\begin{eqnarray*}
d(\alpha,\theta,\mu)>\sup_{\left\{(u_{n})\subset W^{1,p}_{\alpha,\theta}(0,\infty): (u_{n})\, \, \mbox{is a NVS}\right\}}\int_{0}^{\infty}A_{p,\mu}\left(\vert u_{n}(x)\vert\right)d\lambda_{\theta}(x).
\end{eqnarray*}

Thus, we define the \textit{normalized vanishing limit} as follows

\begin{definition}
	 The number
	\begin{eqnarray}\label{1.12.0}
	d_{nvl}(\alpha,\theta,\mu)=\sup_{\left\{(u_{n})\subset W^{1,p}_{\alpha,\theta}(0,\infty): (u_{n})\, \, \mbox{is a NVS}\right\}}\int_{0}^{\infty}A_{p,\mu}\left(\vert u_{n}(x)\vert\right)d\lambda_{\theta}(x),
	\end{eqnarray}
 is called a \textit{normalized vanishing limit}.
\end{definition}
The \textit{normalized vanishing limit} will depend only on $\alpha$ and $\mu$.

Next, we rewrite the elements defined above. Given a real number $R>0$,  we take a function $\phi_{R}\in C^{\infty}(\mathbb{R})$ such that
\begin{eqnarray*}
\left\{\begin{array}{cc} \phi_{R}(x)=1 & 0 \leq x < R \\  0\leq\phi_{R}(x)\leq 1, & R\leq x\leq R+1 \\
\phi_{R}(x)=0 & R+1\leq x \\
|\phi_{R}'(x)|\leq 2 & x\in\mathbb{R}.
\end{array}\right.
\end{eqnarray*}
After that, we define the functions $\phi_{R}^{0}$ and $\phi_{R}^{\infty}$ by
\begin{eqnarray*}
\phi_{R}^{0}(x):=\phi_{R}(x), \, \ \phi_{R}^{\infty}(x):=1-\phi_{R}^{0}(x).
\end{eqnarray*}

\begin{lemma}\label{1.11} Let $u_{n,R}^{i}=\phi_{R}^{i}u_{n}\,\ (i=0,\infty)$. We have
	\begin{eqnarray}
	&\mu_{i}&=\lim_{R\to\infty}\lim_{n\to\infty}\left(\int_{0}^{\infty}\vert u_{n,R}^{i}(x)\vert^{p}d\lambda_{\theta}(x)+\int_{0}^{\infty}\vert (u_{n,R}^{i}(x))'\vert^{p}d\lambda_{\alpha}(x)\right)\label{1.11.0}\\
&\nu_{i}&=\lim_{R\to\infty}\lim_{n\to\infty}\int_{0}^{\infty}A_{p,\mu}\left(\vert u_{n,R}^{i}(x)\vert\right)d\lambda_{\theta}(x)\label{1.11.1}\\
&\eta_{i}&=\lim_{R\to\infty}\lim_{n\to\infty}\int_{0}^{\infty}\vert u_{n,R}^{i}\vert^{\frac{p}{p-1}\lfloor p\rfloor}d\lambda_{\theta}(x)\label{1.11.2}
	\end{eqnarray} 
\end{lemma}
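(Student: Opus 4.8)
The plan is to expand $u_{n,R}^{i}=\phi_{R}^{i}u_{n}$ and to exploit that each cut-off $\phi_{R}^{i}$ is locally constant off the single slice $(R,R+1)$. Since $\phi_{R}^{0}\equiv1$ on $(0,R)$, $\phi_{R}^{0}\equiv0$ on $(R+1,\infty)$, $0\le\phi_{R}^{0}\le1$ and $|(\phi_{R}^{0})'|\le2$ in between (and symmetrically for $\phi_{R}^{\infty}=1-\phi_{R}^{0}$), each integral in (\ref{1.11.0})--(\ref{1.11.2}) breaks into a sharp cut-off piece that already reproduces the defining expression for $\mu_{i}$, $\nu_{i}$ or $\eta_{i}$, plus an error supported on $(R,R+1)$. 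Thus the whole statement reduces to showing that these slice errors vanish after letting $n\to\infty$ and then $R\to\infty$; I would run the cases $i=0$ and $i=\infty$ in parallel.

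First I would dispose of every quantity that only carries the weight $\theta$, namely $\nu_{i}$, $\eta_{i}$ and the $L^{p}_{\theta}$ half of $\mu_{i}$. By Theorem \ref{2} the $u_{n}$ may be taken nonincreasing, so the pointwise bound (\ref{1.5}) of Lemma \ref{1.7} gives $|u_{n}(x)|^{p}\le\frac{1+\theta}{\omega_{\theta}}x^{-(1+\theta)}$ uniformly in $n$. Consequently any integrand $|\phi_{R}^{i}u_{n}|^{s}$ with $s\ge p$ is dominated on the slice by $C\,x^{\theta-\frac{s}{p}(1+\theta)}$, whose integral over $(R,R+1)$ is $O(R^{\theta-\frac{s}{p}(1+\theta)})$ (or $O(\log\frac{R+1}{R})$ in the borderline case) and hence tends to $0$ as $R\to\infty$, uniformly in $n$. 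Applying this with $s=\frac{p\lfloor p\rfloor}{p-1}$ gives (\ref{1.11.2}); applying it term by term to $A_{p,\mu}(|u_{n}|)=\sum_{j\ge\lfloor p\rfloor}\frac{\mu^{j}}{j!}|u_{n}|^{\frac{p}{p-1}j}$, exactly as in the tail estimate (\ref{1.8.1}), gives (\ref{1.11.1}); and the case $s=p$ settles the $L^{p}_{\theta}$ part of (\ref{1.11.0}).

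The Dirichlet part of (\ref{1.11.0}) is the delicate point and, I expect, the main obstacle, because it carries the weight $\alpha$ and because differentiating the cut-off creates a term with no built-in smallness. Writing $(u_{n,R}^{i})'=(\phi_{R}^{i})'u_{n}+\phi_{R}^{i}u_{n}'$ and using Minkowski's inequality on the slice, the error is bounded by $(\int_{R}^{R+1}|(\phi_{R}^{i})'u_{n}|^{p}d\lambda_{\alpha})^{1/p}+(\int_{R}^{R+1}|u_{n}'|^{p}d\lambda_{\alpha})^{1/p}$. The second term is harmless: setting $G(R)=\lim_{n}\int_{R}^{\infty}|u_{n}'|^{p}d\lambda_{\alpha}$, a nonincreasing bounded function of $R$, the slice integral telescopes to $G(R)-G(R+1)\to0$ as $R\to\infty$, which is exactly the ``mass on a single slice is squeezed out'' phenomenon separating $\mu_{0}$ from $\mu_{\infty}$.

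The remaining term $\int_{R}^{R+1}|(\phi_{R}^{i})'u_{n}|^{p}d\lambda_{\alpha}\le2^{p}\int_{R}^{R+1}|u_{n}|^{p}d\lambda_{\alpha}$ is the genuinely hard piece, since it mixes the weight $\alpha$ with a function controlled only in the weight $\theta$. Here the standing relation $\alpha=p-1$ is essential: the nonincreasing bound gives $\int_{R}^{R+1}|u_{n}|^{p}d\lambda_{\alpha}\le C\int_{R}^{R+1}x^{\,p-2-\theta}dx$, which vanishes as $R\to\infty$ precisely when $\theta\ge\alpha$. I would therefore single this estimate out as the crux: for $\theta\ge\alpha=p-1$ it closes the argument directly, whereas for $\theta<\alpha$ the crude pointwise bound is not enough and one must upgrade it to the sharper decay available for a maximizing sequence (so that $u_{n}\in L^{p}_{\alpha}$ and the slices $\int_{R}^{R+1}|u_{n}|^{p}d\lambda_{\alpha}$ telescope to $0$ as above). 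Granting this, summing the $i=0$ and $i=\infty$ identities recovers the relations $\mu_{0}+\mu_{\infty}=1$ and $d=\nu_{0}+\nu_{\infty}$ recorded in (\ref{2.0.1}), which I would use as a consistency check on the computation.
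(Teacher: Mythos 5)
Your overall strategy coincides with the paper's: split each integral into the sharp cut-off piece, whose double limit is by definition $\mu_i$, $\nu_i$ or $\eta_i$, plus an error supported on the slice $(R,R+1)$; dispose of all the $\theta$-weighted quantities by elementary means (the paper uses the squeeze $\int_{0}^{R}\vert u_{n}\vert^{p}d\lambda_{\theta}\leq\int_{0}^{\infty}\vert\phi_{R}^{0}u_{n}\vert^{p}d\lambda_{\theta}\leq\int_{0}^{R+1}\vert u_{n}\vert^{p}d\lambda_{\theta}$ rather than your uniform pointwise domination, but the two are interchangeable here); and isolate, in the Dirichlet term, the cross term
\begin{equation*}
\int_{R}^{R+1}\vert(\phi_{R}^{i})'u_{n}\vert^{p}\,d\lambda_{\alpha}\leq 2^{p}\Vert u_{n}\Vert_{L_{\alpha}^{p}(R,R+1)}^{p}
\end{equation*}
as the only genuinely delicate object. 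The paper does exactly this, using the Mean Value Theorem and H\"older (instead of your Minkowski split) to arrive at the same quantity $\Vert u_{n}\Vert_{L_{\alpha}^{p}(R,R+1)}$ inside its error $\rho_{n,R}$; your telescoping of $\int_{R}^{R+1}\vert u_{n}'\vert^{p}d\lambda_{\alpha}$ plays the role of the paper's squeeze applied to $\int_{0}^{\infty}\vert\phi_{R}^{0}u_{n}'\vert^{p}d\lambda_{\alpha}$.

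Where you part ways with the paper is in killing this cross term, and there your proposal has a genuine gap. A small slip first: the crude bound $\vert u_{n}(x)\vert^{p}\leq Cx^{-(1+\theta)}$ turns the slice integral into $C\int_{R}^{R+1}x^{p-2-\theta}dx$, which vanishes precisely when $\theta>p-2=\alpha-1$, not when $\theta\geq\alpha$. More seriously, your fallback for small $\theta$ --- upgrading so that ``$u_{n}\in L^{p}_{\alpha}$ and the slices telescope'' --- cannot work as stated: telescoping requires a bound on $\int_{R}^{\infty}\vert u_{n}\vert^{p}d\lambda_{\alpha}$ that is \emph{uniform in $n$}, and no such bound follows from $\Vert u_{n}\Vert_{W^{1,p}_{\alpha,\theta}(0,\infty)}=1$; indeed, for $\theta<p-2$ a nonincreasing function such as $u(x)\sim x^{-(1+\theta)/p}(\log x)^{-2/p}$ lies in $W^{1,p}_{\alpha,\theta}(0,\infty)$ and yet has $\Vert u\Vert_{L_{\alpha}^{p}(R,R+1)}\to\infty$. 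The paper's device at this exact point is what your argument is missing: because the limits are iterated, one first fixes $R$ and lets $n\to\infty$; on the bounded slice $(R,R+1)$ all weights are comparable to constants, so the local (Rellich-type) compact embedding gives $\lim_{n\to\infty}\Vert u_{n}\Vert_{L_{\alpha}^{p}(R,R+1)}=\Vert u\Vert_{L_{\alpha}^{p}(R,R+1)}$, replacing the whole sequence by the single weak limit $u$ before $R\to\infty$ is taken, so that no uniformity in $n$ is ever required. (Even then the paper tacitly uses that $\Vert u\Vert_{L_{\alpha}^{p}(R,R+1)}\to0$ as $R\to\infty$ for the fixed limit $u$, which for $\theta\leq p-2$ itself calls for justification; so your instinct that this term is the crux of the lemma is sound --- it is the repair requiring uniformity in $n$ that fails.)
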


\begin{proof}
we will prove only (\ref{1.11.0}) with $i=0$.
On the one hand,
\begin{eqnarray}\label{1.11.3}
\int_{0}^{R}\vert u_{n}\vert^{p}d\lambda_{\theta}\leq\int_{0}^{\infty}\vert\phi_{R}^{0}u_{n}\vert^{p}d\lambda_{\theta}\leq\int_{0}^{R+1}\vert u_{n}\vert^{p}d\lambda_{\theta}.
\end{eqnarray}
On the other hand, from the Mean Value Theorem we obtain
\begin{eqnarray}\label{1.11.4}
\int_{0}^{\infty}\vert(u_{n,R}^{0})'\vert^{p}d\lambda_{\alpha}&=&\int_{0}^{\infty}\vert\phi_{R}^{0}u_{n}'+(\phi_{R}^{0})'u_{n}\vert^{p}d\lambda_{\alpha}\nonumber\\
&=&\int_{0}^{\infty}\vert\phi_{R}^{0}u_{n}'\vert^{p}d\lambda_{\alpha} + \rho_{n,R},
\end{eqnarray} where 
\begin{align*}
&\rho_{n,R}=p\int_{0}^{\infty}\vert\phi_{R}^{0}u_{n}'+t_{n}(x)(\phi_{R}^{0})'u_{n}\vert^{p-2}\phi_{R}^{0}u_{n}'(\phi_{R}^{0})'u_{n}d\lambda_{\alpha}(x)\\
\lefteqn{}&+p\int_{0}^{\infty}\vert\phi_{R}^{0}u_{n}'+t_{n}(x)(\phi_{R}^{0})'u_{n}\vert^{p-2}t_{n}(x)(\phi_{R}^{0})'u_{n}\cdot(\phi_{R}^{0})'u_{n}d\lambda_{\alpha}(x)
\end{align*} and $0\leq t_{n}(x)\leq1$.

we get \begin{eqnarray*}
\vert\rho_{n,R}\vert&\leq& p\left[\int_{0}^{\infty}\vert\phi_{R}^{0}u_{n}'+t_{n}(x)(\phi_{R}^{0})'u_{n}\vert^{p}d\lambda_{\alpha}\right]^{\frac{p-1}{p}}\left[\int_{0}^{\infty}\vert(\phi_{R}^{0})'u_{n}\vert^{p}d
\lambda_{\alpha}\right]^{\frac{1}{p}}\\
&\leq& 2p [\Vert u_{n}'\Vert_{L_{\alpha}^{p}}+2\Vert u_{n}\Vert_{L_{\alpha}^{p}(R,R+1)}]^{p-1}\Vert u_{n}\Vert_{L_{\alpha}^{p}(R,R+1)}\\
&\leq&2p\left[1+2\Vert u_{n}\Vert_{L_{\alpha}^{p}(R,R+1)}\right]^{p-1}\Vert u_{n}\Vert_{L_{\alpha}^{p}(R,R+1)}.
\end{eqnarray*}
From compactness embedding, we have $\displaystyle\lim_{n\to\infty}\Vert u_{n}\Vert_{L_{\alpha}^{p}(R,R+1)}=\Vert u\Vert_{L_{\alpha}^{p}(R,R+1)}$. Thus,
\begin{eqnarray}\label{1.11.5}
\lim_{R\to\infty}\lim_{n\to\infty}\rho_{n,R}=0.
\end{eqnarray}
We conclude  (\ref{1.11.0}) (with $i=0$)  from (\ref{1.11.3}), (\ref{1.11.4}) and (\ref{1.11.5}).
The others cases follow from similar arguments. 
\end{proof}

Next, our goal is determining the normalized vanishing limit defined at (\ref{1.12.0}).

\begin{proposition}\label{1.13}It holds that	
\begin{eqnarray*}\label{1.13.1}
 d_{nvl}(p,\theta,\mu)=\left\{\begin{array}{cc}
 \displaystyle \frac{\mu^{p-1}}{(p-1)!},&\mbox{if}\,\ p \,\ \mbox{is integer} \\ 0, & \mbox{otherwise}. \\
\end{array}\right.
\end{eqnarray*}
\end{proposition}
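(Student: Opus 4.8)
The plan is to split the supremum defining $d_{nvl}$ in (\ref{1.12.0}) into an upper bound valid for every $NVS$ and a matching lower bound produced by an explicit sequence, and to observe that the whole dichotomy is governed by comparing the exponent $\frac{p}{p-1}\lfloor p\rfloor$ of the leading monomial of $A_{p,\mu}$ with the exponent $p$ of the $L_\theta^p$-norm. Since the defining convention makes $\lfloor p\rfloor=p-1$ when $p$ is an integer and $\lfloor p\rfloor>p-1$ otherwise, we have $\frac{p}{p-1}\lfloor p\rfloor=p$ exactly in the integer case and $\frac{p}{p-1}\lfloor p\rfloor>p$ in the non-integer case; this single fact produces the two branches of the formula.

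First I would reduce to nonincreasing competitors. Given a $NVS$ $(u_n)$, its half $\mu_l$-symmetrization (Theorem \ref{2}) leaves $\int_0^\infty A_{p,\mu}(|u_n|)\,d\lambda_\theta$ and $\|u_n\|_{L_\theta^p}$ unchanged while not increasing $\|u_n'\|_{L_\alpha^p}$; after renormalizing to restore $\|u_n\|_{W^{1,p}_{\alpha,\theta}}=1$ the value of the functional only increases, and the sequence stays vanishing with $u=0$ and $\nu_0=0$. Hence in (\ref{1.12.0}) we may assume each $u_n$ nonincreasing and invoke Lemma \ref{1.7}(ii), which, because $\|u_n\|_{L_\theta^p}\le1$, gives a \emph{uniform} decay $|u_n(x)|\le C\,x^{-(1+\theta)/p}$ with $C=C(p,\theta)$. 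Using $\nu_0=0$ and the same splitting as in (\ref{2.0.1}), namely $\lim_n\int_0^\infty A_{p,\mu}(|u_n|)\,d\lambda_\theta=\nu_0+\nu_\infty$, the problem collapses to estimating $\nu_\infty$.

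For the upper bound I would use that on the tail $x\ge R$ the uniform decay forces $|u_n(x)|\le\delta(R)$ with $\delta(R)\to0$, so that for small argument $A_{p,\mu}(t)\le \frac{\mu^{\lfloor p\rfloor}}{\lfloor p\rfloor!}\,t^{\frac{p}{p-1}\lfloor p\rfloor}\bigl(1+\varepsilon(R)\bigr)$, where $\varepsilon(R)\to0$ because every remaining monomial carries a strictly positive extra power of $t$ and the series tail is uniformly summable. When $p$ is an integer $\frac{p}{p-1}\lfloor p\rfloor=p$, so integrating against $d\lambda_\theta$ and letting $n\to\infty$, then $R\to\infty$, gives $\nu_\infty\le\frac{\mu^{p-1}}{(p-1)!}\,\eta_\infty\le\frac{\mu^{p-1}}{(p-1)!}$, the last inequality by $\eta_0+\eta_\infty\le1$ from (\ref{2.0.1}). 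The matching lower bound is precisely Example \ref{1.12}: a change of variables shows the only surviving term of $\int_0^\infty A_{p,\mu}(|\phi_n|)\,d\lambda_\theta$ is the $L_\theta^p$ one, since every higher power $s>p$ produces a factor $\lambda_n^{\sigma(1+\theta)(s/p-1)}\to0$, and that term tends to $\frac{\mu^{p-1}}{(p-1)!}$. When $p$ is not an integer, $\lfloor p\rfloor>p-1$ makes every exponent $\frac{p}{p-1}j$ (for $j\ge\lfloor p\rfloor$) strictly larger than $p$; inserting the uniform decay into $\int_R^\infty|u_n|^{\frac{p}{p-1}j}\,d\lambda_\theta$ dominates it by $\int_R^\infty x^{\theta-\frac{j(1+\theta)}{p-1}}\,dx$, whose exponent is $<-1$ because $j\ge\lfloor p\rfloor>p-1$, so it converges and tends to $0$ as $R\to\infty$. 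Thus $\nu_\infty=0$ and, with $\nu_0=0$, $d_{nvl}=0$.

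The main obstacle is the bookkeeping of the iterated limits $\lim_{R\to\infty}\lim_{n\to\infty}$ against the infinite series defining $A_{p,\mu}$: one must justify that the correction $\varepsilon(R)$ coming from the higher-order monomials is \emph{uniform in} $n$, so that it can be pulled out before passing to the limit in $n$, and this is exactly where the uniform pointwise decay $|u_n(x)|\le C\,x^{-(1+\theta)/p}$ is indispensable. The reduction to nonincreasing functions, which licenses Lemma \ref{1.7}(ii), is the other delicate point, since one must verify that symmetrization followed by renormalization preserves the defining properties $u=0$ and $\nu_0=0$ of a normalized vanishing sequence.
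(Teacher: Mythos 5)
Your proposal follows essentially the same route as the paper's own proof: reduction to nonincreasing competitors via the half symmetrization of Theorem \ref{2}, the uniform decay $|u_n(x)|\leq C(p,\theta)\,x^{-(1+\theta)/p}$ from Lemma \ref{1.7}, tail estimates splitting the leading monomial (exponent $\tfrac{p}{p-1}\lfloor p\rfloor$, equal to $p$ exactly when $p$ is an integer) from the higher-order ones, the bound $\eta_\infty\leq 1$ for the integer case, and Example \ref{1.12} as the matching lower bound. The only differences are presentational (your multiplicative $(1+\varepsilon(R))$ correction versus the paper's additive tail sum), and you are in fact slightly more careful than the paper in flagging that symmetrization plus renormalization must be checked to preserve the normalized-vanishing properties $u=0$ and $\nu_0=0$.
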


\begin{proof}
Again, we recall that we can suppose that $(u_{n})$ is nonincreasing, then by Lemma \ref{1.7}
\begin{eqnarray*}
|u_{n}(x)|\leq\left(\frac{1+\theta}{\omega_{\theta}}\right)^{\frac{1}{p}}\cdot\frac{1}{x^{\frac{1+\theta}{p}}}\left(\int_{0}^{\infty}|u_{n}(y)|^{p}d\lambda_{\theta}(y)\right).
\end{eqnarray*}	

Assume that $1\leq R<\infty$, then
\begin{eqnarray*}
\sum_{j=\lfloor p\rfloor+1}^{\infty}\frac{\mu^{j}}{j!}\int_{R}^{\infty}|u_{n}|^{\frac{p}{p-1}j}d\lambda_{\theta}&\leq&\sum_{j=\lfloor p\rfloor+1}^{\infty}\frac{\mu^{j}}{j!}\left(\frac{1+\theta}{\omega_{\theta}}
\right)^{\frac{j}{p-1}}\omega_{\theta}\int_{R}^{\infty}x^{\theta-\frac{(1+\theta)}{p-1}j}dx\\
&\leq &\sum_{j=\lfloor p\rfloor+1}^{\infty}\frac{\mu^{j}}{j!}\left(\frac{1+\theta}{\omega_{\theta}}\right)^{\frac{j}{p-1}}\frac{\omega_{\theta}(p-1)}{R^{(1+\theta)\left(\frac{j}{p-1}-1\right)}}\\
&\leq&\frac{\omega_{\theta}(p-1)}{R^{(1+\theta)\left(\frac{\lfloor p\rfloor+1}{p-1}-1\right)}}\sum_{j=\lfloor p\rfloor+1}^{\infty}\frac{\mu^{j}}{j!}\left(\frac{1+\theta}{\omega_{\theta}}\right)^{\frac{j}{p-1}}.
\end{eqnarray*}
Thus
\begin{eqnarray}\label{1.13.2}
\lim_{R\to\infty}\lim_{n\to\infty}\sum_{j=\lfloor p\rfloor+1}^{\infty}\frac{\mu^{j}}{j!}\int_{R}^{\infty}\vert u_{n}\vert^{\frac{p}{p-1}j}d\lambda_{\theta}=0.
\end{eqnarray}	
if $p$ is not integer, we get
\begin{eqnarray}\label{1.13.3}
\int_{R}^{\infty}\vert u_{n}\vert^{\frac{p}{p-1}\lfloor p\rfloor}d\lambda_{\theta}\leq\left(\frac{1+\theta}{\omega_{\theta}}\right)^{\frac{\lfloor p\rfloor}{p-1}}\frac{\omega_{\theta}(p-1)}{\left(\lfloor p\rfloor-
(p-1)\right)R^{(1+\theta)\left(\frac{\lfloor p\rfloor}{p-1}-1\right)}}.
\end{eqnarray}
Hence, using (\ref{1.13.2}) and (\ref{1.13.3}), we obtain $\nu_{\infty}=0$, if $p$ is not integer.

Now, if $p$ is integer, then $\lfloor p\rfloor=p-1$ and passing to subsequence if necessary, we have

\begin{eqnarray}\label{1.13.4}
\nu_{\infty}&=&\lim_{R\to\infty}\lim_{n\to\infty}\sum_{j=\lfloor p\rfloor+1}^{\infty}\frac{\mu^{j}}{j!}\int_{R}^{\infty}\vert u_{n}\vert^{\frac{p}{p-1}j}d\lambda_{\theta}\nonumber\\
&+&\lim_{R\to\infty}\lim_{n\to\infty}\frac{\mu^{p-1}}{(p-1)!}\left\Vert u_{n}\Vert\right._{L_{\theta}^{p}(R,\infty)}^{p}\nonumber\\
&=&\frac{\mu^{p-1}}{(p-1)!}\lim_{R\to\infty}\lim_{n\to\infty}\left\Vert u_{n}\Vert\right._{L_{\theta}^{p}(R,\infty)}^{p}\nonumber\\
&\leq& \frac{\mu^{p-1}}{(p-1)!}.
\end{eqnarray}

Taking $u_{n}:=\phi_{n}$ as in the Example \ref{1.12} we obtain (\ref{1.13.2}) as well. Besides, we get
\begin{eqnarray}\label{1.13.5}
\lim_{R\to\infty}\lim_{n\to\infty}\left\Vert u_{n}\Vert\right._{L_{\theta}^{p}(R,+\infty)}^{p}=\lim_{R\to\infty}\lim_{n\to\infty}\frac{\left\Vert \phi\Vert\right._{L_{\theta}^{p}(\lambda_{n}^{\sigma}R,\infty)}^{p}}{(1+
\lambda_{n}^{p\gamma}\lambda_{0})}=1.
\end{eqnarray} 

From (\ref{1.13.2}), (\ref{1.13.3}), (\ref{1.13.4}) and (\ref{1.13.5}) the proposition follows.

\end{proof}

\begin{proposition}\label{1.14} Let $p\geq 2$ be an integer number. Then 
	\begin{eqnarray*} 
	d(p,\theta,\mu)>
	\left\{\begin{array}{cl}
	\displaystyle\frac{\mu^{p-1}}{(p-1)!},&\mbox{if}\,\ p>2 \, \ \mbox{and} \, \ \mu\in(0,\mu_{\alpha,\theta}] \\
	&\\
	\displaystyle \frac{\mu^{p-1}}{(p-1)!}, & \mbox{if}\,\ p=2 \,\ \mbox{and}\,\ \mu\in\left(\frac{2}{B(2,\theta)},\mu_{\alpha,\theta}\right]. \\
	\end{array}\right.
	\end{eqnarray*}
\end{proposition}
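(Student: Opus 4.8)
The plan is to beat the vanishing level $\frac{\mu^{p-1}}{(p-1)!}$, which was identified as the normalized vanishing limit in Proposition \ref{1.13}, by producing an explicit \emph{spreading} family of admissible test functions. Fix a positive nonincreasing profile $\psi\in W^{1,p}_{\alpha,\theta}(0,\infty)$ and, for $\lambda>0$, set $u_\lambda(x)=c_\lambda\,\psi(\lambda x)$, where $c_\lambda>0$ is chosen so that $\Vert u_\lambda\Vert_{W^{1,p}_{\alpha,\theta}(0,\infty)}=1$. Since $d(p,\theta,\mu)\geq\int_0^\infty A_{p,\mu}(|u_\lambda|)\,d\lambda_\theta$ for every such $\lambda$, it suffices to exhibit one $\lambda$ for which the right-hand side exceeds $\frac{\mu^{p-1}}{(p-1)!}$.

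First I would record the scaling identities. Under $x\mapsto\lambda x$ one has $\Vert\psi(\lambda\cdot)\Vert_{L^q_\theta}^q=\lambda^{-(1+\theta)}\Vert\psi\Vert_{L^q_\theta}^q$, whereas the Dirichlet term is \emph{scale invariant} precisely because $\alpha=p-1$: indeed $\Vert(\psi(\lambda\cdot))'\Vert_{L^p_\alpha}^p=\lambda^{p-(1+\alpha)}\Vert\psi'\Vert_{L^p_\alpha}^p=\Vert\psi'\Vert_{L^p_\alpha}^p$. The normalization then forces $c_\lambda^p\sim\lambda^{1+\theta}/\Vert\psi\Vert_{L^p_\theta}^p$ as $\lambda\to0^+$, so that $\Vert u_\lambda\Vert_{L^p_\theta}^p\to1$ while $\Vert u_\lambda'\Vert_{L^p_\alpha}^p=O(\lambda^{1+\theta})\to0$; this is the spreading regime in which vanishing is approached.

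Next I would expand the functional. Writing $\Vert u_\lambda\Vert_{L^p_\theta}^p=1-\Vert u_\lambda'\Vert_{L^p_\alpha}^p$ and peeling off the first ($j=\lfloor p\rfloor=p-1$) term of the defining series of $A_{p,\mu}$ gives
\begin{equation*}
\int_0^\infty A_{p,\mu}(|u_\lambda|)\,d\lambda_\theta-\frac{\mu^{p-1}}{(p-1)!}=-\frac{\mu^{p-1}}{(p-1)!}\Vert u_\lambda'\Vert_{L^p_\alpha}^p+\sum_{j\geq p}\frac{\mu^j}{j!}\Vert u_\lambda\Vert_{L^{pj/(p-1)}_\theta}^{pj/(p-1)}.
\end{equation*}
The scaling identities show the generic term in the sum has order $\lambda^{(1+\theta)(j-(p-1))/(p-1)}$, so the dominant positive contribution as $\lambda\to0$ is the $j=p$ term, of order $\lambda^{(1+\theta)/(p-1)}$. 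For $p>2$ one has $(1+\theta)/(p-1)<1+\theta$, hence this positive term strictly dominates the negative deficit $O(\lambda^{1+\theta})$; choosing $\lambda$ small makes the difference positive, and this works for every $\mu\in(0,\mu_{\alpha,\theta}]$ and every admissible $\psi$.

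For $p=2$ the two competing terms share the same order $\lambda^{1+\theta}$, and the leading coefficient is proportional to $\frac{\mu^2}{2}\Vert\psi\Vert_{L^4_\theta}^4/\Vert\psi\Vert_{L^2_\theta}^4-\mu\Vert\psi'\Vert_{L^2_1}^2/\Vert\psi\Vert_{L^2_\theta}^2$, which is positive exactly when $\frac{\mu}{2}>\Vert\psi'\Vert_{L^2_1}^2\Vert\psi\Vert_{L^2_\theta}^2/\Vert\psi\Vert_{L^4_\theta}^4$; taking the infimum of the right side over $\psi$ yields $B(2,\theta)^{-1}$, so under the hypothesis $\mu>2/B(2,\theta)$ I can pick a nearly Gagliardo–Nirenberg-optimal profile making the coefficient positive, and small $\lambda$ again finishes the argument. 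The main obstacle I anticipate is making the asymptotic expansion rigorous: I must control the infinite tail $\sum_{j\geq p+1}$ uniformly and confirm it is genuinely lower order (using that $\mu\leq\mu_{\alpha,\theta}$ keeps every integral finite via Theorem \ref{0.1.9}), and in the case $p=2$ verify that a positive nonincreasing profile approaching the Gagliardo–Nirenberg extremal can in fact be selected inside $W^{1,2}_{1,\theta}(0,\infty)$.
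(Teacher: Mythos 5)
Your proposal is correct and follows essentially the same route as the paper: the paper tests $d(p,\theta,\mu)$ against the normalized scaling family $v_{t}(x)=t^{\gamma}v(t^{\sigma}x)$ with $\gamma p=\sigma(1+\theta)$ (identical to your $u_{\lambda}$ after normalization), keeps only the $j=p-1$ and $j=p$ terms of the series, and shows the resulting quotient $h_{p,\theta,\mu}(t)$ rises above its limit $1$ as $t\to 0^{+}$, with the same dichotomy ($p>2$ automatic for all $\mu\in(0,\mu_{\alpha,\theta}]$; $p=2$ requiring $\mu>2/B(2,\theta)$ via a Gagliardo--Nirenberg near-extremal, cf.\ Proposition \ref{1.1.3}). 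The only cosmetic difference is that the paper simply discards the nonnegative tail $\sum_{j>p}$ instead of estimating it, which also renders your anticipated ``main obstacle'' moot.
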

\begin{proof}
	Let $\gamma,\sigma$ be positive real numbers such that $\gamma p-\sigma(1+\theta)=0$ and let $v\in W^{1,p}_{\alpha,\theta}(0,\infty)$. We set 
	\begin{eqnarray*}
	v_{t}(x)=t^{\gamma}v(t^{\sigma}x),\, \ \mbox{for all}\, \ t, x\in (0,\infty).
	\end{eqnarray*}
	We get
	\begin{eqnarray*}
	\lefteqn{\int_{0}^{\infty}A_{p,\mu}\left(\frac{\vert v_{t}\vert}{\left\Vert v_{t}\right\Vert_{W^{1,p}_{\alpha,\theta}(0,\infty)}}\right)d\lambda_{\theta}}\\
	&\geq&\frac{\mu^{p-1}}{(p-1)!}\left[\frac{\left\Vert v\right\Vert_{L_{\theta}^{p}}^{p}}{\left\Vert v\right\Vert_{L_{\theta}^{p}}^{p}+t^{\gamma p}\left\Vert v'\right\Vert_{L_{\alpha}^{p}}^{p}}+\frac{\mu}{p}
	\frac{t^{\frac{p\gamma}{p-1}}\left\Vert v\right\Vert_{L_{\theta}^{\frac{p}{p-1}p}}^{\frac{p}{p-1}p}}{\left(\left\Vert v\right\Vert_{L_{\theta}^{p}}^{p}+t^{\gamma p}\left\Vert v'\right\Vert_{L_{\alpha}^{p}}^{p}
	\right)^{\frac{p}{p-1}}}\right]\\
	&:=&\frac{\mu^{p-1}}{(p-1)!}h_{p,\theta,\mu}(t).
	\end{eqnarray*}

Note that $\displaystyle\lim_{t\to 0}h_{p,\theta,\mu}(t)=1$. Thus, it is sufficient to show that $h_{p,\theta,\mu}'(t)>0$ for $0<t\ll 1$. 

Through straightforward calculation we obtain

\begin{eqnarray*}
h_{p,\theta,\mu}'(t)&=&\frac{p\gamma t^{\frac{\gamma p}{p-1}-1}}{\left(\left\Vert v\right\Vert_{L_{\theta}^{p}}^{p}+t^{\gamma p}\left\Vert v'\right\Vert_{L_{\alpha}^{p}}^{p}\right)^{2}}\nonumber\\
&\cdot&\left[\frac{\mu}{p(p-1)}\left\Vert v\right\Vert_{L_{\theta}^{\frac{p}{p-1}p}}^{\frac{p}{p-1}p}\right.\left(\left\Vert v\right\Vert_{L_{\theta}^{p}}^{p}+t^{\gamma p}\left\Vert v'\right\Vert_{L_{\alpha}^{p}}^{p}
\right)^{\frac{p-2}{p-1}}\nonumber\\
&-&\frac{p}{p-1}t^{\gamma p}\left\Vert v\right\Vert_{L_{\theta}^{\frac{p}{p-1}p}}^{\frac{p}{p-1}p}\left\Vert v'\right\Vert_{L_{\alpha}^{p}}^{p}\left(\left\Vert v\right\Vert_{L_{\theta}^{p}}^{p}+t^{\gamma p}\left\Vert
 v'\right\Vert_{L_{\alpha}^{p}}^{p}\right)^{-\frac{1}{p-1}}\nonumber\\
&-&\left.\left\Vert v\right\Vert_{L_{\theta}^{p}}^{p}\left\Vert v'\right\Vert_{L_{\alpha}^{p}}^{p}t^{\gamma p-\frac{\gamma p}{p-1}}\right]
\end{eqnarray*}
Thus we get $h_{p,\theta,\mu}'(t)>0$ for $0<t\ll 1$ if $p>2$. Now, for $p=2$ is a little bit different, because

\begin{eqnarray*}
h_{2,\theta,\mu}'(t)&=&\frac{2\gamma t^{2\gamma-1}}{\left(\left\Vert v\right\Vert_{L_{\theta}^{2}}^{2}+t^{2\gamma}\left\Vert v'\right\Vert_{L_{1}^{2}}^{2}\right)^{2}}\nonumber\\
&\cdot&\left[\frac{\mu}{2}\left\Vert v\right\Vert_{L_{\theta}^{4}}^{4}-\frac{2t^{2\gamma}\left\Vert v\right\Vert_{L_{\theta}^{4}}^{4}\left\Vert v'\right\Vert_{L_{1}^{2}}^{2}}{\left(\left\Vert v\right\Vert_{L_{\theta}
^{2}}^{2}+t^{2\gamma}\left\Vert v'\right\Vert_{L_{1}^{2}}^{2}\right)}-\left\Vert v\right\Vert_{L_{\theta}^{2}}^{2}\left\Vert v'\right\Vert_{L_{1}^{2}}^{2}\right].
\end{eqnarray*}
Taking $v\in W^{1,p}_{\alpha,\theta}(0,\infty)$ such that $B(2,\theta)^{-1}=B(v)^{-1}$, we obtain $h_{2,\theta,\mu}'(t)>0$ for $0<t\ll 1$, if $\frac{2}{B(2,\theta)}<\mu\leq 2\pi(1+\theta)$, [see Proposition 
\ref{1.1.3}].
\end{proof}

\begin{lemma}\label{1.16}
	Let $u_{i}<1 \, \ (i=0,\infty)$ and let $p\geq 2$ be an integer. Then we obtain
	
	\begin{eqnarray*}\label{1.16.0}
	\lefteqn{d(p,\theta,\mu)\left\Vert u_{n,R}^{i}\right\Vert_{W^{1,p}_{\alpha,\theta}(0,\infty)}^{p}}\nonumber\\
	&\geq&\int_{0}^{\infty}A_{p,\mu}\left(\left\vert u_{n,R}^{i}\right\vert\right)d\lambda_{\theta}+\left[\left(\frac{1}{\left\Vert u_{n,R}^{i}\right\Vert_{W^{1,p}_{\alpha,\theta}(0,\infty)}^{\frac{p}{p-1}}}-1\right)
	\right.\nonumber\\
	&\cdot&\left.\int_{0}^{\infty}A_{p,\mu}\left(\left\vert u_{n,R}^{i}\right\vert\right)-\frac{\mu^{p-1}}{(p-1)!}\left\vert u_{n,R}^{i}\right\vert^{p}d\lambda_{\theta}\right].
	\end{eqnarray*}
	whenever $n$ and $R$ are sufficiently large.
	\end{lemma}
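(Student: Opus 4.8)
The plan is to derive the inequality straight from the definition of the best constant $d(p,\theta,\mu)$ combined with an elementary term-by-term comparison of the Taylor tail of $A_{p,\mu}$. Write $N:=\Vert u_{n,R}^i\Vert_{W^{1,p}_{\alpha,\theta}(0,\infty)}$ for brevity. The standing hypothesis $\mu_i<1$ (displayed as $u_i<1$ in the statement) together with the identity $\mu_i=\lim_{R}\lim_n N^p$ coming from (\ref{1.11.0}) of Lemma \ref{1.11} forces $N<1$ once $n$ and $R$ are large enough; this is exactly where the clause "$n$ and $R$ sufficiently large" is spent. Applying the definition (\ref{0.1.11}) of $d(p,\theta,\mu)$ to the admissible function $u_{n,R}^i$ and multiplying through by $N^p$ yields the single reduction
\begin{equation*}
d(p,\theta,\mu)\,N^p\;\geq\;N^p\int_0^\infty A_{p,\mu}\!\left(\frac{|u_{n,R}^i|}{N}\right)d\lambda_\theta,
\end{equation*}
so it remains only to bound this right-hand side below by the claimed expression.

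For the comparison I would use that $p$ is an integer, so $\lfloor p\rfloor=p-1$ and $A_{p,\mu}(t)=\frac{\mu^{p-1}}{(p-1)!}t^p+B_{p,\mu}(t)$, where $B_{p,\mu}(t)=\sum_{j\ge p}\frac{\mu^j}{j!}t^{\frac{p}{p-1}j}$ is the nonnegative tail series introduced after Lemma \ref{2.1.0}. Substituting $t=|u_{n,R}^i|/N$ and multiplying by $N^p$, the lowest term $j=p-1$ carries the scale-invariant factor $N^{\,p-\frac{p}{p-1}(p-1)}=N^0=1$ and reproduces $\frac{\mu^{p-1}}{(p-1)!}|u_{n,R}^i|^p$ verbatim, while each higher term $j\ge p$ acquires a factor $N^{\,p-\frac{p}{p-1}j}$. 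The crucial elementary fact is that for every integer $j\ge p$ one has $p-\frac{p}{p-1}(j-1)\le 0$, whence, because $N<1$,
\begin{equation*}
N^{\,p-\frac{p}{p-1}j}\;=\;N^{-\frac{p}{p-1}}\,N^{\,p-\frac{p}{p-1}(j-1)}\;\geq\;N^{-\frac{p}{p-1}},
\end{equation*}
with equality at $j=p$. This extracts a common factor $N^{-p/(p-1)}$ from the entire tail and gives, pointwise,
\begin{equation*}
N^p A_{p,\mu}\!\left(\frac{|u_{n,R}^i|}{N}\right)\;\geq\;\frac{\mu^{p-1}}{(p-1)!}\,|u_{n,R}^i|^p+\frac{1}{N^{p/(p-1)}}\,B_{p,\mu}\!\left(|u_{n,R}^i|\right).
\end{equation*}

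Integrating against $d\lambda_\theta$ (the interchange of the nonnegative sum with the integral raises no issue, finiteness being granted by Theorem \ref{0.1.9}) and regrouping, the right-hand side equals $\int_0^\infty A_{p,\mu}(|u_{n,R}^i|)\,d\lambda_\theta+\big(N^{-p/(p-1)}-1\big)\int_0^\infty B_{p,\mu}(|u_{n,R}^i|)\,d\lambda_\theta$; since $B_{p,\mu}(t)=A_{p,\mu}(t)-\frac{\mu^{p-1}}{(p-1)!}t^p$, this is precisely the bracketed quantity in the statement, and combining with the reduction of the first paragraph closes the argument. The only genuinely delicate point is the sign analysis of the exponent $p-\frac{p}{p-1}(j-1)$ and the ensuing monotonicity in $N$: it is here that both the integrality of $p$ (so the subtracted Taylor segment terminates exactly at $j=p-1$ and the tail begins at $j=p$) and the normalization $N<1$ are indispensable, the remainder being a bookkeeping rearrangement that is legitimate term by term.
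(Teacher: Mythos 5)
Your proposal is correct and follows essentially the same route as the paper: both apply the definition of $d(p,\theta,\mu)$ to the admissible function $u_{n,R}^{i}$, use that for integer $p$ the lowest term $j=p-1$ scales exactly like $N^{p}$, and exploit $N<1$ (obtained from $\mu_{i}<1$ via Lemma \ref{1.11} for $n,R$ large) to bound every tail term $j\geq p$ below by the common factor $N^{-p/(p-1)}$ attained at $j=p$. The only difference is cosmetic: the paper performs this bookkeeping on the series of $L^{q}_{\theta}$-norms, while you do it term by term in the integrand and then integrate.
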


\begin{proof}
	By definition, we have
	\begin{eqnarray}\label{1.16.1}
	\lefteqn{d(\alpha,\theta,\mu)\geq\sum_{j=p-1}^{\infty}\frac{\mu^{j}}{j!}\frac{\Vert u_{n,R}^{i}\Vert_{L_{\theta}^{\frac{p}{p-1}j}}^{\frac{p}{p-1}j}}{\Vert u_{n,R}^{i}\Vert_{W^{1,p}_{\alpha,\theta}(0,\infty)}
	^{\frac{p}{p-1}j}}}\nonumber\\
	&\geq&\frac{1}{\Vert u_{n,R}^{i}\Vert_{W^{1,p}_{\alpha,\theta}(0,\infty)}^{p}}\sum_{j=p-1}^{\infty}\frac{\mu^{j}}{j!}\Vert u_{n,R}^{i}\Vert_{L_{\theta}^{\frac{jp}{p-1}}}^{\frac{jp}{p-1}}\nonumber\\
	&+&\frac{1}{\Vert u_{n,R}^{i}\Vert_{W^{1,p}_{\alpha,\theta}(0,\infty)}^{p}}\sum_{j=p}^{\infty}\left(\frac{1}{\Vert u_{n,R}^{i}\Vert_{W^{1,p}_{\alpha,\theta}(0,\infty)}^{\frac{p}{p-1}(j-(p-1))}}-1\right)
	\frac{\mu^{j}}{j!}\Vert u_{n,R}^{i}\Vert_{L_{\theta}^{\frac{jp}{p-1}}}^{\frac{jp}{p-1}}
	\end{eqnarray}
	From $\mu_{i}<1$ and (\ref{1.16.1}) we obtain
	\begin{eqnarray*}
	\lefteqn{d(\alpha,\theta,\mu)\Vert u_{n,R}^{i}\Vert_{W^{1,p}_{\alpha,\theta}(0,\infty)}^{p}}\nonumber\\
	&\geq&\sum_{j=p-1}^{\infty}\frac{\mu^{j}}{j!}\Vert u_{n,R}^{i}\Vert_{L_{\theta}^{\frac{p}{p-1}j}}^{\frac{p}{p-1}j}+\sum_{j=p}^{\infty}\left(\frac{1}{\Vert u_{n,R}^{i}\Vert_{W^{1,p}_{\alpha,\theta}(0,\infty)}
	^{\frac{p}{p-1}}}-1\right)\frac{\mu^{j}}{j!}\Vert u_{n,R}^{i}\Vert_{L_{\theta}^{\frac{p}{p-1}j}}^{\frac{p}{p-1}j}\nonumber\\
	&=&\int_{0}^{\infty}A_{p,\mu}(\vert u_{n,R}^{i}\vert)d\lambda_{\theta}\nonumber\\
	&+&\left(\frac{1}{\Vert u_{n,R}^{i}\Vert_{W^{1,p}_{\alpha,\theta}(0,\infty)}^{\frac{p}{p-1}}}-1\right)\int_{0}^{\infty}A_{p,\mu}(\vert u_{n,R}^{i}\vert)-\frac{\mu^{p-1}}{(p-1)!}\vert u_{n,R}^{i}\vert^{p}d
	\lambda_{\theta}
	\end{eqnarray*}
	for large $R$ and large $n$.
\end{proof} 

\begin{proposition}\label{1.17} Let $p\geq2$ be an integer. Then
	\begin{eqnarray*}
	(\mu_{0},\nu_{0})=(1,d(p,\theta,\mu))\, \ \mbox{and}\, \ (\mu_{\infty},\nu_{\infty})=(0,0).
	\end{eqnarray*}
\end{proposition}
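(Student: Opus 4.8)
The plan is to argue by contradiction, assuming that some mass escapes to infinity, i.e. $\mu_\infty>0$, equivalently $\mu_0=1-\mu_\infty<1$, where I use the normalization relations $\mu_0+\mu_\infty=1$ and $\nu_0+\nu_\infty=d(p,\theta,\mu)$ from (\ref{2.0.1}). The single structural fact that drives everything is that the tail carries \emph{only} the lowest-order term: reading off the proof of Proposition \ref{1.13}, precisely the vanishing of the higher-order sum (\ref{1.13.2}), one obtains the exact identity $\nu_\infty=\frac{\mu^{p-1}}{(p-1)!}\eta_\infty$, so that the ``correction'' $\nu_\infty-\frac{\mu^{p-1}}{(p-1)!}\eta_\infty$ attached to the tail is \emph{zero}, whereas the analogous correction for the bounded part is nonnegative and will turn out to be strictly positive unless the weak limit is trivial. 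Once $\mu_0=1$ is established the conclusion is immediate, since $\eta_\infty\le\mu_\infty=0$ forces $\nu_\infty=\frac{\mu^{p-1}}{(p-1)!}\eta_\infty=0$ and hence $\nu_0=d(p,\theta,\mu)$.

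First I would dispose of the pure vanishing case $\mu_0=0$. Then $u_n\to 0$ in $W^{1,p}_{\alpha,\theta}(0,R)$ for every $R$, so the weak limit is $u\equiv0$ and $\nu_0=0$; combining $d(p,\theta,\mu)=\nu_0+\nu_\infty$ with $\nu_\infty=\frac{\mu^{p-1}}{(p-1)!}\eta_\infty\le\frac{\mu^{p-1}}{(p-1)!}$ (using $\eta_\infty\le\mu_\infty\le1$) yields $d(p,\theta,\mu)\le\frac{\mu^{p-1}}{(p-1)!}$, which contradicts the strict inequality of Proposition \ref{1.14}. This is exactly where the genuinely nontrivial input, namely the strict gap between $d(p,\theta,\mu)$ and the normalized vanishing limit $\frac{\mu^{p-1}}{(p-1)!}$ of Proposition \ref{1.13}, gets consumed.

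It remains to rule out the dichotomy $0<\mu_0<1$. Here I would apply Lemma \ref{1.16} to both $u_{n,R}^{0}$ and $u_{n,R}^{\infty}$ (both satisfy $\mu_i<1$), let $n\to\infty$ and then $R\to\infty$, and use Lemma \ref{1.11} to replace the truncated quantities by $\mu_i,\nu_i,\eta_i$. Writing $D_0:=\nu_0-\frac{\mu^{p-1}}{(p-1)!}\eta_0\ge0$, the two limiting inequalities read
\begin{equation*}
d(p,\theta,\mu)\,\mu_\infty\ \ge\ \nu_\infty,\qquad d(p,\theta,\mu)\,\mu_0\ \ge\ \nu_0+\Bigl(\mu_0^{-\frac{1}{p-1}}-1\Bigr)D_0,
\end{equation*}
the first using that the tail correction $\nu_\infty-\frac{\mu^{p-1}}{(p-1)!}\eta_\infty$ vanishes. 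Adding the two and invoking $\mu_0+\mu_\infty=1$ and $\nu_0+\nu_\infty=d(p,\theta,\mu)$ gives $0\ge\bigl(\mu_0^{-1/(p-1)}-1\bigr)D_0$; since $\mu_0<1$ makes the prefactor strictly positive and $D_0\ge0$, this forces $D_0=0$, after which both inequalities are equalities, in particular $d(p,\theta,\mu)\,\mu_0=\nu_0$.

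The last step, and the place where I expect to have to be careful, is to turn $D_0=0$ into a contradiction. By Lemma \ref{2.1.0} together with the compact embedding of Lemma \ref{1.7}, the quantity $D_0$ equals $\int_0^\infty\bigl(A_{p,\mu}(|u|)-\frac{\mu^{p-1}}{(p-1)!}|u|^{p}\bigr)\,d\lambda_\theta$, the integral of the manifestly nonnegative higher-order tail $\sum_{j\ge p}\frac{\mu^{j}}{j!}|u|^{\frac{p}{p-1}j}$; hence $D_0=0$ forces $u\equiv0$. But then $\eta_0=\|u\|_{L^p_\theta(0,\infty)}^{p}=0$ and $\nu_0=0$, so the equality $d(p,\theta,\mu)\,\mu_0=\nu_0=0$ gives $\mu_0=0$, contradicting $0<\mu_0$. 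Thus neither $\mu_0=0$ nor $0<\mu_0<1$ can occur, so $\mu_0=1$ and $\mu_\infty=0$, whence $(\mu_0,\nu_0)=(1,d(p,\theta,\mu))$ and $(\mu_\infty,\nu_\infty)=(0,0)$. The main subtlety throughout is the interchange of the limits $n\to\infty$ and $R\to\infty$ and the identification of $\nu_0,\eta_0,D_0$ with integrals of the weak limit $u$, all of which rest on Lemmas \ref{1.11}, \ref{1.7} and \ref{2.1.0}.
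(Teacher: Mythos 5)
Your proof is correct, and its skeleton is the paper's: both argue by contradiction through the trichotomy $\mu_0=0$, $0<\mu_0<1$, $\mu_0=1$; both pass to the double limit in Lemma \ref{1.16} via Lemma \ref{1.11} to obtain $d(p,\theta,\mu)\mu_i\geq \nu_i+\bigl(\mu_i^{-1/(p-1)}-1\bigr)\bigl[\nu_i-\tfrac{\mu^{p-1}}{(p-1)!}\eta_i\bigr]$; and both consume Propositions \ref{1.13} and \ref{1.14} to exclude vanishing. Where you genuinely diverge is in closing the dichotomy case $0<\mu_0<1$. The paper stays entirely at the level of the components: it drops the (nonnegative, by (\ref{2.0.1})) bracket to get $d\mu_i\geq\nu_i$, sums to force $d\mu_i=\nu_i$, feeds this back into the inequality to get $\nu_i\leq\tfrac{\mu^{p-1}}{(p-1)!}\eta_i$ for \emph{both} $i$, and then sums once more, using $\eta_0+\eta_\infty\leq 1$, to reach $d\leq\tfrac{\mu^{p-1}}{(p-1)!}$, contradicting Proposition \ref{1.14} a second time. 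You instead sharpen the $i=\infty$ inequality to the exact identity $\nu_\infty=\tfrac{\mu^{p-1}}{(p-1)!}\eta_\infty$ — legitimately extracted from (\ref{1.13.2}), whose proof uses only monotonicity and $\Vert u_n\Vert_{L^p_\theta}\leq 1$, hence applies to the maximizing sequence — so that summation isolates the bounded-part defect $D_0$, and you then identify $D_0$ with $\int_0^\infty\bigl(A_{p,\mu}(|u|)-\tfrac{\mu^{p-1}}{(p-1)!}|u|^p\bigr)d\lambda_\theta$ to conclude $u\equiv 0$ and hence the contradiction $\mu_0=0$. This identification is the one step you should make fully explicit: it needs Lemma \ref{2.1.0} \emph{together with} either the uniform-in-$n$ tail bound of type (\ref{1.13.2}) or a Fatou argument on $(0,R)$, not merely the compact embedding; but it is justifiable with the tools at hand, so this is a presentational point, not a gap. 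What your route buys: in the dichotomy case you only need $d(p,\theta,\mu)>0$ rather than the strict gap of Proposition \ref{1.14} (which you still need, as does the paper, for the case $\mu_0=0$), and your final step ($\mu_\infty=0$ forces $\nu_\infty=\tfrac{\mu^{p-1}}{(p-1)!}\eta_\infty=0$ since $\eta_\infty\leq\mu_\infty$) is cleaner than the paper's re-run of the Lemma \ref{1.16} argument. What it costs: dependence on Lemma \ref{2.1.0} and on identifying limit quantities with integrals of the weak limit, which the paper's purely componentwise bookkeeping avoids.
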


\begin{proof}
	By contradiction, supposse that $0<\mu_{0}<1$. Then $0<\mu_{\infty}<1$, by relation (\ref{2.0.1}).
	From Lemma \ref{1.11} and Lemma  \ref{1.16} we have
	\begin{eqnarray}\label{1.17.0}
	d(\alpha,\theta,\mu)\mu_{i}\geq\nu_{i}+\left[\frac{1}{\mu_{i}^{\frac{1}{p-1}}}-1\right]\left[\nu_{i}-\frac{\mu^{p-1}}{(p-1)!}\eta_{i}\right].
	\end{eqnarray}
	By relation (\ref{2.0.1}) and together with (\ref{1.17.0}) we get
	\begin{eqnarray*}
	d(\alpha,\theta,\mu)\mu_{i}\geq\nu_{i},\,\  \mbox{for}\, \ i=0,\infty.
	\end{eqnarray*}

Thus, \begin{eqnarray*}
d(\alpha,\theta,\mu)=d(\alpha,\theta,\mu)(\mu_{0}+\mu_{\infty})\geq \nu_{0}+\nu_{\infty}=d(\alpha,\theta,\mu)
\end{eqnarray*}
and consequently
\begin{eqnarray*}
d(\alpha,\theta,\mu)\mu_{i}=\nu_{i}.
\end{eqnarray*}
From the last relation and (\ref{1.17.0}) we obtain
\begin{eqnarray*}\label{1.17.1}
\nu_{i}\leq\frac{\mu^{p-1}}{(p-1)!}\eta_{i},
\end{eqnarray*}
whence \begin{eqnarray*}
d(\alpha,\theta,\mu)=\nu_{0}+\nu_{\infty}\leq\frac{\mu^{p-1}}{(p-1)!}(\eta_{0}+\eta_{\infty})\leq\frac{\mu^{p-1}}{(p-1)!},
\end{eqnarray*}
which contradicts the Proposition \ref{1.14}.

Now, again, by contradiction, suppose that $\mu_{0}=0$. Thus, by Lemma \ref{1.16}
\begin{eqnarray}\label{1.17.2}
\lefteqn{d(p,\theta,\mu)\left\Vert u_{n,R}^{0}\right\Vert_{W^{1,p}_{\alpha,\theta}(0,\infty)}^{p}}\nonumber\\
&\geq&\int_{0}^{\infty}A_{p,\mu}\left(\left\vert u_{n,R}^{0}\right\vert\right)d\lambda_{\theta}\nonumber\\
&+&\frac{1}{2}\int_{0}^{\infty}\left(A_{p,\mu}\left(\left\vert u_{n,R}^{0}\right\vert\right)-\frac{\mu^{p-1}}{(p-1)!}\left\vert u_{n,R}^{0}\right\vert^{p}\right)d\lambda_{\theta}.
\end{eqnarray}
for large $R$ and large $n$.

Taking the double limit in (\ref{1.17.2}), $\lim_{R\to\infty}\lim_{n\to\infty}$,  we obtain
\begin{eqnarray*}
d(\alpha,\theta,\mu)\mu_{0}\geq\nu_{0}+\frac{1}{2}\left(\nu_{0}-\frac{\mu^{p-1}}{(p-1)!}\eta_{0}\right)\geq \nu_{0},
\end{eqnarray*}
hence $\nu_{0}=0$ from relation (\ref{2.0.1}), and $\mu_{0}=0$, getting a contradiction from Proposition \ref{1.13}, relation (\ref{2.0.1}), and Proposition \ref{1.14}.
Finally, using the same arguments we can get $\nu_{\infty}=0$ whenever $\mu_{\infty}=0$. Therefore, the proposition follows.	
\end{proof}

\noindent\textbf{Proof of Theorem \ref{0.1.15}.}

First of all, we will show that
\begin{eqnarray}\label{1.19.1}
\lim_{n\to\infty}\int_{0}^{\infty}|u_{n}|^{\frac{p}{p-1}\lfloor p\rfloor} d\lambda_{\theta}=\int_{0}^{\infty}|u|^{\frac{p}{p-1}\lfloor p\rfloor}d\lambda_{\theta}.
\end{eqnarray}

Indeed, given $R>0$, note that
\begin{eqnarray*}
\left|\int_{0}^{\infty}\left(|u_{n}|^{\frac{p}{p-1}\lfloor p\rfloor}-|u|^{\frac{p}{p-1}\lfloor p\rfloor}\right) d\lambda_{\theta}\right|&\leq& \left|\int_{0}^{R}\left(|u_{n}|^{\frac{p}{p-1}\lfloor p\rfloor}-|u|^{\frac{p}
{p-1}\lfloor p\rfloor}\right) d\lambda_{\theta}\right|\\
&+&\int_{R}^{\infty}|u_{n}|^{\frac{p}{p-1}\lfloor p\rfloor} d\lambda_{\theta}+\int_{R}^{\infty}|u|^{\frac{p}{p-1}\lfloor p\rfloor} dx\\
&=:&I(n, R)+II(n, R)+III(R).
\end{eqnarray*}
By compact embedding we have $\lim_{R\to\infty}\lim_{n\to\infty}I(n,R)=0$. From Dominated Convergence Theorem, we obtain $\lim_{R\to\infty}\lim_{n\to\infty}III(R)=0$. If $p$ is an integer, we get 
$\lim_{R\to\infty}\lim_{n\to\infty}II(n, R)=0$ from $\mu_{\infty}=0$ (Proposition \ref{1.17}). If $p\notin\mathbb{N}$, we obtain $\lim_{R\to\infty}\lim_{n\to\infty}II(n, R)=0$ from inequality (\ref{1.13.3}). 
 Hence, (\ref{1.19.1}) follows.
Now, assume that either $p>2$ and $\mu\in(0,\mu_{\alpha,\theta})$ or $p=2$ and $\alpha\in(2/B(2,\theta),\mu_{1,\theta})$. Writing
\begin{eqnarray*}\label{1.19.2}
d(p,\theta,\mu)-\int_{0}^{\infty} A_{p,\mu}\left(\left\vert u\right\vert\right)d\lambda_{\theta}&=&\int_{0}^{\infty}A_{p,\mu}\left(\left\vert u_{n}\right\vert\right)d\lambda_{\theta}-\int_{0}^{\infty}A_{p,\mu}\left(\left
\vert u\right\vert\right)d\lambda_{\theta}\nonumber\\
&+&\left(d(p,\theta,\mu)-\int_{0}^{\infty}A_{p,\mu}\left(\left\vert u_{n}\right\vert\right)d\lambda_{\theta} \right)\nonumber\\
&=:&IV(n)+V(n),
\end{eqnarray*}
where,
\begin{eqnarray*}
V(n):=d(p,\theta,\mu)-\int_{0}^{\infty}A_{p,\mu}\left(\left \vert u_{n}\right\vert\right)d\lambda_{\theta}
\end{eqnarray*}
and
\begin{eqnarray*}
IV(n):=\int_{0}^{\infty}A_{p,\mu}\left(\left\vert u_{n}\right\vert\right)d\lambda_{\theta}-\int_{0}^{\infty}A_{p,\mu}\left(\left\vert u\right\vert\right)d\lambda_{\theta}.
\end{eqnarray*}
We get, by definition of $d(\alpha,\theta,\mu)$, that
\begin{eqnarray*}
\lim_{R\to\infty}\lim_{n\to\infty} V(n)=0.
\end{eqnarray*}

Since
\begin{align*}
IV(n)&=\int_{0}^{\infty}\left(A_{p,\mu}\left(\left\vert u_{n}\right\vert\right)-\frac{\mu^{\lfloor p\rfloor}}{\lfloor p\rfloor!}\left\vert u_{n}\right\vert^{\frac{p}{p-1}\lfloor p\rfloor}\right) d\lambda_{\theta}\nonumber\\
&-\int_{0}^{\infty}\left(A_{p,\mu}\left(\left\vert u\right\vert\right)-\frac{\mu^{\lfloor p\rfloor}}{\lfloor p\rfloor!}\left\vert u\right\vert^{\frac{p}{p-1}\lfloor p\rfloor}\right) d\lambda_{\theta}\\
&+\frac{\mu^{\lfloor p\rfloor}}{\lfloor p\rfloor!}\int_{0}^{\infty}\left(\left\vert u_{n}\right\vert^{\frac{p}{p-1}\lfloor p\rfloor}-\left\vert u\right\vert^{\frac{p}{p-1}\lfloor p\rfloor}\right) d\lambda_{\theta}
\end{align*}
From Lemma \ref{2.1.0} and relation (\ref{1.19.1}) we obtain
\begin{align*}
\lim_{n\to\infty} IV(n)=0.
\end{align*}

Now, we assert that $\| u\|_{W^{1,p}_{p-1,\theta}(0,\infty)}=1$. Indeed, on the one hand, 
\[\left\Vert u\right\Vert_{W^{1,p}_{p-1,\theta}(0,\infty)}\leq\liminf_{n\to\infty}\left\Vert u_{n}\right\Vert_{W^{1,p}_{p-1,\theta}(0,\infty)}= 1.\]
 On the other hand,
\begin{eqnarray*}
d(p,\theta,\mu)&\geq&\int_{0}^{\infty}A_{p,\mu}\left(\left(\frac{\left\vert u\right\vert}{\left\Vert u\right\Vert_{W^{1,p}_{p-1,\theta}(0,\infty)}}\right)\right)d\lambda_{\theta}\\
&=&\sum_{j=\lfloor p\rfloor}^{\infty}\frac{\mu^{j}}{j!}\frac{\left\Vert u\right\Vert_{L_{\theta}^{\frac{p}{p-1}j}}^{\frac{p}{p-1}j}}{\left\Vert u\right\Vert_{W^{1,p}_{p-1,\theta}(0,\infty)}^{\frac{p}{p-1}j}}\nonumber\\
&\geq&\frac{1}{\left\Vert u\right\Vert_{W^{1,p}_{p-1,\theta}(0,\infty)}^{\frac{p}{p-1}\lfloor p\rfloor}}\sum_{j=\lfloor p\rfloor}^{\infty}\frac{\mu^{j}}{j!}\left\Vert u\right\Vert_{L_{\theta}^{\frac{p}{p-1}j}}^{\frac{p}{p-1}
j}\\
&\geq&\frac{1}{\left\Vert u\right\Vert_{W^{1,p}_{p-1,\theta}(0,\infty)}^{\frac{p}{p-1}\lfloor p\rfloor}}\cdot d(p,\theta,\mu).
\end{eqnarray*}
Therefore, $\left\Vert u\right\Vert_{W^{1,p}_{p-1,\theta}(0,\infty)}=1$ and \begin{eqnarray*}
d(p,\theta,\mu)=\int_{0}^{\infty}A_{p,\mu}\left(\left\vert u\right\vert\right)d\lambda_{\theta}.
\end{eqnarray*}


\section{Proof of the Theorem \ref{0.1.15}}
\label{sec:6}
Throughout this section, we assume that $p=2$ and $\mu\leq\pi(1+\theta)/3$. 

By Theorem \ref{P.0.1.7} (inequality (\ref{0.1.7.0})), we get
\begin{eqnarray}\label{4.0.1}
\frac{\Vert u\Vert_{L_{\theta}^{2j}}^{2j}}{\Vert u'\Vert_{L_{1}^{2}}^{2}\cdot\Vert u\Vert_{L_{\theta}^{2}}^{2}}\leq C_{\gamma,2,\theta}\frac{j!}{\gamma^{j}}\Vert u'\Vert_{L_{1}^{2}}^{2(j-2)}
\end{eqnarray}
for all $u\in W^{1,2}_{1,\theta}(0,\infty)$, $j\in\mathrm{N}$, and $0<\gamma<(1+\theta)\omega_{1}$. We are going to use the inequality (\ref{4.0.1}) to prove the Theorem \ref{0.1.15}.

\noindent\textbf{Proof of Theorem \ref{0.1.15}}

Let $S:=\{v\in W^{1,2}_{1,\theta}(0,\infty): \Vert v\Vert_{W^{1,2}_{1,\theta}(0,\infty)}=1\}$. For each $u\in S$, we define a family of functions by
\begin{eqnarray*}\label{4.0.2}
u_{t}(x):=t^{\frac{1}{2}}u(t^{\frac{1}{1+\theta}}x),
\end{eqnarray*}	
where $t>0$ is a parameter. Besides, let $v_{t}:= u_{t}/\Vert u_{t}\Vert_{W^{1,2}_{1,\theta}(0,\infty)}$. Thus $v_{t}$ is a curve in $S$ passing through $u$ when $t=1$. Then it is sufficient to show that

\begin{eqnarray*}\label{4.0.3}
\frac{d}{dt}F(v_{t})\left\|_{t=1}<0,\right.
\end{eqnarray*}
where $F(w):=\displaystyle\int_{0}^{\infty} A_{2,\mu}\left(w(x)\right)d\lambda_{\theta}(x)$.

Through a direct calculation we have that $\Vert u_{t}\Vert_{L_{\theta}^{2j}}^{2j}=t^{j-1}\Vert u\Vert_{L_{\theta}^{2j}}^{2j}$, $\Vert (u_{t})'\Vert_{L_{1}^{2}}^{2}=t\Vert u'\Vert_{L_{1}^{2}}^{2}$ and
\begin{align*}
F(v_{t})=
\sum_{j=1}^{\infty}\frac{\mu^{j}}{j!}\frac{t^{j-1}\Vert u\Vert_{L_{\theta}^{2j}}^{2j}}{\left(\Vert u\Vert_{L_{\theta}^{2}}^{2}+t\Vert u'\Vert_{L_{1}^{2}}^{2}\right)^{j}}.
\end{align*}
Since \begin{eqnarray*}
\lefteqn{\frac{d}{dt}F(v_{t})=}\\
&\sum_{j=1}^{\infty}\frac{\mu^{j}}{j!}\frac{(j-1)t^{j-2}\Vert u\Vert_{L_{\theta}^{2j}}^{2j}\left(\Vert u\Vert_{L_{\theta}^{2}}^{2}+t\Vert u'\Vert_{L_{1}^{2}}^{2}\right)^{j}-j t^{j-1}\Vert u'\Vert_{L_{1}^{2}}^{2}\Vert u
\Vert_{L_{\theta}^{2j}}^{2j}\left(\Vert u\Vert_{L_{\theta}^{2}}^{2}+t\Vert u'\Vert_{L_{1}^{2}}^{2}\right)^{j-1}}{\left(\Vert u\Vert_{L_{\theta}^{2}}^{2}+t\Vert u'\Vert_{L_{1}^{2}}^{2}\right)^{2j}}
\end{eqnarray*}
we obtain
\begin{align}\label{4.0.4}
\frac{d}{dt}F(v_{t})\left\|_{t=1}\right.&=\sum_{j=1}^{\infty}\frac{\mu^{j}}{j!}\Vert u\Vert_{L_{\theta}^{2j}}^{2j}\left[(j-1)\Vert u\Vert_{L_{\theta}^{2}}^{2}-\Vert u'\Vert_{L_{1}^{2}}^{2}\right]\nonumber\\
&=-\mu\Vert u\Vert_{L_{\theta}^{2}}^{2}\Vert u'\Vert_{L_{1}^{2}}^{2}+\sum_{j=2}^{\infty}\frac{\mu^{j}}{j!}\Vert u\Vert_{L_{\theta}^{2j}}^{2j}\left[(j-1)\Vert u\Vert_{L_{\theta}^{2}}^{2}-\Vert u'\Vert_{L_{1}^{2}}
^{2}\right]\nonumber\\
&\leq\mu\Vert u\Vert_{L_{\theta}^{2}}^{2}\Vert u'\Vert_{L_{1}^{2}}^{2}\left(-1+\sum_{j=2}^{\infty}\frac{\mu^{j-1}}{(j-1)!}\frac{\Vert u\Vert_{L_{\theta}^{2j}}^{2j}}{\Vert u\Vert_{L_{\theta}^{2}}^{2}\Vert
 u'\Vert_{L_{1}^{2}}^{2}}\right).
\end{align}
From inequality (\ref{4.0.1}) (with $\gamma:=2\pi(1+\theta)/3$) and (\ref{4.0.4}) we get
\begin{align*}
&\frac{d}{dt}F(v_{t})\left|_{t=1}\right.\nonumber\\
&\leq\mu\Vert u\Vert_{L_{\theta}^{2}}^{2}\Vert u'\Vert_{L_{1}^{2}}^{2}\left(-1+C_{\frac{2}{3}\pi(1+\theta),2,\theta}\sum_{j=2}^{\infty}\frac{\mu^{j-1}}{(j-1)!}j!\left(\frac{3}{2\pi(1+\theta)}\right)^{j}\right)
\nonumber\\
&=\mu\Vert u\Vert_{L_{\theta}^{2}}^{2}\Vert u'\Vert_{L_{1}^{2}}^{2}\\
&\cdot\left(-1+C_{\frac{2}{3}\pi(1+\theta),2,\theta}\left(\frac{3}{2\pi(1+\theta)}\right)^{2}\mu\sum_{j=2}^{\infty}\mu^{j-2}j\left(\frac{3}{2\pi(1+\theta)}\right)^{j-2}\right)\\
&\leq\mu\Vert u\Vert_{L_{\theta}^{2}}^{2}\Vert u'\Vert_{L_{1}^{2}}^{2}\left(-1+C_{\frac{2}{3}\pi(1+\theta),2,\theta}\left(\frac{3}{2\pi(1+\theta)}\right)^{2}\mu\sum_{j=2}^{\infty}j\left(\frac{1}{2}\right)^{j-2}\right).
\end{align*}
Thus, taking $\mu_{0}:=\frac{1}{a\cdot C_{\frac{2}{3}\pi(1+\theta),2,\theta}}\left(\frac{2\pi(1+\theta)}{3}\right)^{2}$, where $a:=\displaystyle\sum_{j=2}^{\infty}j\left(\frac{1}{2}\right)^{j-2}$, the proof of the
 theorem follows.

\section{Gagliardo-Nirenberg Inequalities}
\label{sec:7}
In this section, we discuss a little bit about the best constant of the Gagliardo-Niremberg inequality, and we will explore some ideas contained in \cite{Agueh,Caffarelli,Wang}.

It is known the interpolation inequality with weights  \begin{eqnarray}\label{1.1.1}
\left\Vert u\right\Vert_{L_{\theta}^{q}(0,\infty)}\leq K(p,q,\alpha,\theta)\left\Vert u'\right\Vert_{L_{\alpha}^{p}(0,\infty)}^{\gamma}\left\Vert u\right\Vert_{L_{\theta}^{p}(0,\infty)}^{1-\gamma},
\end{eqnarray}

where $1<p\leq q< p^{\star}=\frac{p(1+\theta)}{\alpha-(p-1)}$, $\alpha\geq p-1$, $\theta\geq 0$ and $1-\gamma=\frac{p}{q}\cdot\frac{(p^{\star}-q)}{(p^{\star}-p)}$. It is worth noting that when $\alpha=p-1$
 we have $1-\gamma=\frac{p}{q}$.  

Throughout this section we will assume that $\alpha\leq p+\theta$. So, we can computed the optimal $k=K(p,q,\alpha,\theta)$ in  (\ref{1.1.1}) if we determine the explicit solution of the minimization 
problem 
\begin{eqnarray}\label{1.1.2}
\inf\left\{E(u):=\frac{1}{p}\int_{0}^{\infty}\left\vert u'\right\vert^{p}d\lambda_{\alpha}+\frac{1}{p}\int_{0}^{\infty}\left\vert u\right\vert^{p}d\lambda_{\theta}: \left\Vert u\right\Vert_{L_{\theta}^{q}((0,\infty))}
=1\right\}.
\end{eqnarray}

Indeed, first of all, see Lemma \ref{1.7} and Theorem \ref{2} (with $\alpha=m$, and $l=\theta$) for the existence of a minimizer for (\ref{1.1.2}). Now if $u_{\infty}$ is a minimizer of the variational problem
 (\ref{1.1.2}), then
\begin{eqnarray*}
E(u_{\infty})\leq E(u)=\frac{1}{p}\left\Vert u'\right\Vert_{L_{\alpha}^{p}((0,\infty))}^{p}+\frac{1}{p}\left\Vert u\right\Vert_{L_{\theta}^{p}((0,\infty))}^{p}
\end{eqnarray*}
for all $u\in W^{1,p}_{\alpha,\theta}((0,\infty))$ satisfying $\left\Vert u\right\Vert_{L_{\theta}^{q}((0,\infty))}=1$. Thus, \begin{eqnarray*}
E(u_{\infty})\leq\frac{1}{p}\frac{\left\Vert u'\right\Vert_{L_{\alpha}^{p}((0,\infty))}^{p}}{\left\Vert u\right\Vert_{L_{\theta}^{q}((0,\infty))}^{p}}+\frac{1}{p}\frac{\left\Vert u\right\Vert_{L_{\theta}^{p}((0,\infty))}^{p}}
{\left\Vert u\right\Vert_{L_{\theta}^{q}((0,\infty))}^{p}}
\end{eqnarray*} 
for every $0\neq u\in W^{1,p}_{\alpha,\theta}(0,\infty)$. Scaling $u$ as $u_{t}(x)=u(tx)$, we get
\begin{eqnarray*}
E(u_{\infty})\leq t^{p-(\alpha+1)+\frac{p}{q}(1+\theta)}\frac{\left\Vert u'\right\Vert_{L_{\alpha}^{p}((0,\infty))}^{p}}{p\left\Vert u\right\Vert_{L_{\theta}^{q}((0,\infty))}^{p}}+t^{(1+\theta)\left(\frac{p}{q}-1\right)}
\frac{\left\Vert u\right\Vert_{L_{\theta}^{p}((0,\infty))}^{p}}{p\left\Vert u\right\Vert_{L_{\theta}^{q}((0,\infty))}^{p}}.
\end{eqnarray*}
A direct computation proves that the minimum over $t$ is achieved at
\begin{eqnarray*}
t=\left[\frac{(1+\theta)(q-p)}{pq+p(1+\theta)-q(1+\alpha)}\frac{B}{A}\right]^{\frac{1}{p+\theta-\alpha}},
\end{eqnarray*}
where \begin{eqnarray*}
A=\frac{\left\Vert u'\right\Vert_{L_{\alpha}^{p}((0,\infty))}^{p}}{p\left\Vert u\right\Vert_{L_{\theta}^{q}((0,\infty))}^{p}} \, \ \mbox{and} \, \ B=\frac{\left\Vert u\right\Vert_{L_{\alpha}^{p}((0,\infty))}^{p}}{p\left\Vert
 u\right\Vert_{L_{\theta}^{q}((0,\infty))}^{p}}. 
\end{eqnarray*}
Therefore, \begin{eqnarray*}
E(u_{\infty})\leq&\left[\left(\frac{(1+\theta)(q-p)}{qp+p(1+\theta)-q(1+\alpha)}\right)^{1-\gamma}+\left(\frac{(1+\theta)(q-p)}{qp+p(1+\theta)-q(1+\alpha)}\right)^{\gamma}\right]\frac{\left\Vert u'\right
\Vert_{L_{\alpha}^{p}}^{p\gamma}\left\Vert u\right\Vert_{L_{\theta}^{p}}^{p(1-\gamma)}}{p\left\Vert u\right\Vert_{L_{\theta}^{q}}^{p}}
\end{eqnarray*}
and the equality happen when $u=u_{\infty}$.

The next result will be important in the study of attainability in the Trudinger-Moser inequality with weigth when $p=2$.
\begin{proposition}\label{1.1.3} 
If $p=2$, $\alpha=p-1$, and $\theta\geq0$, then the infimum
	\begin{eqnarray*}
	B(2,\theta)^{-1}:=\inf_{0\neq u\in W^{1,2}_{1,\theta}(0,\infty)}\frac{\left\Vert u'\right\Vert_{L_{1}^{2}((0,\infty))}^{2}\cdot\left\Vert u\right\Vert_{L_{\theta}^{2}((0,\infty))}^{2}}{\left\Vert u\right
	\Vert_{L_{\theta}^{4}((0,\infty))}^{4}},	
	\end{eqnarray*}
	is attained by a positive nonincreasing function  in $W^{1,2}_{1,\theta}((0,\infty))$. Moreover, 
	\[B(2,\theta)^{-1}<\pi(1+\theta).\]	
\end{proposition}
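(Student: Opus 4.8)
The plan is to treat the two assertions separately. For the attainment I would follow the scheme indicated for problem (\ref{1.1.2}). The quotient
$$Q(u):=\frac{\|u'\|_{L^2_1}^2\,\|u\|_{L^2_\theta}^2}{\|u\|_{L^4_\theta}^4}$$
is invariant under $u\mapsto cu$ and under the dilation $u\mapsto u(\lambda\,\cdot\,)$ (the weight $x$ on $u'$ makes $\|u'\|_{L^2_1}$ scale-free, while the two $L_\theta$-factors rescale identically). First I would apply the weighted P\'olya--Szeg\H{o} principle, Theorem \ref{2} with $l=\theta$ and $k=(1+\theta)/2$, so that $m=2k-\theta=1$ and $0<k\le l+1$: the half $\mu_\theta$-symmetrization does not increase $\|u'\|_{L^2_1}$ and leaves $\|u\|_{L^2_\theta}$, $\|u\|_{L^4_\theta}$ unchanged, so a minimizing sequence may be taken nonincreasing. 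Normalizing by the two invariances so that $\|u_n'\|_{L^2_1}=\|u_n\|_{L^2_\theta}=1$, the sequence is bounded in $W^{1,2}_{1,\theta}(0,\infty)$, and by Lemma \ref{1.7}(ii) a weak limit $u$ converges strongly in $L^4_\theta$ (here $2<4<p^\star=\infty$ since $\alpha=p-1$). Then $\|u\|_{L^4_\theta}^4=\lim\|u_n\|_{L^4_\theta}^4=B(2,\theta)>0$, so $u\neq 0$, and weak lower semicontinuity forces $Q(u)\le B(2,\theta)^{-1}$; hence $u$ is a minimizer, nonincreasing, with positivity following from the maximum principle applied to its Euler--Lagrange equation.

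For the bound I would use a power change of variable that removes the mismatch between the weight $x$ on the gradient and $x^{\theta}$ on $u$. Setting $y=x^{(1+\theta)/2}$ and $v(y)=u(x)$, a direct computation gives
$$\int_0^\infty |u'|^2 x\,dx=\tfrac{1+\theta}{2}\int_0^\infty |v'|^2 y\,dy,\qquad \int_0^\infty |u|^q x^{\theta}\,dx=\tfrac{2}{1+\theta}\int_0^\infty |v|^q y\,dy\quad(q=2,4).$$
After cancelling the $\omega_\theta$ factors and using $\omega_1=2\pi$, this yields
$$Q(u)=\pi(1+\theta)\,R(v),\qquad R(v):=\frac{\big(\int_0^\infty |v'|^2 y\,dy\big)\big(\int_0^\infty |v|^2 y\,dy\big)}{\int_0^\infty |v|^4 y\,dy}.$$
Since the substitution is a bijection of $W^{1,2}_{1,\theta}(0,\infty)$ onto $W^{1,2}_{1,1}(0,\infty)$, it gives $B(2,\theta)^{-1}=\pi(1+\theta)\,\inf_v R(v)$, so the claim reduces to $\inf_v R(v)<1$. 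Note that $R$ is, up to the universal factor $2\pi$, the radial Gagliardo--Nirenberg quotient $\|\nabla\psi\|_{L^2}^2\|\psi\|_{L^2}^2/\|\psi\|_{L^4}^4$ on $\mathbb{R}^2$, so $\inf_v R(v)=B_2^{-1}/(2\pi)$ in the notation of Theorem \ref{P.0.1.0.1}.

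The decisive step is the \emph{strict} inequality $\inf_v R(v)<1$. The Gaussian $g(y)=e^{-y^2/2}$ gives, by elementary $\Gamma$-integrals, $\int g'^2 y=\int g^2 y=\tfrac12$ and $\int g^4 y=\tfrac14$, hence $R(g)=1$ exactly; thus the reduction lands precisely on the borderline and genuine improvement must be produced. Here I would argue that $g$ is \emph{not} optimal: the Euler--Lagrange equation of $R$ reads $-\tfrac1y(yv')'+\lambda v=\kappa v^3$, and for $g$ one computes $-\tfrac1y(yg')'=(2-y^2)g$, which cannot equal $(\lambda+\kappa e^{-y^2})g$ for any constants $\lambda,\kappa$ (a polynomial in $y$ against an exponential). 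Therefore $dR(g)\neq 0$, and choosing $\phi$ with $dR(g)[\phi]<0$ gives $R(g+\varepsilon\phi)<1$ for small $\varepsilon>0$, whence $\inf_v R(v)<1$ and $B(2,\theta)^{-1}<\pi(1+\theta)$. I expect this first-variation argument (equivalently, the classical fact $B_2^{-1}<2\pi$, i.e. that the ground-state mass is below $4\pi$) to be the main obstacle, since no elementary competitor seems to beat the Gaussian by a transparent explicit margin, and one must instead exploit that the Gaussian fails the optimality equation.
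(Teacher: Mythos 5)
Your proof is correct, and its central mechanism is the same as the paper's: exhibit a function sitting \emph{exactly} at the level $\pi(1+\theta)$ and show that it fails the Euler--Lagrange equation, so the (attained) infimum must lie strictly below. The differences are in the execution. For attainment, the paper merely points to Lemma \ref{1.7} and Theorem \ref{2}; your normalization via the two invariances and the compactness argument fill in exactly what is intended, with the same parameter choice ($l=\theta$, $m=1$) in the P\'olya--Szeg\"o step. For the strict bound, the paper stays in the original variables and takes $u(x)=(1+x^{1+\theta})^{-1}$, computing $\Vert u\Vert_{L_{\theta}^{4}}^{4}=\omega_{\theta}/(3(1+\theta))$, $\Vert u\Vert_{L_{\theta}^{2}}^{2}=\omega_{\theta}/(1+\theta)$, $\Vert u'\Vert_{L_{1}^{2}}^{2}=\omega_{1}(1+\theta)/6$, hence $B(u)^{-1}=\pi(1+\theta)$, and then argues by contradiction: if the infimum equalled $\pi(1+\theta)$, this $u$ would be a minimizer and hence would solve (\ref{1.1.5}) for some $\lambda$, which no function of the form $a_{1}(1+a_{2}x^{a_{3}})^{a_{4}}$ does. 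You instead transplant the problem to the two-dimensional radial picture via $y=x^{(1+\theta)/2}$ and run a first-variation descent at the Gaussian; amusingly, under your substitution the paper's test function becomes $1/(1+y^{2})$, which also gives exactly $R=1$, so the two choices of borderline competitor are interchangeable. The trade-off: the paper's route leans on part one of the proposition (attainment plus the Lagrange-multiplier characterization of minimizers), whereas your descent argument is self-contained --- it produces a genuine competitor with $R<1$ and would survive even if compactness failed; on the other hand, it requires justifying differentiability of the quotient at $g$ (routine, since the denominator is bounded away from zero) and the change-of-variables bookkeeping that the paper's direct choice of test function avoids.
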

\begin{proof}
	The first part has been discuss at the beginning of this section. Then, we focus in the second part.
	Set \begin{align*} B(u)^{-1}:=\frac{\left\Vert u'\right\Vert_{L_{1}^{2}((0,\infty))}^{2}\cdot\left\Vert u\right\Vert_{L_{\theta}^{2}((0,\infty))}^{2}}{\left\Vert u\right\Vert_{L_{\theta}^{4}((0,\infty))}^{4}}.
	\end{align*}
Note that is sufficient to exhibits a function $u\in W^{1,2}_{1,\theta}((0,\infty))$ such that\linebreak $B(u)^{-1}=\pi(1+\theta)$ and it is not solution of 
\begin{eqnarray}\label{1.1.5}
-(u'x)'\omega_{1}+ux^{\theta}\omega_{\theta}-\lambda u^{3}x^{\theta}\omega_{\theta}=0,
\end{eqnarray}
for all $\lambda>0$. 
	
On the one hand, through a direct calculation we can see that for every positive function $v$ in $ W^{1,2}_{1,\theta}((0,\infty))$ of the form \begin{eqnarray*}
v(x)=a_{1}(1+a_{2}x^{a_{3}})^{a_{4}},
\end{eqnarray*}
where $a_{1}, a_{2}, a_{3}, a_{4}$ are real numbers, it is not a solution for (\ref{1.1.5}).
On the other hand, choosing \begin{eqnarray*}
u(x)=\frac{1}{1+x^{1+\theta}},
\end{eqnarray*}
	then \begin{eqnarray*}
&\left\Vert u\right\Vert_{L_{\theta}^{4}((0,\infty))}^{4}=\frac{\omega_{\theta}}{3(1+\theta)}\\
&\left\Vert u\right\Vert_{L_{\theta}^{2}((0,\infty))}^{2}=\frac{\omega_{\theta}}{(1+\theta)} \\
& \left\Vert u'\right\Vert_{L_{1}^{2}((0,\infty))}^{2}=\frac{\omega_{1}(1+\theta)}{6}
\end{eqnarray*}
	Therefore, $B(u)^{-1}=\pi(1+\theta)$, and then the proposition follows.
\end{proof}

\end{document}